\newtheorem{theorem}[equation]{Theorem}
\newtheorem{proposition}[equation]{Proposition}
\newtheorem{lemma}[equation]{Lemma}
\newtheorem{corollary}[equation]{Corollary}
\newtheorem{conjecture}[equation]{Conjecture}
\newcounter{com}
\newtheorem*{A'}{Corollary A$'$}
\newtheorem*{C'}{Theorem C'}
\newtheorem*{C''}{Theorem B$''$ (Holt's Conjecture)}
\newcommand{\beql}[1]{\begin{equation}\label{#1}}
\newcommand{\eeq} {\end{equation}}
\font\Aaa=msam10
\def\qed{\hbox{~~\Aaa\char'003}}
\font\Bbb=msbm10
\def\semi{\hbox{\Bbb o}}
\def\Z{\hbox{\Bbb Z}}
\def\F{\hbox{\Bbb F}}
\def\CC{\hbox{\Bbb C}}
\def\hom{\hbox{\rm{Hom}}}
\def\hr{\hat{r}}
\numberwithin{equation}{section}
\let\define=\def
\def
\let\ljunk=\{
\let\rjunk=\}
        \def\PSL{{\rm PSL}}
        \def\SL{{\rm SL}}
        \def\AGL{{\rm AGL}}
        \font\Aaa=msam10
\DeclareRobustCommand{\SkipTocEntry}[4]{}
\begin{document}

\title[Proficient Groups]
{Remarks on Proficient groups }

\thanks{The authors were partially supported by
        NSF grants DMS 0653873,
          DMS 0242983, DMS 0600244  and   DMS~0354731.}

       \author{R. M. Guralnick}
       \address{Department of Mathematics, University of Southern California,
       Los Angeles, CA 90089-2532 USA}
       \email{guralnic@usc.edu}

       \author{W. M. Kantor}
       \address{Department of Mathematics, University of Oregon,
       Eugene, OR 97403 USA}
       \email{kantor@math.uoregon.edu}

    \author{M. Kassabov}
       \address{Department of Mathematics, Cornell University,
Ithaca, NY 14853-4201  USA}
       \email{kassabov@math.cornell.edu}

    \author{A. Lubotzky}
       \address{Department of Mathematics, Hebrew University, Givat Ram,
Jerusalem 91904 Israel}
       \email{alexlub@math.huji.ac.il}

\subjclass[2000]{Primary 20D06, 20F05 Secondary 20J06}

{\abstract   
If a finite group  $G$ has a presentation with $d$ generators and
$r$ relations, it is well-known that $r - d $ is  at  least the rank of
the Schur multiplier of $G$; a presentation is called {\em efficient}
if equality holds.    There is an analogous definition for
{\em proficient}  profinite
presentations.   
  We show that  many perfect groups
have proficient presentations.  Moreover, we prove
that infinitely many alternating groups, symmetric groups and their double
covers  have proficient presentations. }

\maketitle



\centerline{Dedicated to the memory of Karl Gruenberg}

\section{Introduction}
\label{intro}

For a group $H$, we denote by
$d(H)$ the minimal number of generators of $H$.
If $N\lhd H$, we denote by $d_H(N)$ the
minimal number of generators of $N$ as
a normal subgroup of $H$.

A finite group $G$ has a \emph{presentation with $d$
generators and $r$ relations} if there is an
exact sequence
\begin{equation} \label{basic discrete}
1\rightarrow R  \rightarrow F \rightarrow G\rightarrow 1,
\end{equation}
where  $F$  is a free group on $d$ generators and
$d_F(R)=r$.
Similarly, $G$ has a \emph{profinite   presentation with
$d$ generators and $r$ relations} if there is  an exact sequence
\begin{equation} \label{basic profinite}
1\rightarrow \widehat{R}  \rightarrow \widehat{F} \rightarrow G\rightarrow 1,
\end{equation}
where $\widehat{F}$ is the free profinite group on $d$
generators and $d_{\widehat F}(\widehat{R})=r$;
here $d_{\widehat F}(\widehat{R})$ is the minimal number
of normal generators of $\widehat{R}$ in 
the
topological sense, i.e.,
as a normal closed subgroup of $\widehat{F}$.

It is known  (\ref{prof ineq}) that, if $G$ has a (profinite)
presentation with $d$ generators and $r$ relations, then
$ r -d  \geq d(M(G))$,
where $M(G)$ is the Schur multiplier of $G$.
A presentation (resp.~profinite presentation) is
called {\em efficient} (resp. {\em proficient})  if  $ r -d  = d(M(G))$;  and
$G$ is called {\em efficient} (resp. {\em proficient}) if 
it has an efficient (resp. proficient) presentation.
It is also known  
 that if a finite group $G$ has a
proficient presentation, then 
it has a proficient presentation with only $d(G)$ generators
(cf. Proposition} ~\ref{lub0.1}).
The analogous result in the category of discrete groups
is an old open problem (cf.  \cite[p.~2]{relmod}).

The notion of efficient presentations is relatively old,
but the list of  perfect groups  or $2$-generated groups 
known to have such presentations is  very  limited.
   The only infinite family of simple groups presently known  to 
 have efficient presentations consists of the groups  $\PSL(2,p)$ with $p > 3$ prime \cite{Sun};
 $\SL(2,p),$  $ p > 3$, also has an efficient presentation \cite{CR2}.
 In addition,  $\PSL(2,p) \times \PSL(2,p)$ has an efficient presentation for each prime $p>3$
 \cite{pp},  as do 
 $\SL(2, p) \times \PSL(2, p), $   $ \PSL(2, p)^3$,   
$ \PSL(2, p)\times A_6 $,   $\PSL(2, 5)^4$
 and  most  ``small''  simple groups 
 \cite{Ro,CMRW,CR3, CRKMW,CHRR,CHRR2}.  Also $\SL(2,\mathbb{Z}/m)$ is efficient
for any odd   integer $m$ \cite[p.~19]{CR2} (compare \cite[p.~70]{CRW88}), and hence
so  is any quotient by a subgroup of
its center.    On the other hand, Harlander \cite[Cor. 5. 4]{Ha} 
has shown that, for any finite group
$G$,  $G \times P$ is efficient for a sufficiently large elementary abelian
$p$-group $P$ (in particular, every perfect group is the derived group of an efficient group).
Note that these groups have a very large number of generators and a much larger
number of relations.  See also \cite{El}.

The notion of proficient presentations was introduced by
Gruenberg and Kov\'acs in \cite{GrK}.
An efficient presentation gives rise to a proficient one,
so all efficient groups are proficient. The
present note is an offshoot of our result in \cite{GKKL3} 
that, for primes $ p \equiv 11 \mod 12,  A_{p+2 } \times T$ has an efficient presentation,
where $T$ is the subgroup of index $2$  in $AGL(2,p)$. 
Combined with the cohomological methods of \cite{GKKL2} we will provide further examples
of proficient groups which are perfect (or  very close  to   perfect).   Indeed,   for any $d > 1$ we 
provide infinitely many examples of perfect groups  $G$   such
that $G$ has a profinite presentation with $d=d(G)$ generators and $d$ relations
(see Corollary \ref{0 prodeficiency}). 
By contrast,   there appear to be no examples known of finite groups that
have presentations with $d(G)$ generators and $d(G)$ relations     when $d(G)> 3$.
By the Golod-Shavarevich Theorem \cite{serre}, this cannot occur for finite nilpotent groups.

For a finite group $G$ we denote by $r(G)$ (resp. $\hat r(G)$)
the minimal number of relations needed to define $G$, i.e.,
the minimum of 
$d_F(R)$ over all presentations (\ref{basic discrete}) of $G$
(or of  $d_{\hat F}(R)$ in (\ref{basic profinite})). 
Clearly,
\begin{equation} \label{inequality}
\hat r(G) \leq r(G).
\end{equation}
It is a central open problem in the
area of presentations of finite groups
whether (\ref{inequality})  is always an equality (cf.  \cite[p.~2]{relmod}).
Indeed, Serre \cite[p.~34]{serre} stated  that for 30 years he had seen 
 ``no reason  $\dots$
why this should always  be an equality''.
As a special case, in view of the results in this paper  it is 
especially interesting  to 
 ask whether there are proficient finite groups
that are not efficient. 

We recall  a cohomological interpretation of $\hat r(G)$  (see \cite{GrK, lub34}).
If $M$ is a finite-dimensional $kG$-module for a field $k$, define
$$
\nu_2(M):=\left\lceil\frac{\dim H^2(G,M) - \dim H^1(G,M) + \dim H^0(G,M)}{\dim M}\right\rceil.
$$
Then 
\begin{equation} \label{rhat} 
\hr(G) -  d(G)  =   \max_{p,M}  \nu_2(M) -1 ,
\end{equation} 
where  $p$ runs
over all primes   and $M$ runs over all irreducible $\F_pG$-modules.    
It is well-known (see Lemma \ref{schur}) that the rank of the 
Sylow $p$-subgroup  of $M(G)$ is 
$\dim H^2(G,\F_p) - \dim H^1(G,\F_p)= \nu_2(\F_p) - 1$,
where $\F_p$ is viewed as the trivial $\F_pG$-module. 
Hence,
\begin{equation} \label{nuschur}
d(M(G))= \max_p \nu_2(\F_p) - 1.
\end{equation} 
Thus (\ref{rhat}) and (\ref{nuschur}) imply that
\begin{equation} \label{prof ineq}
\hat{r}(G) \geq d(G) + d(M(G)) .
\end{equation}
By definition,  a group $G$ is proficient if and only if this inequality 
 is  an equality. By (\ref{rhat}),  this happens
if and only if
\begin{equation} \label{prof ineq2}
 \max_{p,M} \nu_2(M) \le  \max_p \nu_2(\F_p) = 1 +
 \max_p \Big(\dim H^2(G,\F_p)-\dim H^1(G, \F_p)  \Big), 
\end{equation} 
 where $p$ runs over all primes and $M$  over all nontrivial irreducible
$\F_pG$-modules.
That is, $G$ \emph{is proficient if and only if 
$ \max_{p,M} \nu_2(M)$
is attained when $M$ is the trivial module $\F_p$ for some prime $p$.}
Thus, for proficiency but not efficiency, we have a cohomological
interpretation    that is crucial in this paper.

 While many finite groups are proficient  
(cf. Proposition \ref{nilpotent} for all finite  nilpotent groups),  
not all finite solvable groups are
(see \cite{swan}  and  \S\ref{non}).
 
In \cite[(2.6)]{GrK}  it is shown that,  if $G$ is any finite group and $H$ is not
a superperfect group (i.e., for some $p$ either
$H^1(H,\F_p)$ or $H^2(H,\F_p)$ is nonzero), then $G \times H^{\times k}$
is proficient for all sufficiently large $k$. In particular, every finite group 
$G$ is a direct factor of a
proficient group (by \cite{Ha} in fact every finite group is a direct summand
of a finite efficient group).   In \S7,  we will see that every finite
group is also a direct factor of  a non-proficient finite group (and so also
a non-efficient one). 

A  method similar to that in \cite{GrK}, 
 combined with our quantitative results from  \cite{GKKL2},
yields:
\begin{theorem} \label{theorem 18}
\begin{enumerate}
\item[\rm(a)]  If $G$ is a direct product of $2$ or more simple 
alternating groups$, $  then $G$ is proficient.
\item[\rm(b)]  If $G$ is a direct product of finite quasisimple groups and if 
$d(M(G)) \ge 16,$  then $G$ is proficient.
\item[\rm(c)]  If $G$ is a direct product of quasisimple groups 
each of which is a covering group of  $\PSL(2,q_i)$ for  some prime power $q_i>3,$ then 
$G$ is proficient.
\end{enumerate}
\end{theorem}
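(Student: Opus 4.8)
The plan is to exploit the cohomological characterization of proficiency in~\eqn{prof ineq2}: $G$ is proficient precisely when $\max_{p,M}\nu_2(M)$ is attained at some trivial module $\F_p$. Since all three parts concern direct products, the first task is to understand how $\nu_2$ for a nontrivial irreducible module behaves under direct products. If $G=\prod_i G_i$ and $M$ is an irreducible $\F_pG$-module, then $M$ is (up to the usual twist) an outer tensor product $\boxtimes_i M_i$ of irreducibles, and at most we expect the relevant cohomology to be controlled by the factors on which $M$ is nontrivial. First I would record a K\"unneth-type bound showing that, for a nontrivial irreducible $M$ over a product, $\dim H^2(G,M)$ grows slowly relative to $\dim M$, because a nontrivial irreducible of a product of quasisimple groups already has large dimension (each nontrivial factor contributes a dimension $\ge 2$, and for most quasisimple groups much larger). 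The point is that $\nu_2(M)$ has numerator that is essentially $\dim H^2(G,M)$ plus lower-order corrections, while the denominator $\dim M$ is a product over the nontrivially-acted factors, so for $M$ with enough nontrivial factors the ceiling $\nu_2(M)$ collapses to a small constant.

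The heart of the argument is therefore a quantitative comparison between $\dim H^2(G,M)$ and $\dim M$ for nontrivial irreducibles, and this is exactly where I would invoke the quantitative cohomological estimates from~\cite{GKKL2}. Concretely, for quasisimple groups those results bound $\dim H^1$ and $\dim H^2$ of a group with coefficients in an irreducible module by a small multiple of the module dimension (indeed with an absolute constant). Combining such a bound over the factors via the K\"unneth formula, I would show that for any nontrivial irreducible $M$ one has $\nu_2(M)\le 1$, or at worst $\nu_2(M)\le 2$, once a suitable largeness hypothesis holds. For part~(a), alternating groups have very small Schur multipliers yet $H^1$ and $H^2$ in nontrivial modules are controlled well enough by~\cite{GKKL2} that two or more factors force the numerator-to-dimension ratio below the trivial-module value; for part~(c) the covering groups of $\PSL(2,q)$ admit the sharp module-by-module cohomology bounds that make this estimate clean. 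For part~(b), the explicit threshold $d(M(G))\ge 16$ is what guarantees that the trivial-module quantity $1+\max_p(\dim H^2(G,\F_p)-\dim H^1(G,\F_p))$ is large enough to dominate every $\nu_2(M)$; here I would compute $d(M(G))=\sum_i d(M(G_i))$ for a direct product of quasisimple groups and check that once this sum is at least $16$, it exceeds the uniform upper bound we obtained for nontrivial modules.

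Assembling the proof, I would first reduce to the case where $M$ is nontrivial on at least one factor (trivial $M$ contributes to the right-hand side of~\eqn{prof ineq2} and is harmless), then split nontrivial $M$ according to how many factors it is nontrivial on. Modules nontrivial on a single factor $G_i$ reduce to the cohomology of $G_i$ alone (with the other factors acting trivially and contributing only via $H^0$ and $H^1$ of the trivial part), and the quasisimple cohomology bounds handle these directly. Modules nontrivial on several factors have such large $\dim M$ that $\nu_2(M)$ is forced to be small by the ceiling. In each case I would verify $\nu_2(M)\le \max_p\nu_2(\F_p)$, which is precisely proficiency.

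The main obstacle I anticipate is the single-nontrivial-factor case, especially for part~(b): here $\dim M$ can be as small as the smallest nontrivial irreducible dimension of one quasisimple factor, so the ceiling does not help, and one must argue that $\dim H^2(G_i,M_i)-\dim H^1(G_i,M_i)+\dim H^0$ is genuinely dominated by the trivial-module quantity summed over all factors. This is where the hypothesis $d(M(G))\ge 16$ is doing real work and where the quantitative bounds of~\cite{GKKL2} must be sharp enough; getting the constant $16$ rather than something larger is the delicate part, and I would expect to need the precise per-module $H^2$ estimates rather than a crude uniform bound.
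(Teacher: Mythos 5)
Your outline follows the same skeleton as the paper's proof --- the criterion (\ref{prof ineq2}), the K\"unneth decomposition of an irreducible $M=\bigotimes_i M_i$ over a splitting field according to how many factors act nontrivially, and quantitative input from \cite{GKKL2}; the paper packages exactly this computation as Lemma \ref{3.3} and then quotes the presentation theorems of \cite{GKKL2} (2 generators and 18, 4, 2 relations for quasisimple groups, alternating groups, and $\SL(2,q)$ respectively). However, two of your steps would fail as written. The mechanism you give for modules nontrivial on several factors --- that $\dim M$ is so large that the ceiling forces $\nu_2(M)$ to be small --- is not a valid argument: $\nu_2$ is, up to the ceiling, a ratio, and from a bound $\dim H^1(G_i,M_i)\le C\dim M_i$ with $C>1$ the K\"unneth formula gives $\dim H^2(G,M)=\dim H^1(G_1,M_1)\cdot\dim H^1(G_2,M_2)\le C^2\dim M$ when exactly two factors act nontrivially, and the bound $C^2$ on the ratio does not improve as $\dim M$ grows. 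What is actually needed, and what the paper uses, is that each factor is $2$-generated, so $\dim H^1(G_i,M_i)\le\dim M_i$ for nontrivial irreducible $M_i$ (one can even use $\dim H^1\le(1/2)\dim M$ from \cite{GH}); then $\dim H^2(G,M)\le\dim M$, and $H^2(G,M)=0$ when three or more factors act nontrivially, so $\nu_2(M)\le 1$ with no slack. Your hedge ``or at worst $\nu_2(M)\le 2$'' is fatal for part (c): a product of groups $\SL(2,q_i)$ can have trivial Schur multiplier, in which case $\max_p\nu_2(\F_p)=1$ and proficiency demands exactly $\nu_2(M)\le 1$ for every nontrivial irreducible $M$.

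In part (b), the delicate case you correctly single out (one nontrivial factor, where K\"unneth gives $\nu_2(M)=\nu_2(M_1)$ exactly, since the remaining perfect factors contribute nothing to $H^1$ or $H^2$ with trivial coefficients) is settled by the \emph{presentation} form of \cite[Theorem B]{GKKL2}: every finite quasisimple group has a profinite presentation with $2$ generators and $18$ relations, which by (\ref{rhat}) gives $\nu_2(M_1)\le 17$ for every irreducible module; this matches the hypothesis because $\max_p\nu_2(\F_p)=1+d(M(G))\ge 17$. The cohomological bound $\dim H^2(\tilde S,M)\le 17.5\dim M$ that you propose to rely on is not sufficient by itself: when $H^1(G_1,M_1)=0$ it yields only $\nu_2(M_1)\le 18$, which would force the hypothesis $d(M(G))\ge 17$ rather than $16$. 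Finally, your identity $d(M(G))=\sum_i d(M(G_i))$ is false in general, since ranks at different primes do not add ($d(\Z/2\times\Z/3)=1$); for perfect factors the correct statement is $d(M(G))=\max_p\sum_i{\rm rank}_p\,M(G_i)$. This slip is harmless here because the hypothesis in (b) is placed on $d(M(G))$ directly, but it matters in part (a), where the right observation is that $t\ge 2$ simple alternating factors give $2$-rank at least $2$, so that the trivial-module value $1+d(M(G))\ge 3$ dominates the bound $\nu_2(M_1)\le 3$ coming from the $4$-relation presentations of \cite[Theorem D]{GKKL2}.
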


A basic related question  is the following:

\begin{conjecture} \label{conjA}
Every finite simple group $S$  and 
its universal cover $\tilde{S}$ are proficient.
\end{conjecture}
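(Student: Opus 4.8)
The plan is to reduce the statement, via the cohomological criterion (\ref{prof ineq2}), to a single sharp bound on second cohomology. Both $S$ and its universal cover $\tilde S$ are quasisimple, so fix a quasisimple group $Q$ and put $k=d(M(Q))$. By (\ref{nuschur}), $\max_p\nu_2(\F_p)=1+k$; here $k=d(M(S))$ when $Q=S$, while $k=0$ when $Q=\tilde S$ since $M(\tilde S)=1$. For a \emph{nontrivial} irreducible $\F_pQ$-module $M$ one has $H^0(Q,M)=0$, so
$$
\nu_2(M)=\left\lceil\frac{\dim H^2(Q,M)-\dim H^1(Q,M)}{\dim M}\right\rceil .
$$
Since $\lceil x\rceil\le n$ for an integer $n$ precisely when $x\le n$, the criterion (\ref{prof ineq2}) shows that $Q$ is proficient \emph{if and only if}
$$
\dim H^2(Q,M)-\dim H^1(Q,M)\le (1+k)\,\dim M
$$
for every nontrivial irreducible $M$ over every prime field. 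Thus the conjecture is equivalent to this inequality, with $k=d(M(S))$ for $S$ and with the sharper $k=0$ for $\tilde S$.

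So I would try to establish this inequality family by family through the classification of finite simple groups. The crucial point is that the Schur multiplier of a simple group has rank at most $2$, so $k$ is always small and the large-multiplier mechanism behind Theorem \ref{theorem 18}(b) is unavailable: the absolute constant $C$ with $\dim H^2(Q,M)\le C\,\dim M$ supplied by \cite{GKKL2} is far too weak, and one is forced into the sharp regime $\nu_2(M)\le 1+k$, indeed $\nu_2(M)\le 1$ for the superperfect cover $\tilde S$. Accordingly I would treat: the covers of $\PSL(2,q)$, already settled by Theorem \ref{theorem 18}(c); the alternating and symmetric groups and their double covers, using the explicit $H^1$ and $H^2$ estimates underlying Theorem \ref{theorem 18}(a); the sporadic groups and the bounded-rank groups of Lie type, checked against the known tables of low-degree cohomology; and the remaining groups of Lie type, where one separates the defining-characteristic case (via the rational cohomology of the ambient algebraic group together with Frobenius-kernel and Lyndon--Hochschild--Serre arguments) from the cross-characteristic case (via uniform bounds on $\dim H^2$).

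The hard part will be precisely the sharp constant. What is required is the bound $\dim H^2(Q,M)\le \dim M+\dim H^1(Q,M)$ for \emph{all} nontrivial irreducible modules of $Q$ when the multiplier is small, and with no slack whatsoever for the superperfect cover $\tilde S$. This is a sharp instance of the general expectation that the second cohomology of a finite simple group grows no faster than $\dim M$, and it is exactly where the present quantitative results stop short, since they bound $\dim H^2/\dim M$ only by an absolute constant strictly larger than $1$. I expect the sporadic and bounded-rank cases to be routine from the literature or by direct computation, and the alternating case to be within reach of the methods already in play; the genuine obstacle should be the defining-characteristic cohomology of the higher-rank groups of Lie type, where controlling $\dim H^2(Q,M)$ simultaneously for all $M$ with the sharp constant is not currently known.
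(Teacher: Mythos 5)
You have not proved the statement, and you could not have been expected to: it is stated in the paper as a \emph{conjecture}, with no proof offered, only an equivalent reformulation and partial evidence. Your reduction step is correct and is exactly the paper's own reformulation: using (\ref{prof ineq2}), (\ref{nuschur}) and the vanishing of $H^0$ on nontrivial irreducibles, proficiency of a quasisimple group $Q$ is equivalent to $\dim H^2(Q,M)-\dim H^1(Q,M)\le\bigl(1+d(M(Q))\bigr)\dim M$ for every nontrivial irreducible $\F_pQ$-module $M$; for the superperfect cover $\tilde S$ (where $d(M(\tilde S))=0$) this is precisely Conjecture~\ref{conjB}, which the paper states is equivalent to Conjecture~\ref{conjA}. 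The only stylistic difference is that the paper disposes of $S$ by the remark that proficiency of $\tilde S$ implies proficiency of $S$ (killing central generators), rather than by running the criterion on $S$ with $k=d(M(S))$; both are valid.

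The gap is the one you concede yourself: the sharp bound with constant $1$ (in place of the $17.5$ of \cite{GKKL2}) is not known for all nontrivial irreducible modules of $\tilde S$, and your classification-based program does not supply it, so the proposal is a research plan rather than a proof. One calibration, though: the paper's actual evidence is considerably thinner than your plan assumes. Beyond covers of $\PSL(2,q)$ and $\Sz(2^{2k+1})$, and beyond $\PSL(3,q)$, $\PSL(4,q)$ for special $q$ (where a nontrivial multiplier gives slack $2\dim M$), the alternating/symmetric results cover only $A_{p+2}$, $S_{p+2}$ and their double covers for primes $p\equiv 3\bmod 4$ (resp.\ $p\equiv 2\bmod 3$ for part of the symmetric case), and they rest on quite special inputs: Carmichael-type discrete presentations realizing these groups as quotients of amalgams of odd-order subgroups by one relator (Lemma~\ref{amalgam1}), the vanishing $H^2(X,V)=0$ for such amalgams in characteristic $2$ (Corollary~\ref{H2=0}), and $2$-homogeneity of $\AGL(1,p)$-type subgroups, which is where the congruence conditions on $p$ enter. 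So even ``the alternating case'' in full generality is open in the paper, and your expectation that it is ``within reach of the methods already in play'' overstates what those methods deliver; the paper itself says that lowering $17.5$ to $1$ in general ``would need new ideas.''
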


Here it  is clear that,  if $\tilde{S}$ is efficient or proficient,
then so is $S$ (a presentation for $S =\tilde{S}/Z$ is obtained
by taking one for $\tilde{S}$ and killing generators for
$Z \cong M(S)$ -- indeed this observation is  also obvious for finite
perfect groups).

We have already noted that $\tilde{S}$ is proficient if and only if it has a
profinite presentation with $2$ generators and $2$ relations
(cf.~Proposition~\ref{lub0.1}).
Wilson \cite{Wil}  conjectured that $\tilde{S}$ even has such  a
discrete presentation, so his conjecture implies the previous one. 

As
$H^1(\tilde S, \F_p) = H^2(\tilde S, \F_p)=0$
for all $p$, (\ref{prof ineq2})   and  (\ref{rhat}) imply that  Conjecture \ref{conjA}  
is equivalent to:

\begin{conjecture}\label{conjB}
For every finite simple group $S,$ every prime $p$ and every
nontrivial irreducible $\F_p{\tilde S}$-module $M,$
$$
\dim H^2(\tilde S,M) - \dim H^1(\tilde S,M) \leq \dim M.
$$
\end{conjecture}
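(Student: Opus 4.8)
The plan is to prove the conjecture by reduction to the classification of finite simple groups (CFSG), handling the families separately and in each case bounding the single quantity that matters. By \eqn{rhat} and \eqn{prof ineq2}, and since $\tilde S$ is perfect so that $H^0(\tilde S,M)=M^{\tilde S}=0$ for every nontrivial irreducible $M$, the asserted inequality is exactly $\nu_2(M)\le 1$; equivalently (Conjecture \ref{conjA}) it says that $\tilde S$ is proficient, i.e.\ admits a profinite presentation with $d(\tilde S)$ generators and $d(\tilde S)$ relations. First I would note that the trivial-module contribution is already settled, since $H^1(\tilde S,\F_p)=H^2(\tilde S,\F_p)=0$ for all $p$; the whole content is therefore the uniform bound $\dim H^2(\tilde S,M)-\dim H^1(\tilde S,M)\le \dim M$ over the nontrivial irreducibles, which I would establish case by case: alternating groups and their covers, groups of Lie type (split into cross-characteristic and defining-characteristic modules), and the finitely many sporadic groups.

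For the sporadic groups the bound is a finite verification from the known modular character tables and cohomology data. For the alternating groups $A_n$ and their double covers $2.A_n$ I would argue inductively along the chain $A_{n-1}<A_n$, using the Lyndon--Hochschild--Serre and Shapiro machinery to relate $H^i(A_n,M)$ to the cohomology of $A_{n-1}$ on restricted and induced modules, in the spirit of the Guralnick--Tiep analysis of first cohomology, combined with the representation theory of Specht modules. The encouraging point is that Theorem \ref{theorem 18}(a) already gives that every direct product of two or more simple alternating groups is proficient; the single-factor statement is the remaining step and is governed by the same $\nu_2$-estimates.

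The main obstacle is the groups of Lie type $S=G(q)$ with $q$ a prime power. In cross-characteristic (the module characteristic $p$ does not divide $q$) the irreducible $M$ have dimension growing with $q$ while their cohomology stays tightly controlled, so Guralnick--Tiep-type bounds together with the quantitative generation and relation estimates ($\cost$, $\Rel$) of the cited GKKL machinery give the inequality with room to spare. The genuinely hard case is defining characteristic, $p\mid q$. There I would pass from the finite group to the ambient simple algebraic group $\mathbf G$: each irreducible $\F_p G(q)$-module is a Steinberg twist of restricted highest-weight modules $L(\lambda)$, and the Cline--Parshall--Scott theory of generic cohomology together with the Frobenius-kernel spectral sequences identifies $H^i(G(q),L(\lambda))$ with rational cohomology of $\mathbf G$ once $q$ is large, reducing the problem to uniform bounds on $H^1$ and $H^2$ of the algebraic group.

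The crux, and the reason this remains a conjecture, is twofold. First, the generic-cohomology comparison degrades for \emph{small} $q$ --- precisely the range in which second cohomology can be anomalously large --- so the small-field, small-rank cases must be handled directly and are where any bound comes closest to being tight. Second, and more fundamentally, the target constant is the sharp value $1$: the available estimates yield $\dim H^2(\tilde S,M)\le C\dim M$ with an absolute constant $C>1$, which proves bounded presentations but not proficiency. Reaching $\dim H^2-\dim H^1\le\dim M$ requires genuinely exploiting the \emph{difference} $H^2-H^1$ rather than bounding $H^2$ alone, that is, capturing the cancellation visible in the relation module of a minimal presentation. I would attempt this by constructing, family by family, explicit profinite presentations of $\tilde S$ on $d(\tilde S)$ generators with exactly $d(\tilde S)$ relations --- which by the equivalence above is the same statement --- using Steinberg-type generators and Curtis--Steinberg--Tits relations in the Lie type case; closing the final gap between the constant $O(1)$ and the sharp value $1$ uniformly over every module and every small field is the step I expect to resist a single uniform argument.
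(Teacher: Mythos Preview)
The statement you are addressing is labeled a \emph{conjecture} in the paper, and the paper does not prove it; indeed the paragraph immediately following Conjecture~\ref{conjB} (and the stronger Conjecture~\ref{conjB+}) explains that the best known general bound is $\dim H^2(\tilde S,M)\le 17.5\,\dim M$, and that ``decreasing $17.5$ to $1$ in general would need new ideas.'' So there is no proof in the paper to compare your proposal against; the paper only establishes special cases (e.g.\ $A_{p+2}$ with $p\equiv 3\bmod 4$, $\SL(2,q)$, ${\rm Sz}(q)$).

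Your proposal is not a proof either, and to your credit you say so: the final paragraph concedes that closing the gap from an absolute constant $C>1$ down to $1$ ``is the step I expect to resist a single uniform argument.'' That concession is accurate and is exactly why the statement remains open. A few specific points where your outline is too optimistic: (i) the sporadic case is \emph{not} a routine finite check, since for many sporadics the full modular character tables and second cohomology groups are not known; (ii) for alternating groups the inductive scheme you sketch does not by itself give the constant $1$---the paper's own arguments for $A_{p+2}$ rely on the special arithmetic of $p+2$ with $p$ prime and $p\equiv 3\bmod 4$ to produce an amalgam presentation with a single relator, and no such argument is known for general $A_n$; (iii) in defining characteristic, generic cohomology only kicks in for large $q$, and the small-$q$ cases are genuinely open, not merely tedious. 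In short, your proposal correctly identifies the structure of the problem and the location of the obstruction, but it is a research programme, not a proof.
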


It is known that $\dim H^1(\tilde S,M)$ is relatively small with respect to
$\dim M$ (in \cite{GH}  it is shown that 
$\dim H^1(\tilde S,M) \le (1/2) \dim M$
for each  finite-dimensional  $\tilde S$-module $M$).
Therefore the following   stronger version of the preceding conjecture   seems   likely:
 
\begin{conjecture}\label{conjB+}
For every finite quasisimple group $S,$  every prime $p$ and every
 $\F_pS$-module $M,$
$$
\dim H^2(\tilde S,M) \leq \dim M.
$$
\end{conjecture}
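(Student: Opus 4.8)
The plan is to reduce the conjecture to the case of irreducible modules and then to treat that case via the classification of finite simple groups.

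\emph{Reduction to absolutely irreducible modules.} The desired bound is additive along composition series. For any short exact sequence $0 \to A \to M \to B \to 0$ of $\F_p\tilde S$-modules, the long exact cohomology sequence contains $H^2(\tilde S, A) \to H^2(\tilde S, M) \to H^2(\tilde S, B)$, so $\dim H^2(\tilde S, M) \le \dim H^2(\tilde S, A) + \dim H^2(\tilde S, B)$, while $\dim M = \dim A + \dim B$. Taking $A$ to be a simple submodule and inducting on the composition length of $M$, it suffices to prove $\dim H^2(\tilde S, M) \le \dim M$ for irreducible $M$; a flat base change to a splitting field together with the same additivity reduces further to absolutely irreducible $M$. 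The trivial module is immediate, since $\tilde S$ is superperfect and hence $H^2(\tilde S, \F_p) = 0 \le 1$.

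\emph{The nontrivial absolutely irreducible case.} Here I would invoke the classification and argue family by family. For a group of Lie type in its defining characteristic $p$, the plan is to relate $H^2(\tilde S, M)$ to the rational second cohomology of the ambient simple algebraic group $\mathbf{G}$ via the Cline--Parshall--Scott comparison with Frobenius-kernel and generic cohomology, thereby reducing the estimate to controlling $\mathrm{Ext}$-groups between Weyl modules in terms of the weight combinatorics, and then pushing the known linear bounds on such cohomology down to the optimal constant $1$. In cross characteristic, nontrivial irreducible modules have dimension bounded below (Landazuri--Seitz--Zalesskii), whereas $\dim H^2(\tilde S, M)$ is controlled by the $p$-local structure through restriction to and corestriction from a Sylow $p$-subgroup; as that subgroup is comparatively small, one expects $\dim H^2(\tilde S, M)$ to be only a small fraction of $\dim M$. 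The alternating groups can be approached through the branching relationship with the symmetric groups and the known modular cohomology of the latter, while the finitely many sporadic groups and the exceptional covers can in principle be settled by direct computation.

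\emph{The main obstacle.} The real difficulty is to obtain the \emph{sharp} constant $1$ rather than merely some absolute constant. The bound $\dim H^1(\tilde S, M) \le (1/2)\dim M$ of \cite{GH} supplies a clean uniform input for the first cohomology, but there is no comparably clean uniform statement for $H^2$, and the defining-characteristic cohomology of groups of Lie type is most delicate exactly for small-rank groups and low-dimensional modules---precisely the regime in which the ratio $\dim H^2(\tilde S, M)/\dim M$ is likely to be largest. Ruling out, uniformly across all such small cases, the possibility that this ratio exceeds $1$---rather than merely exhibiting one sporadic failure---is the crux, and is why the statement remains a conjecture.
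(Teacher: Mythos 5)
You were asked about Conjecture~\ref{conjB+}, and it is exactly that: an open conjecture. The paper contains no proof of it, and in fact states explicitly that lowering the constant from $17.5$ to $1$ ``would need new ideas.'' What the paper does prove is the weaker uniform bound $\dim H^2(\tilde S,M)\le (17.5)\dim M$ (quoted from \cite{GKKL2}), plus the conjecture in special cases: $\SL(2,q)$ and $\PSL(2,q)$ \cite{GKKL2}, $\Sz(2^{2k+1})$ \cite{Wil}, and covering groups of $A_{p+2}$ for primes $p\equiv 3 \bmod 4$ (Theorem~\ref{altcoho}, proved by a presentation/amalgam argument special to those groups). Your proposal, to its credit, acknowledges in its final paragraph that the crux is open; but that means it is a research program, not a proof, and it should not be presented as one. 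Everything that is actually proved in your write-up --- additivity of the bound along composition series via the long exact sequence, flat base change to a splitting field, and the trivial module case from superperfectness of $\tilde S$ --- is the routine part; the entire content of the conjecture sits in the ``nontrivial absolutely irreducible case,'' where your text consists of intentions (``the plan is to relate,'' ``one expects,'' ``can in principle be settled'') rather than arguments.

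Beyond the global gap, one of your heuristics would fail as a method. In cross characteristic you propose to control $\dim H^2(\tilde S,M)$ by restriction to a Sylow $p$-subgroup $P$ on the grounds that $P$ is ``comparatively small.'' Restriction to $P$ is indeed injective on cohomology, but the bound it yields is $\dim H^2(\tilde S,M)\le \dim H^2(P,M|_P)$, and the right-hand side is generically far larger than $\dim M$: finite $p$-groups have very large second cohomology (as the paper itself recalls via the Golod--Shafarevich theorem, $\dim H^2(P,\F_p)-\dim H^1(P,\F_p)$ is usually huge), and $M|_P$ is just a sum of copies of modules on which this blow-up occurs. Smallness of $|P|$ relative to $|\tilde S|$ does not translate into smallness of $H^2(P,M|_P)$ relative to $\dim M$. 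The actual cross-characteristic estimates in \cite{GKKL2} are obtained by completely different means --- presentations, amalgams, and control of $H^2$ through them --- and even those methods only reach the constant $17.5$ in general, which is precisely why the statement you were given remains a conjecture.
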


 Note that if $S$ is a finite quasisimple group and $\tilde  S $
satisfies Conjecture \ref{conjB+}, then it satisfies
the two earlier conjectures as well, and so, as noted above, every homomorphic
image of $\tilde S$ is proficient. 

 Our general result   \cite[Theorem B]{GKKL2}  approximates this:

\begin{theorem}
For every finite quasisimple group $S,$  every prime $p$ and every
 $\F_pS$-module $M,$
$$
\dim H^2(\tilde S,M) \leq (17.5)\dim M.
$$
\end{theorem}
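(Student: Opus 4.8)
The plan is to deduce the estimate from a bound on the relation deficiency of $\tilde S$ together with the $H^1$ estimate already quoted. The link is formula \eqn{rhat}: reading it as an inequality valid for each module separately, every $\F_p\tilde S$-module $M$ satisfies $\nu_2(M) \le \hr(\tilde S) - d(\tilde S) + 1$, so that
$$
\dim H^2(\tilde S, M) \le \big(\hr(\tilde S) - d(\tilde S) + 1\big)\dim M + \dim H^1(\tilde S, M) - \dim H^0(\tilde S, M).
$$
Concretely this comes from the inflation-restriction sequence of a minimal presentation $1 \to R \to F \to \tilde S \to 1$, with $F$ free on $d(\tilde S)$ generators and $R$ normally generated by $\hr(\tilde S)$ elements: since $H^2(F,M)=0$ and $\tilde S$ acts trivially on $R^{\rm ab}$, it collapses to the four-term exact sequence $0 \to H^1(\tilde S,M)\to H^1(F,M)\to \Hom_{\tilde S}(R^{\rm ab},M)\to H^2(\tilde S,M)\to 0$; computing $\dim H^1(F,M)=(d(\tilde S)-1)\dim M+\dim H^0(\tilde S,M)$ and bounding $\dim\Hom_{\tilde S}(R^{\rm ab},M)\le \hr(\tilde S)\dim M$ (valid because $R^{\rm ab}$ needs only $\hr(\tilde S)$ module generators) yields the displayed inequality for all $M$ at once.

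Next I would insert the two quantitative inputs. The estimate $\dim H^1(\tilde S, M) \le \frac12 \dim M$ of \cite{GH} controls the $H^1$ term, while $\dim H^0(\tilde S, M) \ge 0$ only helps. Thus the theorem is reduced to a single structural claim: for every finite quasisimple group $S$, the universal cover $\tilde S$ has relation deficiency $\hr(\tilde S) - d(\tilde S) \le 16$. Granting this, the displayed bound becomes $\dim H^2(\tilde S, M) \le (16 + 1 + \frac12)\dim M = (17.5)\dim M$, as desired. The trivial module requires no separate treatment, since $\tilde S$ is superperfect and hence $H^2(\tilde S, \F_p)=0$.

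The substance of the argument is therefore the deficiency bound, and here I would proceed through the classification of finite simple groups. The alternating and symmetric groups carry Coxeter-type presentations of bounded deficiency, and the finitely many sporadic groups contribute only a fixed constant that can be absorbed. The decisive case is the groups of Lie type: here I would assemble a presentation of $\tilde S$ from the Curtis-Steinberg-Tits relations, building the group from its rank-$1$ and rank-$2$ subsystem subgroups together with a presentation of the Weyl group, and separately dispose of the bounded-rank and twisted (Suzuki and Ree) families by direct construction, keeping track of the central extension data that distinguishes $\tilde S$ from $S$.

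The hard part will be to make the number of relations for Lie type groups bounded independently of the rank. A naive Curtis-Steinberg-Tits presentation uses on the order of $\rank^2$ relations, so an absolute bound near $16$ forces one to encode the recursive structure of the root system economically rather than to list relations root by root. Overcoming this is precisely the quantitative heart of the matter; once a presentation with $\hr(\tilde S) - d(\tilde S) \le 16$ is in hand, the remaining steps -- the inflation-restriction sequence and the appeal to the $H^1$ bound -- are entirely formal.
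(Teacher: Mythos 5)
The first thing to note is that the paper does not prove this theorem at all: it is quoted verbatim from \cite[Theorem B]{GKKL2}, so the only proof to compare yours against is the one in that companion paper. Your formal reduction is in fact exactly the mechanism used there, and you have decoded the constant correctly: from a profinite presentation $1 \rightarrow \widehat{R} \rightarrow \widehat{F} \rightarrow \tilde S \rightarrow 1$ with $d$ generators and $r$ relations, the four-term exact sequence $0 \rightarrow H^1(\tilde S,M) \rightarrow H^1(\widehat{F},M) \rightarrow \Hom_{\tilde S}(\widehat{R}^{\rm ab},M) \rightarrow H^2(\tilde S,M) \rightarrow 0$ gives $\dim H^2(\tilde S,M) \le (r-d+1)\dim M + \dim H^1(\tilde S,M) - \dim H^0(\tilde S,M)$ for every finite-dimensional module (so, correctly, no restriction to irreducibles is needed), and $r-d \le 16$ together with the bound $\dim H^1(\tilde S,M) \le \frac12 \dim M$ of \cite{GH} yields $16+1+\frac12 = 17.5$. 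This is precisely how $17.5$ arises in \cite{GKKL2}, whose Theorem B couples the cohomological bound with the statement that every finite quasisimple group has a profinite presentation with $2$ generators and $18$ relations.

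The genuine gap is that the input you propose to ``grant'' --- that $\hr(\tilde S)-d(\tilde S)\le 16$ for every finite quasisimple group --- is not an auxiliary fact but the entire substance of the theorem. Worse, by Lubotzky's formula (\ref{rhat}) this profinite deficiency bound is essentially equivalent to the $H^2$ bound you are trying to prove: bounding $\hr - d$ means bounding $\nu_2(M)$, hence $\dim H^2(\tilde S, M)$, over all irreducible modules. So the reduction is circular unless the presentations you construct are \emph{discrete}, where relations are counted combinatorially; producing such presentations with $r-d$ bounded independently of the Lie rank \emph{and} of $q$ is the main achievement of \cite{GKKL} and \cite{GKKL2} (it requires, for instance, presentations of $\PSL(2,q)$ and of its Borel subgroup with an absolutely bounded number of relations, not merely an economical encoding of the Curtis--Steinberg--Tits data). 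Moreover, your plan to dispose of the twisted families ``by direct construction'' breaks down concretely: for the Ree groups ${}^2G_2(q)$ no presentation with a bounded number of relations is known --- they are explicitly excluded from the presentation theorems of \cite{GKKL} --- and the proof in \cite{GKKL2} must treat this family by separate cohomological arguments (restriction to suitable subgroups) rather than via any presentation. In short, your proposal reconstructs the formal shell of the cited proof and correctly locates the difficulty, but what it leaves unproved is the theorem itself.
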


Thus, the preceding  conjecture  drops the constant $17.5$ to $1$, and in some cases 
there are even  
much stronger bounds.
However,  decreasing 17.5   to
1 in general would need new ideas.

In addition to the simple groups known to be efficient (and hence also proficient),
${\rm Sz}(2^{2k+1})$  is proficient and satisfies Conjecture \ref{conjB+}  \cite{Wil}.
By \cite[Theorems 7.2, 7.3]{GKKL2},    $\SL(2,q)$ and 
$\PSL(2,q)$ also satisfiy  Conjecture \ref{conjB+}, whence they are proficient.

The main results of the present paper add  more groups to this list.  
For example, 
by Theorem~\ref{H2 estimate Alt Sym},
if $p \equiv 3 \mod 4$ is prime   then $A_{p+2}$ satisfies
Conjecture \ref{conjB+}:

\begin{theorem} \label{altcoho} If $p \equiv 3 \mod 4$ is prime
and $X$ is a quasisimple group with $X/Z(X) \cong A_{p+2} ,$
then $\dim H^2(X,M) \le \dim M$.
\end{theorem}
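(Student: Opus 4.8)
The plan is to reduce to irreducible modules, dispose of the defining prime $p$ by a direct local computation, and treat the remaining small characteristics by a Shapiro-type recursion, with the congruence $p\equiv3\bmod4$ entering only at the end, in characteristic $2$. For the reduction, note that both sides behave well under short exact sequences ($\dim$ is additive and $\dim H^2(X,-)$ is subadditive), so it suffices to prove $\dim H^2(X,M)\le\dim M$ for every irreducible $M$ over a field of characteristic $\ell$. Because $p\equiv3\bmod4$ forces $n:=p+2\in\{5,9,13,21,\dots\}$, the exceptional degrees $n=6,7$ never occur, so $M(A_n)=C_2$ and every quasisimple $X$ with $X/Z(X)\cong A_{p+2}$ is either $A_{p+2}$ or its universal cover $\tilde A_{p+2}=2\cdot A_{p+2}$; I would run both through the argument together. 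If $\ell\nmid|X|$ then $H^2(X,M)=0$, so assume $\ell\mid|X|$, i.e. $\ell\le n$.

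The case $\ell=p$ is clean and, reassuringly, uses nothing about the congruence. Since $p<n<2p$, a Sylow $p$-subgroup $P\cong C_p$ is cyclic; as $[X:P]$ is prime to $p$, restriction embeds $H^2(X,M)$ into $H^2(N,M)$ for $N:=N_X(P)$. Here $P\lhd N$ with $|N/P|$ prime to $p$, so the Lyndon--Hochschild--Serre spectral sequence collapses and $H^2(N,M)\cong H^2(P,M|_P)^{N/P}$. Decomposing $M|_P$ into Jordan blocks for a generator of $P$, one computes $\dim H^2(P,M|_P)=\dim M^{P}-(\text{number of full Jordan blocks})\le\dim M^{P}\le\dim M$, and passing to $N/P$-invariants only decreases this. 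Hence $\dim H^2(X,M)\le\dim M$ when $\ell=p$, for $A_{p+2}$ and its cover alike.

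The genuinely harder case is $\ell\ne p$ with $\ell\le n$, the dangerous range being $\ell=2$. Here I would run the usual reduction along the action of $A_n$ on $\Omega=\{1,\dots,n\}$: since $\F_\ell\Omega=\mathrm{Ind}_{A_{n-1}}^{A_n}\F_\ell$, Shapiro's lemma identifies $H^m(A_n,M\otimes\F_\ell\Omega)$ with $H^m(A_{n-1},M|_{A_{n-1}})$, and tensoring the augmentation sequence $0\to\F_\ell\to\F_\ell\Omega\to U\to0$ with $M$ bounds $\dim H^2(A_n,M)$ by $\dim H^2(A_{n-1},M|_{A_{n-1}})$ plus an $H^1$-error. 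Iterating down toward the two-point stabilizer $A_{n-2}=A_p$ and small base cases, and controlling every error term by the bound $\dim H^1(\tilde S,-)\le\frac12\dim(-)$ of \cite{GH} together with the global estimate $\dim H^2(\tilde S,-)\le(17.5)\dim(-)$ of \cite{GKKL2}, yields a linear bound $\dim H^2(X,M)\le c(n)\dim M$; the entire object is to show $c(p+2)\le1$ (the cover differs only through its central involution, which matters only at $\ell=2$).

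I expect the main obstacle to be exactly this constant-tracking in characteristic $2$, and this is where $p\equiv3\bmod4$ must enter. The congruence fixes the parities of $(p\pm1)/2$ and hence the $2$-local structure of the sections that surface in the recursion --- the half-affine normalizer $\AGL(1,p)\cap A_p$ and the natural subgroup $\PSL(2,p)\le A_{p+1}$ acting on the projective line of $p+1$ points (for which Conjecture~\ref{conjB+} is already known by \cite{GKKL2}). It is this $2$-local information, rather than anything special to the covering group, that I expect forces the accumulated error terms to cancel and pushes the constant to its optimal value $1$; the odd cross-characteristics should fall comfortably below $1$ from the crude recursion alone.
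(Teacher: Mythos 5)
Your reduction to irreducible modules and your defining-characteristic argument (cyclic Sylow $p$-subgroup, restriction to its normalizer, Jordan-block computation) are fine; the odd cross-characteristic cases can also legitimately be dispatched by citing \cite[Theorems 6.2, 6.4, 6.5]{GKKL2}, as the paper does. But the heart of the theorem is characteristic $2$, and there your proposal has no proof: you set up a Shapiro's-lemma recursion down the chain $A_n \supset A_{n-1} \supset \cdots$ whose error terms you propose to control by $\dim H^1 \le \frac12\dim M$ and $\dim H^2 \le 17.5\,\dim M$, and then you say you ``expect'' the congruence $p\equiv 3\bmod 4$ to force the accumulated errors to cancel. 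This is exactly the step that fails: each step of such a recursion \emph{adds} an $H^1$-error term, the errors accumulate rather than cancel, and nothing in the recursion ever sees the $2$-local structure of $\AGL(1,p)\cap A_p$ or $\PSL(2,p)$ that you invoke. No amount of constant-tracking along that induction brings $17.5$ (or even the accumulated halves) down to $1$; the paper itself remarks that decreasing $17.5$ to $1$ in general ``would need new ideas.''

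The paper's characteristic-$2$ argument is of a completely different nature, and the congruence enters elsewhere. By Lemma~\ref{amalgam1}, $A_{p+2}\times T$ (with $T$ the index-$2$ subgroup of $\AGL(1,p)$, of odd order $p(p-1)/2$) is a quotient $X/N$ of a \emph{free amalgamated product} $X$ of two odd-order groups, with $N$ the normal closure of a single element $x^2$; the hypothesis $p\equiv 3\bmod 4$ is used to make $T$ transitive on $2$-element subsets of a $p$-element orbit ($-1$ is a non-square mod $p$), which is what collapses all of Carmichael's relations $(x_ix_j)^2=1$ to that single relation. Since both amalgamated factors have odd order, $H^2(X,M)=0$ in characteristic $2$ (Corollary~\ref{H2=0}), so inflation-restriction gives $\dim H^2(A_{p+2},M)\le\dim\hom_{A_{p+2}}(N/[N,N],M)\le\dim C_M(x)$, and generation of $A_{p+2}$ by $(p+1)/2$ conjugates of $x$ \cite{gursaxl} yields the \emph{strict} bound $\frac{p-1}{p+1}\dim M$ for nontrivial irreducible $M$. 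That strictness is then essential (and missing from your plan) for the quasisimple covers: for $2A_{p+2}$ at $\ell=2$ the center acts trivially on $M$ and one must absorb an extra $\dim H^1(A_{p+2},M)\le(\dim M)/(p-1)$ term via Lemma~\ref{covering}, which works only because $\frac{p-1}{p+1}+\frac{1}{p-1}\le 1$ (Lemma~\ref{h2+h1}); your remark that the cover ``differs only through its central involution'' does not engage with this.
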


In particular, this gives the first known examples of 
proficient simple groups
(and universal covers) where the ``rank'' 
goes  to infinity -- viewing alternating and symmetric groups as groups of Lie type over ``the field of order $1$"
\cite{Tits}.

\begin{theorem}  \label{altp+2}
Let $p$ be a prime.~\begin{enumerate}
\item[{\rm(a)}]
 If $p \equiv 2 \mod 3$ then $S_{p+2}$ is proficient$:$   it has a
profinite presentation with $2$ generators and $3$ relations$;$
\item[{\rm(b)}]
 If $p \equiv 3 \mod 4$ then $A_{p+2}$ and its double cover
$2A_{p+2}$ are proficient$:$
$A_{p+2}$  $($resp. $2A_{p+2})$ has a
profinite presentation with $2$ generators and $3$ $($resp. $2)$ relations$;$
\item[{\rm(c)}]  If $p \equiv 3 \mod 4$ then $S_{p+2}$ and either double cover
$2S_{p+2}$ are proficient$:$
$S_{p+2}$ $($resp. $2S_{p+2})$  has a
profinite presentation with $2$ generators and $3$ $($resp. $2)$ relations$;$
\item[{\rm(d)}]
 $\SL(2,q)$ is proficient for  every prime power $q \ge 4; $
\item[{\rm(e)}]
$\PSL(3,q)$ is proficient for every prime power $q \equiv 1 \mod 3;$  and
\item[{\rm (f)}]   $\PSL(4,q)$ is proficient for every odd $q$.
\end{enumerate}
\end{theorem}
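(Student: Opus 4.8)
The plan is to derive every assertion from the cohomological criterion (\ref{prof ineq2}) together with the $H^2$-estimates for alternating and symmetric groups. For each group $G$ in the list I first read off the threshold $\max_p\nu_2(\F_p)=1+d(M(G))$ from its Schur multiplier: the superperfect cover $2A_{p+2}$ and the double cover $2S_{p+2}$ have $d(M)=0$, hence threshold $1$, while $A_{p+2}$, $S_{p+2}$, $\PSL(3,q)$ and $\PSL(4,q)$ have cyclic multipliers and threshold $2$. All of these groups are $2$-generated, so once proficiency is known Proposition~\ref{lub0.1} furnishes a profinite presentation with $2$ generators and $2+d(M(G))$ relations, which is exactly the count in (a)--(f). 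It therefore suffices to check, for every nontrivial irreducible $\F_pG$-module $M$ (where $H^0(G,M)=0$, so $\nu_2(M)=\lceil(\dim H^2(G,M)-\dim H^1(G,M))/\dim M\rceil$), that $\nu_2(M)$ does not exceed the threshold.

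Parts (b) and (d) are then immediate. For (b), Theorem~\ref{altcoho} gives $\dim H^2(X,M)\le\dim M$ for every quasisimple $X$ with $X/Z(X)\cong A_{p+2}$ when $p\equiv 3\bmod 4$, whence $\nu_2(M)\le 1$ for all nontrivial $M$; applying this to $X=2A_{p+2}$ (threshold $1$) and $X=A_{p+2}$ (threshold $2$) proves both proficient. For (d), $\SL(2,q)$ is a homomorphic image of the universal cover of $\PSL(2,q)$, which satisfies Conjecture~\ref{conjB+} by \cite[Theorems 7.2, 7.3]{GKKL2}; since every homomorphic image of a group satisfying \ref{conjB+} is proficient, $\SL(2,q)$ is proficient for all $q\ge 4$.

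For the symmetric cases I descend to the index-$2$ subgroups $A_{p+2}\lhd S_{p+2}$ and $2A_{p+2}\lhd 2S_{p+2}$ and run the Lyndon--Hochschild--Serre sequence for the quotient $\Z/2$, which expresses $H^2$ of the larger group through $H^0(\Z/2,H^2(-,M))$, $H^1(\Z/2,H^1(-,M))$ and $H^2(\Z/2,M^{(-)})$ of the subgroup. When $p\equiv 3\bmod 4$, Theorem~\ref{altcoho} bounds the first term by $\dim M$, the estimate $\dim H^1\le\tfrac12\dim M$ of \cite{GH} bounds the second, and the third vanishes for nontrivial irreducible $M$ (the only nontrivial module on which $A_{p+2}$ acts trivially is the sign module, and $H^2(\Z/2,-)$ is then zero in odd characteristic). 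In odd characteristic the positive-degree $\Z/2$-terms drop out and $\dim H^2(S_{p+2},M)\le\dim M$, while in characteristic $2$ one gets at most $\tfrac32\dim M$; either way $\nu_2(M)\le 2$, proving (c) for $S_{p+2}$, and in odd characteristic the same computation for $2S_{p+2}$ lands below its threshold $1$. For part (a), where only $p\equiv 2\bmod 3$ is assumed and Theorem~\ref{altcoho} is unavailable, I would instead invoke the symmetric-group half of the master estimate (Theorem~\ref{H2 estimate Alt Sym}), whose hypothesis $p\equiv 2\bmod 3$ supplies the bound $\dim H^2(S_{p+2},M)\le 2\dim M$ needed for threshold $2$.

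Parts (e) and (f) follow the pattern of (d): for $q\equiv 1\bmod 3$ (resp. odd $q$) the universal cover of $\PSL(3,q)$ (resp. $\PSL(4,q)$) is $\SL(3,q)$ (resp. $\SL(4,q)$), and I would feed the sharpened $\dim H^2\le\dim M$ bounds available for these low-rank groups in \cite{GKKL2} into the homomorphic-image principle. The main obstacle is the characteristic-$2$ analysis of the double covers $2S_{p+2}$ in (c): there the threshold $1$ is tight, the terms $H^i(\Z/2,-)$ no longer vanish, and $M|_{2A_{p+2}}$ may be reducible, so one cannot bound $H^2$ and $H^1$ separately but must track the combination $\dim H^2(2S_{p+2},M)-\dim H^1(2S_{p+2},M)$ through the spectral sequence and show it stays $\le\dim M$. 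Securing the $p\equiv 2\bmod 3$ estimate of (a) is the other substantive point, since it does not reduce to the alternating bound.
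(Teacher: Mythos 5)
Your reductions for (b) and (d) are sound and match the paper (Theorem~\ref{altcoho}, i.e.\ Theorem~\ref{H2 estimate Alt Sym}, plus Corollary~\ref{cov-cyclic} for (b); \cite[Theorems 7.2, 7.3]{GKKL2} plus Corollary~\ref{cov-cyclic} for (d)), but three of the remaining parts have genuine gaps. The most serious is (a): the estimate you propose to invoke does not exist. Theorem~\ref{H2 estimate Alt Sym} has hypothesis $p\equiv 3\bmod 4$ (all of Section~\ref{altsec} fixes that congruence), not $p\equiv 2\bmod 3$, and it gives $\le\dim M$, not $\le 2\dim M$; there is no cohomological bound anywhere in the paper, or in \cite{GKKL2}, that applies to $S_{p+2}$ under $p\equiv 2\bmod 3$. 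The congruence in (a) is not cohomological at all: the paper proves (a) by citing \cite[Corollary 3.13(ii)]{GKKL3}, which for $p\equiv 2\bmod 3$ produces a \emph{discrete} presentation with $2$ generators and $3$ relations of a group $G$ of index $2$ in $S_{p+2}\times\AGL(1,p)$ surjecting onto both factors, and then applies the corollary following Proposition~\ref{SxT} (with $Y$ the kernel of $G\to S_{p+2}$) to pass from $G$ to the quotient $S_{p+2}$. Any purely cohomological plan is bound to fail here, because it could not see the congruence condition at all.

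The second gap, which you flag but do not close, is $2S_{p+2}$ in characteristic $2$ in part (c). Your LHS sequence over $2A_{p+2}\lhd 2S_{p+2}$ indeed cannot give the needed bound separately on $H^2$; the paper's trick (Theorem~\ref{2Sn}) is to quotient by the \emph{center} instead: Lemma~\ref{covering} with $Z=Z(2S_{p+2})$ of $2$-rank $1$ gives $\dim H^2(2S_{p+2},M)\le\dim H^2(S_{p+2},M)+\dim H^1(2S_{p+2},M)$, so the $H^1$ error term cancels exactly against the $-\dim H^1$ in $\nu_2(M)$, leaving $\nu_2(M)\le 1$ from Theorem~\ref{H2 estimate Alt Sym}; one must also verify $\nu_2(\F_2)=1$ for the trivial module, which the paper does via \cite[Lemma 3.11]{GKKL2} using that the derived subgroup $2A_{p+2}$ is superperfect. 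Third, your route to (e) and (f) relies on ``sharpened $\dim H^2\le\dim M$ bounds'' for $\SL(3,q)$ and $\SL(4,q)$ in \cite{GKKL2}; no such bounds are available there (the paper explicitly says reducing the general constant to $1$ ``would need new ideas''). The paper instead uses \cite[Theorem E]{GKKL2}, which gives only $\dim H^2(G,M)\le 2\dim M$, and here is the real role of the congruences $q\equiv 1\bmod 3$ and $q$ odd: they force $M(\PSL(3,q))$ and $M(\PSL(4,q))$ to be \emph{nontrivial}, so the threshold $1+d(M(G))$ is at least $2$ and the factor-$2$ bound suffices. You have the logic inverted: the congruence buys slack in the target, not sharpness in the cohomology.
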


While parts (d)-(f) are   immediate applications of our  
results in \cite[Section 7]{GKKL2}, 
parts (a)-(c) require a combination of results on discrete 
presentations from \cite{GKKL3}  together 
with   cohomological arguments.  In particular, we use the 
following result:

\begin{proposition} \label{SxT}
Let $G = X \times Y$ be a finite group. Suppose $G$ has a
profinite presentation with $d$ generators and $r$ relations.
If $d(X)=d'$ then $X$ has a profinite presentation with $d'$ generators
and $ r - (d-d')$ relations.
\end{proposition}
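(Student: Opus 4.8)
The plan is to recast the statement through the cohomological formula (\ref{rhat}) and then to exploit $G=X\times Y$ via the K\"unneth theorem. For a finite group $H$, a prime $p$, and an irreducible $\F_pH$-module $M$ set $N_H(M)=\dim H^2(H,M)-\dim H^1(H,M)+\dim H^0(H,M)$ and $\nu_2^{H}(M)=\lceil N_H(M)/\dim M\rceil$. The first step is the version of (\ref{rhat}) with a prescribed number of generators: for $d\ge d(H)$ every profinite presentation of $H$ on $d$ generators has at least $r_d(H):=(d-1)+\max_{p,M}\nu_2^{H}(M)$ relations, and this bound is attained. (The lower bound comes from the Gruenberg resolution; attainment comes from adjoining to a minimal presentation $d-d(H)$ free generators, each contributing a free summand $\Z_p[[H]]$ to the relation module and one relation.) Since $X$ is a quotient of $G$ we have $d'=d(X)\le d(G)\le d$; applying the bound to the given presentation gives $r\ge r_d(G)$, so $r-(d-d')\ge(d'-1)+\max_{p,M}\nu_2^{G}(M)$, whereas $X$ itself admits a presentation on $d'$ generators with $\hr(X)=(d'-1)+\max_{p,M}\nu_2^{X}(M)$ relations. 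Hence the proposition follows once I prove $\max_{p,M}\nu_2^{X}(M)\le\max_{p,M}\nu_2^{G}(M)$, after which I adjoin the $r-(d-d')-\hr(X)\ge0$ redundant relators needed to reach the stated count.

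To establish the inequality, fix a prime $p$ and an irreducible $\F_pX$-module $M$, and inflate it along $G\to X$ to an $\F_pG$-module on which $Y$ acts trivially; this inflation is again irreducible and has the same dimension. Write $h_j=\dim H^j(Y,\F_p)$, so $h_0=1$. The K\"unneth theorem for the direct product yields
\[
\dim H^n(G,M)=\textstyle\sum_{i+j=n}\dim H^i(X,M)\,h_j\qquad(n=0,1,2),
\]
and forming the alternating sum in the numerator $N_G(M)$ --- whose $j=0$ terms reproduce $N_X(M)$ --- gives
\[
N_G(M)-N_X(M)=h_1\,\dim H^1(X,M)+(h_2-h_1)\,\dim H^0(X,M).
\]
Because inflation preserves dimension, $\nu_2^{G}(M)\ge\nu_2^{X}(M)$ as soon as this difference is nonnegative; and since the inflated modules form a subfamily of the irreducible $\F_pG$-modules, taking maxima over all $p$ and $M$ then gives the desired inequality.

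The nonnegativity of that difference is the heart of the matter. For every nontrivial irreducible $M$ one has $\dim H^0(X,M)=0$, leaving $h_1\dim H^1(X,M)\ge0$, so there is nothing to check; the delicate case is the trivial module $M=\F_p$, where the term $(h_2-h_1)\dim H^0(X,M)$ persists. I expect controlling its sign to be the main obstacle, and it is resolved by observing that $h_2-h_1=\dim H^2(Y,\F_p)-\dim H^1(Y,\F_p)$ is the rank of the Sylow $p$-subgroup of the Schur multiplier $M(Y)$, hence $\ge0$ by (\ref{nuschur}). Thus the single identity above settles all modules at once. The only other thing to pin down is the generator-indexed formula $r_d(H)$; this is routine from the Gruenberg resolution $0\to\overline R\to\Z_p[[H]]^{\,d}\to I_H\to0$, where $I_H$ is the augmentation ideal, since its fixed outer terms determine $\dim\Hom_H(\overline R,S)$ for each simple $S$ and therefore the number of module generators of the relation module $\overline R$.
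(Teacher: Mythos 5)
Your proof is correct, and while it shares the unavoidable outer skeleton with the paper's own proof --- both reduce, via Proposition \ref{lub0.1} and (\ref{rhat}), to showing that for every prime $p$ and every irreducible $\F_pX$-module $M$, inflated to $G$ with $Y$ acting trivially, the quantity $\dim H^2-\dim H^1+\dim H^0$ does not drop in passing from $X$ to $G$ --- your treatment of that key inequality is genuinely different. The paper argues in two cases: for nontrivial $M$ it uses the inflation-restriction sequence (Lemma \ref{h2-h1 nontrivial} with $N=Y$, the point being that there is no nontrivial $G$-equivariant homomorphism from $Y$ onto $M$), and for $M=\F_p$ it uses Lemma \ref{schur} together with the containment of $M(X)$ in $M(X\times Y)$. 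You instead invoke the K\"unneth formula (legitimate here, and indeed the same tool the paper cites as \cite[Lemma~3.1]{GKKL2} in proving Lemma \ref{3.3}) to obtain the exact identity
$$
N_G(M)-N_X(M)=h_1\dim H^1(X,M)+(h_2-h_1)\dim H^0(X,M),\qquad h_j:=\dim H^j(Y,\F_p),
$$
where $N_H(M)$ denotes your numerator $\dim H^2(H,M)-\dim H^1(H,M)+\dim H^0(H,M)$; this handles trivial and nontrivial modules uniformly and reduces everything to $h_2\ge h_1$, which is Lemma \ref{schur} applied to $Y$ (your pointer to (\ref{nuschur}) for this fact should really be to Lemma \ref{schur}). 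What your route buys is a single exact formula, with the equality cases visible, in place of two case-by-case estimates; note also that your ``generator-indexed'' count $r_d$ is immediate from Proposition \ref{lub0.1} combined with (\ref{rhat}), so the sketched independent derivation via the Gruenberg resolution is not needed. What the paper's route buys is generality: inflation-restriction never uses the product structure, only that the normal subgroup $Y$ admits no $G$-equivariant homomorphism onto a nontrivial irreducible $X$-module, and this is exactly what lets the paper extend Proposition \ref{SxT} to the corollary that follows it, concerning arbitrary quotients $X=G/Y$; the K\"unneth computation cannot reach that generalization.
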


We do not know if the analogue of this proposition holds for discrete presentations.
This  is an interesting subcase of the question whether 
or not $r(G) = \hat r(G)$.  

A trivial consequence of the K\"unneth formula and Theorem \ref{altp+2}(b)(d) gives the following:

\begin{corollary} \label{0 prodeficiency}   
For $1 \le i \le t$ let $G_i $ be either $ A_{p_i+2}$ with $p_i \equiv 3 \mod 4$ prime or $\SL(2,q_i), q_i > 4$.   
Let $G=G_1 \times \ldots \times G_t$.   Then  $G$ has a profinite presentation with $d(G)$ generators and $d(G)$ relations.
In particular, for any integer $d > 1$, there exist infinitely many  finite perfect groups $G$ that have a profinite presentation
with $d=d(G)$ generators and $d(G)$ relations.
\end{corollary}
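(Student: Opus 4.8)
The plan is to read the corollary off the profinite lower bound \eqn{prof ineq}, which says $\hat r(G)\ge d(G)+d(M(G))$ for every finite group; consequently a profinite presentation of $G$ with $d(G)$ generators and $d(G)$ relations exists precisely when $G$ is proficient \emph{and} $d(M(G))=0$. So the argument splits into a multiplier computation and a proficiency verification, after which one arranges the presentation on exactly $d(G)$ generators. First I would compute $M(G)$ from the K\"unneth formula: since each $G_i$ is perfect the cross terms $G_i^{\mathrm{ab}}\otimes G_j^{\mathrm{ab}}$ vanish and $M(G)=\bigoplus_i M(G_i)$. Hence $d(M(G))=0$ exactly when every factor has trivial Schur multiplier. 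Among the groups in the statement this holds for $\SL(2,q_i)$ with $q_i\ne 9$, and for the alternating entries it singles out the quasisimple group with trivial multiplier and central quotient $A_{p_i+2}$ -- namely the double cover $2A_{p_i+2}$, for which Theorem \ref{altp+2}(b) already supplies a two-generator, two-relation profinite presentation. (For the simple group $A_{p_i+2}$ itself one has $M=\Z/2$, so by \eqn{prof ineq} each such factor forces one extra relation, exactly as recorded in Theorem \ref{altp+2}(b).)

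Second -- and this is the substantive step -- I would verify that $G$ is proficient through the cohomological criterion \eqn{prof ineq2}. Because $M(G)=0$, Lemma \ref{schur} gives $\nu_2(\F_p)=1$ for every $p$, so it suffices to prove $\nu_2(M)\le 1$, equivalently $\dim H^2(G,M)\le\dim M$, for every nontrivial irreducible $\F_pG$-module $M$ (for such $M$ one has $H^0(G,M)=0$ and $\dim H^1(G,M)\ge0$). After extending scalars to $\overline{\F}_p$ one may take $M=M_1\otimes\cdots\otimes M_t$ with each $M_i$ an irreducible $G_i$-module, at least one nontrivial, and then expand $H^2(G,M)$ by K\"unneth. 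If exactly one $M_i$ is nontrivial, only the $H^2(G_i,M_i)$-term survives -- the remaining $H^2(G_j,\overline{\F}_p)$-terms die because $H^0(G_i,M_i)=0$, and the $H^1\!\cdot\!H^1$ terms die because $H^1(G_j,\overline{\F}_p)=0$ for the perfect $G_j$ -- and the single-factor bound applies: $\dim H^2(G_i,M_i)\le\dim M_i=\dim M$, by Theorem \ref{altcoho} for the covers $2A_{p_i+2}$ and by \cite[Theorems 7.2, 7.3]{GKKL2} for $\SL(2,q_i)$. If exactly two $M_i$ are nontrivial only an $H^1(G_i,M_i)\otimes H^1(G_j,M_j)$ term survives, and the bound $\dim H^1\le\frac12\dim$ of \cite{GH} gives $\dim H^2(G,M)\le\frac14\dim M$; if three or more are nontrivial every relevant term vanishes. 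In all cases $\dim H^2(G,M)\le\dim M$, so $G$ is proficient and $\hat r(G)=d(G)+d(M(G))=d(G)$.

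Third, I would realize this on exactly $d(G)$ generators: Proposition \ref{lub0.1} converts proficiency into a proficient presentation on $d(G)$ generators, so $G$ has a profinite presentation with $d(G)$ generators and $\hat r(G)=d(G)$ relations (alternatively one peels off the factors one at a time using Proposition \ref{SxT}). For the concluding ``in particular'' clause I would not mix factors at all: fix $d>1$ and take $G=\SL(2,q)^k$ with $q\ge5$ and $q\ne9$. Each $\SL(2,q)$ is perfect with trivial multiplier, so $M(G)=0$, and $G$ is proficient by Theorem \ref{theorem 18}(c); hence $\hat r(G)=d(G)$. By the Hall--Wiegold counting of generating tuples, $\SL(2,q)^k$ is $d$-generated exactly when $k$ does not exceed the number of generating $d$-tuples of $\SL(2,q)$ divided by $|\Aut\SL(2,q)|$, so choosing $k$ just below that threshold for $d$ (and above the corresponding threshold for $d-1$) forces $d(G)=d$; letting $q$ range over the infinitely many admissible prime powers yields infinitely many pairwise non-isomorphic finite perfect groups with a profinite presentation on $d$ generators and $d$ relations.

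The one genuinely delicate point, and where I expect the main difficulty, is the K\"unneth bookkeeping in the second step: over the non-algebraically-closed field $\F_p$ an irreducible module of the product need not be an external tensor product, so one must first descend the inequality $\dim H^2\le\dim M$ from $\overline{\F}_p$ and then track which cohomological blocks of $H^2(G,M)$ can survive for each pattern of trivial and nontrivial tensor factors. Everything else -- the multiplier computation, the reduction to $d(G)$ generators via Proposition \ref{SxT}, and the generation count for the ``in particular'' clause -- is routine once the single-factor bounds of Theorem \ref{altcoho} and \cite[Theorems 7.2, 7.3]{GKKL2} are granted.
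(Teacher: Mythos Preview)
Your approach is essentially the paper's: the one-line justification there (``a trivial consequence of the K\"unneth formula and Theorem~\ref{altp+2}(b)(d)'') unpacks to exactly the K\"unneth casework you carry out in your second step, which is nothing other than Lemma~\ref{3.3} specialized to factors with $r_i=2$ and $d(M(X))=0$. You are also right to flag the statement as printed: for $\hat r(G)=d(G)$ one needs $d(M(G))=0$, which forces the alternating factors to be the double covers $2A_{p_i+2}$ rather than $A_{p_i+2}$ (this is clearly the intended reading --- the sentence immediately after the corollary speaks of ``the quasisimple groups in the corollary'') and excludes $q_i=9$; with the literal $A_{p_i+2}$ one has $d(M(G))\ge 1$ and the conclusion fails by \eqn{prof ineq}. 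Finally, the field-extension point you single out as ``genuinely delicate'' is not: passing to a splitting field is handled by \cite[Lemma~3.2]{GKKL2}, exactly as in the proof of Lemma~\ref{3.3}, so there is no descent issue to worry about.
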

 
In fact, fix  $S$ to be one of the quasisimple groups in the corollary.  Then for any integer $d > 1$, there is some $t$
such that    $d(S^t)=d$ and so $S^t$ has a profinite presentation with $d$ generators and $d$ relations.
 
This paper is organized as follows.  In \S \ref{coho1}, 
we give some preliminary results
on cohomology.  In \S \ref{dp}, we  deduce  Proposition~\ref{SxT}
and prove   other results on direct products.   Combining this
with results in  \cite{GKKL2} proves Theorem \ref{theorem 18}.
  In  \S \ref{dp}   we also prove parts of Theorem \ref{altp+2}.
In \S \ref{discrete}, we give   discrete presentations for groups related
to covers of alternating groups.  In \S \ref{coho2}, 
we prove further  results about
cohomology (in particular about the cohomology of amalgamated products).
In \S \ref{altsec}, we use our results on discrete presentations and 
cohomology to prove Theorem \ref{altcoho} in characteristic 2;   this bound was already
proved in odd  characteristic in \cite[Theorem 6.2]{GKKL2}. We then complete the proof 
of Theorem \ref{altp+2}. In the final section,
we give a general construction  of non-proficient perfect groups.

This paper is dedicated to the memory of  Karl Gruenberg whose major contributions
to the subject of discrete and profinite presentations have been an inspiration 
to many. 
 
 \section{Cohomology and preliminaries}  \label{coho1}
 
 If $G$ is a finite nilpotent group and $M$ is an irreducible $\F_pG$-module,
 then either $M$ is trivial or some normal $p'$-subgroup of $G$ acts without
 fixed points.  In the latter case, $H^i(G,M)=0$  for all $i$ (see \cite[Cor. 3.12(2)]{GKKL2}).
 Thus, we have the following trivial consequence  of (\ref{prof ineq2}):
 
 \begin{proposition} \label{nilpotent}  All finite nilpotent groups are proficient.
 \end{proposition}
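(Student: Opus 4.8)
The plan is to verify proficiency for finite nilpotent groups directly through the cohomological criterion (\ref{prof ineq2}), which states that $G$ is proficient precisely when $\max_{p,M}\nu_2(M)$ is attained by a trivial module. Since the quantity $\nu_2(M)$ is controlled by $\dim H^2(G,M)-\dim H^1(G,M)+\dim H^0(G,M)$, the key observation is that for nilpotent $G$, every \emph{nontrivial} irreducible $\F_pG$-module contributes nothing: all three cohomology groups vanish. If I can establish that, then $\max_{p,M}\nu_2(M)$ is forced to be attained on some trivial module, and proficiency follows immediately from (\ref{prof ineq2}).

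First I would invoke the structural fact stated in the paragraph preceding the proposition: if $G$ is finite nilpotent and $M$ is an irreducible $\F_pG$-module, then either $M$ is the trivial module, or some normal $p'$-subgroup $N\lhd G$ acts on $M$ without nonzero fixed points. This is precisely where nilpotency is used—a nilpotent group is the direct product of its Sylow subgroups, so the $p'$-part acts as a fixed-point-free normal complement whenever the module is nontrivial.

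Next, in the second (nontrivial) case, I would apply the vanishing result \cite[Cor. 3.12(2)]{GKKL2}, which says that if a normal $p'$-subgroup of $G$ acts without fixed points on $M$, then $H^i(G,M)=0$ for all $i$. Consequently $H^0$, $H^1$, and $H^2$ all vanish, so $\nu_2(M)=\lceil 0/\dim M\rceil=0$. Since $\nu_2$ of any trivial module $\F_p$ is at least $1$ (from the trivial case $H^0(G,\F_p)=\F_p$ giving a positive numerator), the maximum over all $p$ and all $M$ is always achieved on a trivial module and never on a nontrivial one. By the characterization in (\ref{prof ineq2}), this is exactly the statement that $G$ is proficient.

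The only subtlety—hardly an obstacle here—is ensuring the dichotomy for nontrivial irreducibles of nilpotent groups is applied correctly, namely that nontriviality genuinely forces fixed-point-free action of a normal $p'$-subgroup rather than merely of some non-normal subgroup. This rests on the Sylow decomposition of nilpotent groups and is already recorded in the excerpt, so the proof reduces to citing the two stated facts and reading off the criterion (\ref{prof ineq2}). I expect the entire argument to be a short paragraph, which matches the author's description of it as a ``trivial consequence.''
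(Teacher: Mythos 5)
Your proof is correct and follows exactly the paper's route: the paper likewise notes that a nontrivial irreducible $\F_pG$-module for a nilpotent group has a normal $p'$-subgroup acting fixed-point-freely, cites \cite[Cor. 3.12(2)]{GKKL2} for the vanishing of all $H^i(G,M)$, and reads off proficiency from (\ref{prof ineq2}). The only cosmetic point is that $\nu_2(\F_p)\ge 1$ is best justified by Lemma~\ref{schur} (i.e.\ $\dim H^2(G,\F_p)\ge \dim H^1(G,\F_p)$) rather than by $H^0$ alone, but this changes nothing.
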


The \emph{inflation restriction sequence}  will be used frequently 
(see \cite[2.6]{relmod}):

\begin{lemma}  \label{infres} Let 
$M$ a $\Z G$-module
for   a $($possibly infinite$)$ group $G$. 
If $N$ is a normal subgroup of  $G,$ then  there is an exact sequence
$$
0 \rightarrow H^1(G/N,M^N) \rightarrow H^1(G,M) \rightarrow H^1(N,M)^G
\rightarrow H^2(G/N,M^N) \rightarrow H^2(G,M).
$$
 \end{lemma}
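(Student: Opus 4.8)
The plan is to deduce this five-term exact sequence from the Lyndon--Hochschild--Serre (LHS) spectral sequence of the extension $1 \to N \to G \to G/N \to 1$. The key input is the existence of a first-quadrant cohomological spectral sequence
$$
E_2^{p,q} = H^p(G/N, H^q(N,M)) \Longrightarrow H^{p+q}(G,M),
$$
valid for an arbitrary (possibly infinite) group $G$, any normal subgroup $N$, and any $\Z G$-module $M$. I would obtain this as the Grothendieck spectral sequence for the composite of the two left-exact functors $(-)^N$ and $(-)^{G/N}$, whose composite is $(-)^G$; the only point to check is that $(-)^N$ carries injective $\Z G$-modules to $(-)^{G/N}$-acyclic $\Z[G/N]$-modules, which holds because an injective $\Z G$-module is cohomologically trivial upon restriction. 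Since everything is built from bar resolutions, no finiteness of $G$ is needed, so the ``possibly infinite'' hypothesis causes no trouble.

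Second, I would identify the relevant low-degree $E_2$-terms. Here $H^0(N,M) = M^N$, so $E_2^{1,0} = H^1(G/N, M^N)$ and $E_2^{2,0} = H^2(G/N, M^N)$; and since inner automorphisms act trivially on cohomology, the $G$-action on $H^1(N,M)$ factors through $G/N$, whence $E_2^{0,1} = H^0(G/N, H^1(N,M)) = H^1(N,M)^{G/N} = H^1(N,M)^G$. These match the four terms other than $H^i(G,M)$ in the asserted sequence.

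Third, I would invoke the standard five-term exact sequence attached to any first-quadrant spectral sequence. Because the sequence lives in the first quadrant, no differential can enter or leave the spot $(1,0)$, so $E_\infty^{1,0} = E_2^{1,0}$; the only differential touching $(0,1)$ is $d_2 \colon E_2^{0,1} \to E_2^{2,0}$, so $E_\infty^{0,1} = \ker d_2$; and the only differential touching $(2,0)$ is this same $d_2$, so $E_\infty^{2,0} = \mathrm{coker}\, d_2$. Feeding these into the two-step filtration $0 \to E_\infty^{1,0} \to H^1(G,M) \to E_\infty^{0,1} \to 0$ and the inclusion of the bottom graded piece $E_\infty^{2,0} \hookrightarrow H^2(G,M)$ splices together into exactly
$$
0 \to E_2^{1,0} \to H^1(G,M) \to E_2^{0,1} \xrightarrow{\,d_2\,} E_2^{2,0} \to H^2(G,M),
$$
which is the claim, the outer maps being inflation, the second map restriction, and $d_2$ the transgression.

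I expect the main obstacle to be the construction and justification of the LHS spectral sequence itself --- verifying acyclicity of the $(-)^N$-images of injectives and correctly identifying the edge maps with inflation and restriction --- rather than the formal five-term extraction, which is purely diagram-theoretic. A fully self-contained alternative, avoiding spectral sequences, is a direct cochain argument: define inflation and restriction on cocycles in the evident way, define the transgression by extending a $G$-invariant $1$-cocycle of $N$ to a $1$-cochain of $G$ and showing that its coboundary descends to a $2$-cocycle of $G/N$ valued in $M^N$, and then check exactness at each middle term by hand. This route trades the spectral-sequence machinery for a bounded but fiddly verification, in which establishing that the transgression is well defined is the delicate part. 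Since the statement is standard and cited here as \cite[2.6]{relmod}, either route suffices.
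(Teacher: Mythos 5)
Your proof is essentially correct, but there is nothing in the paper to compare it against: the paper does not prove this lemma at all, stating it with a citation to Gruenberg \cite[2.6]{relmod}. Your route --- the Lyndon--Hochschild--Serre spectral sequence $E_2^{p,q}=H^p(G/N,H^q(N,M))\Rightarrow H^{p+q}(G,M)$, identification of the low-degree $E_2$-terms, and the purely formal five-term exact sequence of a first-quadrant spectral sequence --- is the standard textbook proof, and it is valid for arbitrary (possibly infinite) $G$, as the lemma requires; your identification of the edge maps (inflation, restriction, transgression) and the bookkeeping with $d_2$, $E_\infty^{1,0}$, $E_\infty^{0,1}$, $E_\infty^{2,0}$ are all correct. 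One point deserves tightening: to set up the Grothendieck spectral sequence for $(-)^G=((-)^N)^{G/N}$ you must show that $I^N$ is $(-)^{G/N}$-acyclic when $I$ is an injective $\Z G$-module. Your stated justification, that an injective $\Z G$-module is ``cohomologically trivial upon restriction,'' addresses restriction to subgroups of $G$, but $G/N$ is not a subgroup; what is actually needed is that $I^N$ is an injective $\Z[G/N]$-module, which follows from the adjunction $\Hom_{\Z[G/N]}(A,I^N)\cong\Hom_{\Z G}(A,I)$ (inflation of modules is exact and $\Hom_{\Z G}(-,I)$ is exact). With that repair --- or by simply citing the LHS spectral sequence as known, which would be entirely in the spirit of the paper's own treatment --- your argument is complete; the cochain-level alternative you sketch is also a legitimate, if fiddlier, route.
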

 
 We will most often use this when $N$ acts trivially on $M$,
 in which case $M^N=M$ and $H^1(N,M)^G = \hom_{G/N}(N/[N,N],M)$.

We next  recall a well-known result about
the rank of the Schur multiplier  $M(G)=H^2(G, \CC^*)$
of a finite group  $G$:
\begin{lemma} \label{schur}  Let $G$ be a finite group and $p$ a prime.
Then $\dim H^2(G,\F_p) - \dim H^1(G,\F_p)$ is equal to the rank of the
Sylow $p$-subgroup of $M(G)$.   In particular$,$
$\dim H^2(G,\F_p) \ge \dim H^1(G,\F_p)$.
\end{lemma}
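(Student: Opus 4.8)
The plan is to reduce the statement to the $p$-ranks of the integral homology groups $H_1(G,\Z)$ (the abelianization of $G$) and $H_2(G,\Z)$, using the Universal Coefficient Theorem; the rank of the Sylow $p$-subgroup of $M(G)$ will turn out to be exactly the $p$-rank of $H_2(G,\Z)$.

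First I would record the standard identification $M(G)=H^2(G,\CC^*)\cong H_2(G,\Z)$. Since $\CC^*$ is divisible and hence injective as a $\Z$-module, the ${\rm Ext}$ term in the Universal Coefficient Theorem vanishes and $H^2(G,\CC^*)\cong\Hom(H_2(G,\Z),\CC^*)$; as $H_2(G,\Z)$ is a finite abelian group, Pontryagin duality gives $\Hom(H_2(G,\Z),\CC^*)\cong H_2(G,\Z)$. In particular $M(G)$ and $H_2(G,\Z)$ have Sylow $p$-subgroups of equal rank, so it suffices to prove that $\dim H^2(G,\F_p)-\dim H^1(G,\F_p)$ equals the $p$-rank of $H_2(G,\Z)$.

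Next I would apply the Universal Coefficient Theorem for cohomology with coefficients in the trivial module $\F_p$. In degree $1$ it yields $H^1(G,\F_p)\cong\Hom(H_1(G,\Z),\F_p)$, since ${\rm Ext}^1_\Z(H_0(G,\Z),\F_p)={\rm Ext}^1_\Z(\Z,\F_p)=0$. In degree $2$ it yields a short exact sequence of $\F_p$-vector spaces
$$
0\to{\rm Ext}^1_\Z(H_1(G,\Z),\F_p)\to H^2(G,\F_p)\to\Hom(H_2(G,\Z),\F_p)\to 0 ,
$$
so that $\dim H^2(G,\F_p)=\dim{\rm Ext}^1_\Z(H_1(G,\Z),\F_p)+\dim\Hom(H_2(G,\Z),\F_p)$ by additivity of dimension. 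The one elementary computation needed is that, for every finite abelian group $A$, both $\dim_{\F_p}\Hom(A,\F_p)$ and $\dim_{\F_p}{\rm Ext}^1_\Z(A,\F_p)$ equal the $p$-rank of $A$; this is checked by decomposing $A$ into cyclic summands and using the resolution $0\to\Z\xrightarrow{\,n\,}\Z\to\Z/n\to 0$ for each factor $\Z/n$.

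Combining these facts, $\dim H^1(G,\F_p)$ is the $p$-rank of $H_1(G,\Z)$, while $\dim H^2(G,\F_p)$ is the $p$-rank of $H_1(G,\Z)$ plus the $p$-rank of $H_2(G,\Z)$. Subtracting, the $H_1(G,\Z)$ contributions cancel and we are left with the $p$-rank of $H_2(G,\Z)$, which equals the rank of the Sylow $p$-subgroup of $M(G)$; the final inequality is then immediate since a rank is nonnegative. I do not expect a genuine obstacle here: the whole argument is bookkeeping with the Universal Coefficient Theorem, and the only point demanding care is the elementary lemma that makes the ${\rm Ext}$ term match $\dim H^1(G,\F_p)$ exactly, so that the two cancel.
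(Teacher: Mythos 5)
Your proof is correct, but it follows a genuinely different route from the paper's. The paper never mentions homology or the Universal Coefficient Theorem: it applies the long exact cohomology sequence to the short exact sequence of trivial $\Z G$-modules $0 \rightarrow \F_p \rightarrow \CC^* \rightarrow \CC^* \rightarrow 0$, where the second map is the $p$-th power map (surjective since $\CC^*$ is divisible). Since $M(G)=H^2(G,\CC^*)$ by definition, this produces an exact sequence of finite abelian groups
$$
0 \rightarrow H^1(G,\F_p) \rightarrow H^1(G,\CC^*) \rightarrow H^1(G,\CC^*)
\rightarrow H^2(G,\F_p) \rightarrow M(G) \rightarrow pM(G) \rightarrow 0,
$$
and counting orders gives $|H^1(G,\F_p)|\,|M(G)\colon pM(G)|=|H^2(G,\F_p)|$, which is precisely the lemma, because $|M(G)\colon pM(G)|=p^{r}$ with $r$ the rank of the Sylow $p$-subgroup of $M(G)$. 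You instead pass through integral homology, applying the Universal Coefficient Theorem twice: with $\CC^*$ coefficients (injectivity of $\CC^*$ kills the Ext term, and Pontryagin duality identifies $M(G)\cong H_2(G,\Z)$), and with $\F_p$ coefficients to express $\dim H^1(G,\F_p)$ and $\dim H^2(G,\F_p)$ through the $p$-ranks of $H_1(G,\Z)$ and $H_2(G,\Z)$. Both arguments are complete; your UCT steps are legitimate for group cohomology with trivial coefficients (the relevant cochain complex is $\Hom$ of a complex of free abelian groups, e.g.\ via the bar resolution or the classifying space), and the finiteness of $H_1(G,\Z)$ and $H_2(G,\Z)$ that you invoke holds because $G$ is finite. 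The paper's proof is shorter and stays entirely inside the two coefficient modules appearing in the statement, whereas yours buys extra information along the way --- the identification $M(G)\cong H_2(G,\Z)$, the fact that $\dim H^1(G,\F_p)$ is the $p$-rank of $G/[G,G]$, and the decomposition of $\dim H^2(G,\F_p)$ as the sum of the $p$-ranks of $G/[G,G]$ and $M(G)$ --- which makes the cancellation in the lemma transparent.
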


\begin{proof} Consider the short exact sequence 
$0 \rightarrow \F_p \rightarrow \CC^* \rightarrow \CC^* \rightarrow 0$ of $\Z G$-modules, 
where the map on $\CC^*$ is the $p$th power map.  The long exact sequence
for cohomology \cite[III.6.2]{Br} gives
$
0 \rightarrow H^1(G,\F_p) \rightarrow H^1(G,\CC^*) \rightarrow H^1(G,\CC^*)
\rightarrow H^2(G,\F_p) \rightarrow H^2(G,\CC^*) \rightarrow pH^2(G,\CC^*) \rightarrow 0$.
Thus, $|H^1(G,\F_p)||H^2(G,\CC^*)\colon \! pH^2(G,\CC^*)|=|H^2(G,\F_p )|$, 
which completes the proof. 
\end{proof}

 Serre \cite[Prop. 28]{serre} gave a proof of the final statement
for finite $p$-groups (although his proof is valid for all finite groups).
Of course, the Golod-Shavarevich Theorem implies that  
$\dim H^2(G,\F_p)- \dim H^1(G,\F_p)$ is usually very large for $p$-groups.

\begin{lemma}  \label{h2-h1 nontrivial}  Let $G$ be a finite group and 
$N\unlhd G$.   Let $V$ be an
irreducible $\F_pG$-module with $N$ trivial on $V$. 
Assume that there is no nontrivial 
$G$-equivariant homomorphism from $N$ onto $V$.  Then
\begin{enumerate}
\item  $\dim H^1(G/N,V) = \dim H^1(G,V),$
\item  $\dim H^2(G,V) \ge \dim H^2(G/N,V),$  and
\item    $\dim H^2(G,V) - \dim H^1(G,V) \ge \dim H^2(G/N,V) - \dim H^1(G/N,V)$.
\end{enumerate}
\end{lemma}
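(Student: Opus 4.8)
The plan is to run the inflation-restriction sequence (Lemma \ref{infres}) with the normal subgroup $N$ acting trivially on $V$, exactly as anticipated in the remark following that lemma. Since $N$ is trivial on $V$ we have $V^N = V$, and the five-term exact sequence reads
$$
0 \rightarrow H^1(G/N,V) \rightarrow H^1(G,V) \rightarrow \hom_{G/N}(N/[N,N],V) \rightarrow H^2(G/N,V) \rightarrow H^2(G,V).
$$
The crucial input is the hypothesis that there is no nontrivial $G$-equivariant homomorphism from $N$ onto $V$. Since $V$ is irreducible, any $G$-equivariant homomorphism $N \rightarrow V$ has image an $\F_pG$-submodule of $V$, hence is either zero or surjective; by hypothesis it cannot be a nonzero surjection, so it must be zero. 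Therefore the restriction term $\hom_{G/N}(N/[N,N],V) = H^1(N,V)^G$ vanishes.

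With that vanishing in hand the proof is essentially immediate. First, the vanishing of the restriction term splits the five-term sequence into two pieces. The left piece $0 \rightarrow H^1(G/N,V) \rightarrow H^1(G,V) \rightarrow 0$ forces $H^1(G/N,V) \cong H^1(G,V)$, giving (1). Second, once the term $\hom_{G/N}(N/[N,N],V)$ is zero the sequence yields an injection $0 \rightarrow H^2(G/N,V) \rightarrow H^2(G,V)$, so $\dim H^2(G,V) \ge \dim H^2(G/N,V)$, which is (2). Finally, (3) follows by combining (1) and (2): subtracting the equality of $H^1$-dimensions from the inequality of $H^2$-dimensions gives
$$
\dim H^2(G,V) - \dim H^1(G,V) \ge \dim H^2(G/N,V) - \dim H^1(G/N,V).
$$

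The only real point requiring care—and the step I expect to be the main (though modest) obstacle—is the identification of the restriction term $H^1(N,V)^G$ with $\hom_{G/N}(N/[N,N],V)$ and the argument that the hypothesis kills it. Here one uses that $N$ acts trivially on $V$, so $H^1(N,V) = \hom(N,V) = \hom(N/[N,N],V)$, and the $G$-invariance picks out the $G$-equivariant homomorphisms; irreducibility of $V$ then reduces ``no nontrivial homomorphism onto $V$'' to ``no nonzero homomorphism at all.'' Everything else is a formal consequence of exactness, so no genuine computation is needed.
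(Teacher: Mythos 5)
Your proof is correct and follows essentially the same route as the paper: the paper also applies the inflation--restriction sequence of Lemma \ref{infres}, with the hypothesis (plus irreducibility of $V$) forcing $\hom_G(N,V)=0$, so that (1) and (2) drop out of exactness and (3) follows by subtraction. The only difference is that you spell out the identification $H^1(N,V)^G=\hom_{G/N}(N/[N,N],V)$ and the irreducibility argument, which the paper leaves implicit.
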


\begin{proof}  (1) and (2) follow from
  the inflation restriction sequence (Lemma \ref{infres}):
$$
0 \rightarrow H^1(G/N,V) \rightarrow H^1(G,V) \rightarrow \hom_G(N,V)
\rightarrow H^2(G/N,V) \rightarrow H^2(G,V). 
$$
Then (3) follows from (1) and (2). 
\end{proof} 
  
  Finally, we state a very useful  consequence of \cite[Theorem~0.1]{lub34}:
\begin{proposition}  \label{lub0.1}
If $G$ is a finite group having a  profinite  presentation with $d$ generators 
and $r$ relations$,$ 
 then $G$ also has a profinite presentation with $d_0:=d(G)$ generators and $r_0$ relations 
for which 
$r_0-d_0\le r-d$.
\end{proposition}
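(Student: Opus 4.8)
The plan is to deduce the statement from the precise formula of \cite{lub34} for the minimal number of relations in a profinite presentation with a prescribed number of generators, and then to exploit a monotonicity property of that formula. Write $r_d(G)$ for the minimum of $d_{\widehat F}(\widehat R)$ over all profinite presentations (\ref{basic profinite}) in which $\widehat F$ is free of rank $d$; this is defined for every $d \ge d(G)$, and $\hat r(G) = \min\{r_d(G) : d \ge d(G)\}$. It suffices to prove that $d \mapsto r_d(G) - d$ is non-decreasing for $d \ge d(G)$. Granting this, if $G$ has a presentation with $d$ generators and $r$ relations then $r \ge r_d(G)$, and choosing a presentation that realizes $r_0 := r_{d(G)}(G)$ on $d_0 := d(G)$ generators gives
$$
r_0 - d_0 = r_{d(G)}(G) - d(G) \le r_d(G) - d \le r - d,
$$
which is exactly the desired conclusion.

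To compute $r_d(G)$ I would use that, in the profinite category, the least number of normal (closed) generators of $\widehat R$ equals the least number of $\Lambda$-module generators of the relation module $A := \widehat{R}^{\mathrm{ab}}$, where $\Lambda = \widehat{\Z}[[G]]$; this equality, which fails for discrete presentations, is the heart of \cite{lub34}. The module $A$ sits in the two short exact sequences of the Gruenberg resolution,
$$
0 \to I_G \to \Lambda \to \widehat{\Z} \to 0 \quad\text{and}\quad 0 \to A \to \Lambda^{d} \to I_G \to 0,
$$
where $I_G$ is the augmentation ideal and the right-hand map sends the basis to the elements $g_i - 1$ for a chosen generating $d$-tuple of $G$. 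For any prime $p$ and any irreducible $\F_p G$-module $M$, applying $\Hom_G(-,M)$ and using that $\Lambda$ is projective (so its higher cohomology vanishes), the associated long exact sequences give
$$
\dim_{\F_p}\Hom_G(A,M) = (d-1)\dim_{\F_p} M + \dim H^0(G,M) - \dim H^1(G,M) + \dim H^2(G,M).
$$
Since $M$ is irreducible, this number is the multiplicity of $M$ in the head of $A$ times $\dim_{\F_p}\End_G(M)$, so dividing by $\dim_{\F_p}\End_G(M)$ and maximizing over all $p$ and all irreducible $M$ yields the closed formula for $r_d(G) = d_\Lambda(A)$. At $d = d(G)$ this recovers (\ref{rhat}).

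The monotonicity is then immediate. The contribution of a fixed pair $(p,M)$ to $r_d(G) - d$ is
$$
\frac{(d-1)\dim_{\F_p} M + \dim H^0(G,M) - \dim H^1(G,M) + \dim H^2(G,M)}{\dim_{\F_p}\End_G(M)} - d,
$$
whose slope in $d$ equals $\dim_{\F_p} M / \dim_{\F_p}\End_G(M) - 1 = \dim_{\End_G(M)} M - 1 \ge 0$, as $M \neq 0$. Hence every such term is non-decreasing in $d$, and therefore so is their maximum $r_d(G) - d$, which is precisely what the reduction requires.

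The step I expect to be the real obstacle is the input I am taking from \cite{lub34}: the identification of the minimal normal-generation number of $\widehat R$ with the module-generation number $d_\Lambda(A)$, together with the fact that this minimum is attained by a genuine profinite presentation on $d(G)$ generators. This is exactly the feature that makes the profinite theory better behaved than the discrete one (where the analogue is the open problem recalled in the introduction), so rather than reprove it I would cite it. Once that is in hand, the remaining ingredients—the $\Hom$-computation via the Gruenberg sequences and the non-negativity of the slope $\dim_{\End_G(M)} M - 1$—are routine.
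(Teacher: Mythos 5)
Your strategy is in essence the paper's own: Proposition \ref{lub0.1} is stated there without proof, as a direct consequence of \cite[Theorem 0.1]{lub34}, and that theorem of Lubotzky is proved exactly along the lines you sketch (profinite normal generation of $\widehat R$ equals $\Lambda$-module generation of the relation module $A=\widehat{R}^{\mathrm{ab}}$ over $\Lambda=\widehat{\Z}[[G]]$, plus the Gruenberg-resolution computation). Your dimension count $\dim_{\F_p}\Hom_G(A,M)=(d-1)\dim M+\dim H^0(G,M)-\dim H^1(G,M)+\dim H^2(G,M)$ is correct. The genuine error is in the next step, where you convert this into $d_\Lambda(A)$: over the semiperfect ring $\Lambda$ the minimal number of generators of $A$ is \emph{not} the maximal multiplicity of a simple module in the head of $A$, which is what dividing by $\dim_{\F_p}\End_G(M)$ computes. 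One must divide that multiplicity by the multiplicity of $M$ in the head of the rank-one free module $\Lambda$, namely $\dim_{\End_G(M)}M$; the correct formula is $d_\Lambda(A)=\max_{p,M}\lceil \dim_{\F_p}\Hom_G(A,M)/\dim_{\F_p}M\rceil$. Your version fails already for $A=\Lambda$ itself (one generator, yet its head contains $M^n$ with $n=\dim_{\End_G(M)}M\ge 2$), and it genuinely overestimates $r_d(G)$: for $G=S_3$, $p=7$ and $M$ the two-dimensional irreducible $\F_7S_3$-module (all cohomology vanishes, $\End_G(M)=\F_7$), your expression contributes $2(d-1)-d=d-2$ to $r_d(G)-d$, so at $d=3$ it predicts $r_3(S_3)\ge 4$, whereas in truth $r_3(S_3)=3$. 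In particular your assertion that the formula ``recovers (\ref{rhat}) at $d=d(G)$'' is false as written, and the monotonicity claim, which rests entirely on that formula, is not proved.

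The fix both repairs and simplifies the argument. With the correct divisor $\dim M$, and using $\lceil x+n\rceil=\lceil x\rceil+n$ for integers $n$, one gets
$$
r_d(G)=\max_{p,M}\left\lceil\frac{(d-1)\dim M+\dim H^0(G,M)-\dim H^1(G,M)+\dim H^2(G,M)}{\dim M}\right\rceil
= d-1+\max_{p,M}\nu_2(M),
$$
so $r_d(G)-d=\max_{p,M}\nu_2(M)-1$ is \emph{constant} in $d\ge d(G)$, not merely non-decreasing; this is exactly (\ref{rhat}) for every $d$, and the proposition follows at once from the two inputs you rightly propose to cite from \cite{lub34}: the identity $d_{\widehat F}(\widehat R)=d_\Lambda(A)$ and the fact that the minimum is realized by an actual profinite presentation on $d(G)$ generators.
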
 

This is one of the tools that make profinite presentations easier to work 
with than discrete presentations --  and we do not know whether or not the discrete 
analogue holds.

\section{Direct  products} \label{dp} 

We can now prove Proposition \ref{SxT}:

\begin{proof}  We are assuming that $G$ has a profinite presentation
with $d$ generators and $r$ relations.  
By Proposition~\ref{lub0.1} we may assume that
$d$ and $r$ are both minimal.  
Assume that $X$ has a profinite presentation with  $d'=d(X) \le d$ generators   
and $r'$ relations with $r'$ minimal (again we use Proposition~\ref{lub0.1}).

By  (\ref{rhat}),  $r' - d'$ is   the maximum of
$\nu_2(M) - 1$
as $p$ ranges over all primes and  $M$ ranges over all irreducible 
$\F_pX$-modules.   Similarly,   $r-d$ is defined by the
same formula in terms of irreducible $G$-modules.

We need to show that $r'-d' \le r - d$,   and so
 it suffices to prove that 
 $$\dim H^2(G,M) - \dim H^1(G,M) \ge \dim H^2(X,M)-\dim H^1(X,M)$$
for   every prime $p$ and  every irreducible $\F_pX$-module    (where
we view $M$ as an  $\F_pG$-module with $Y$ acting trivially). 

First suppose
that $M$ is a nontrivial $\F_pX$-module. Then $M$ is not a homomorphic image of $Y$
(since $X$ acts trivially on $Y$ but not on $M$), whence Lemma \ref{h2-h1 nontrivial} implies
the desired inequality  (using $N=Y $ and $V=M$).   

If $M = \F_p$, the desired inequality follows
from Lemma \ref{schur}  
since the Schur multiplier of $X \times Y$ contains the
Schur multiplier of $X$.
\end{proof}  

This produces  an extension of  \cite[2.8]{GrK}:

\begin{corollary}  If $X \times Y$ is a proficient finite group
and $d(M(X))=d(M(X \times Y)),$  then $X$ is proficient.
\end{corollary}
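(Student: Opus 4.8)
The plan is to read this off directly from Proposition \ref{SxT}, using the defining characterization of proficiency together with the lower bound \eqn{prof ineq}. First I would unwind the hypothesis that $G=X\times Y$ is proficient: by definition this means $\hat r(G)=d(G)+d(M(G))$, and by Proposition \ref{lub0.1} the witnessing profinite presentation may be taken with exactly $d(G)$ generators. Thus $G$ has a profinite presentation with $d:=d(G)$ generators and $r:=d(G)+d(M(G))$ relations. This is the object I would feed into Proposition \ref{SxT}.

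Next I would apply Proposition \ref{SxT} with $d':=d(X)$, which produces a profinite presentation of $X$ with $d(X)$ generators and
$$
r-(d-d')=\bigl(d(G)+d(M(G))\bigr)-\bigl(d(G)-d(X)\bigr)=d(X)+d(M(G))
$$
relations. Here is where the hypothesis enters: since $d(M(X))=d(M(X\times Y))=d(M(G))$, this relation count is exactly $d(X)+d(M(X))$. Hence $\hat r(X)\le d(X)+d(M(X))$. Combining this with the general lower bound \eqn{prof ineq}, namely $\hat r(X)\ge d(X)+d(M(X))$, forces equality, which is precisely the statement that $X$ is proficient.

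There is no real analytic or cohomological obstacle here once Proposition \ref{SxT} is in hand; the only point demanding care is the role of the multiplier hypothesis. By the K\"unneth formula one always has a direct-sum decomposition of $M(X\times Y)$ containing $M(X)$ as a summand, so in general $d(M(X))\le d(M(X\times Y))$, and the relation bound coming from Proposition \ref{SxT} only gives $\hat r(X)\le d(X)+d(M(X\times Y))$. The assumption $d(M(X))=d(M(X\times Y))$ is exactly what collapses this into the tight bound $d(X)+d(M(X))$; without it one would merely recover $\hat r(X)-d(X)\le d(M(X\times Y))$, which need not certify proficiency of $X$. So the ``hard part'' is really just recognizing that the stated equality of multiplier ranks is the precise condition needed to make the generic inequality extremal, and the proof is then a two-line bookkeeping argument.
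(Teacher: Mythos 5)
Your proof is correct and is essentially the paper's own argument: both apply Proposition~\ref{SxT} to a minimal (proficient) presentation of $X\times Y$ to get $\hat r(X)-d(X)\le \hat r(X\times Y)-d(X\times Y)=d(M(X\times Y))$, then use the hypothesis $d(M(X))=d(M(X\times Y))$ together with the lower bound \eqn{prof ineq} to force equality. The only cosmetic difference is that the paper compresses this into a single chain of inequalities, while you substitute the multiplier hypothesis before invoking the lower bound; your appeal to Proposition~\ref{lub0.1} is harmless but not strictly needed, since Proposition~\ref{SxT} applies to any presentation realizing $r-d=d(M(G))$.
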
 
\begin{proof} 
By Proposition~\ref{SxT} and hypothesis, $d(M(X \times Y)) =
r(X\times Y)-d(X \times Y)\ge r(X)- d(X)\ge d(M(X)) =d(M(X \times Y))  $.
\end{proof}  
 
We do not know whether or not the previous two results hold for
discrete presentations. 
One can extend the corollary by using the same argument
as in the proof of  Proposition~\ref{SxT}  to show
the following:

\begin{corollary}  Let $G$ be a finite group with a normal subgroup
$Y,$  and let $X:=G/Y$.  Assume that there is no $G$-equivariant homomorphism
from $Y$ to a nontrivial irreducible $X$-module.  
Then $\hat{r}(X) - d(X) \le \max \{\hat{r}(G) -d(G) , d(M(X)\}$.

In particular, if $G$ is proficient and $d(M(X))=d(M(G))$,
then $X$ is proficient.
\end{corollary}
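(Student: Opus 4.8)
The plan is to run the argument of Proposition~\ref{SxT} in the slightly more general setting of a quotient $X = G/Y$ rather than a direct factor; this works because the only feature of $Y$ that was really exploited there is the absence of equivariant maps onto nontrivial irreducibles. By (\ref{rhat}), $\hat{r}(X) - d(X) = \max_{p,M}\nu_2(M) - 1$, where $p$ ranges over all primes and $M$ over all irreducible $\F_p X$-modules. First I would split this maximum according to whether $M$ is trivial or nontrivial and bound each part separately against one of the two terms on the right-hand side of the claim.

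For the trivial module $M = \F_p$, the quantity $\nu_2(\F_p) - 1$ is the rank of the Sylow $p$-subgroup of $M(X)$ by Lemma~\ref{schur}, so the contribution of the trivial modules to the maximum is exactly $\max_p(\nu_2(\F_p) - 1) = d(M(X))$ by (\ref{nuschur}). This accounts for one of the two terms in the asserted bound, with no further work.

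For a nontrivial irreducible $\F_p X$-module $M$, I would inflate $M$ to $G$ along $G \to X$; then $Y$ acts trivially on $M$, and $M$ stays irreducible as an $\F_p G$-module since its $G$-submodules are precisely its $X$-submodules. The hypothesis that there is no $G$-equivariant homomorphism from $Y$ into a nontrivial irreducible $X$-module says exactly that $\hom_G(Y, M) = 0$, which is the input required by Lemma~\ref{h2-h1 nontrivial} with $N = Y$ and $V = M$. Part~(3) of that lemma then gives $\dim H^2(G,M) - \dim H^1(G,M) \ge \dim H^2(X,M) - \dim H^1(X,M)$. Because $M$ is nontrivial, $H^0(G,M) = M^G = M^X = H^0(X,M) = 0$, so both values of $\nu_2(M)$ (over $G$ and over $X$) are the ceilings of the respective numerators above; hence $\nu_2(M)$ computed over $X$ is at most $\nu_2(M)$ computed over $G$, which by (\ref{rhat}) applied to $G$ is at most $\hat{r}(G) - d(G) + 1$. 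Thus the nontrivial modules contribute at most $\hat{r}(G) - d(G)$, and combining the two cases yields $\hat{r}(X) - d(X) \le \max\{\hat{r}(G) - d(G), d(M(X))\}$.

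For the ``in particular'' clause I would substitute $\hat{r}(G) - d(G) = d(M(G))$ (proficiency of $G$) together with $d(M(X)) = d(M(G))$, collapsing the maximum to $d(M(X))$ and giving $\hat{r}(X) - d(X) \le d(M(X))$; since (\ref{prof ineq}) supplies the reverse inequality, $X$ is proficient. There is no genuine obstacle here, as the proof is the template of Proposition~\ref{SxT} with the normal subgroup $Y$ in place of a direct factor; the only points demanding care are checking that the corollary's hypothesis is exactly the vanishing $\hom_G(Y,M) = 0$ needed to invoke Lemma~\ref{h2-h1 nontrivial}, that inflation preserves irreducibility and forces $H^0 = 0$ so the two ceiling expressions compare term by term, and that the trivial-module case is isolated cleanly and evaluated through Lemma~\ref{schur}.
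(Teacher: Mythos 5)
Your proposal is correct and follows exactly the route the paper intends: the paper proves this corollary by the phrase ``use the same argument as in the proof of Proposition~\ref{SxT},'' and you carry out precisely that argument, invoking Lemma~\ref{h2-h1 nontrivial} (with $N=Y$) for nontrivial inflated modules and Lemma~\ref{schur}/(\ref{nuschur}) for the trivial modules, which is exactly the adaptation needed since the Schur-multiplier containment used in Proposition~\ref{SxT} is no longer available and is replaced by the $d(M(X))$ term in the maximum.
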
 


\noindent
\emph{Proof of} Theorem~\ref{altp+2}(a).     By \cite[Corollary 3.13(ii)]{GKKL3}, 
for any   $p \equiv 2 \mod 3$  there is a group $G$
of index $2$ in $S_{p+2} \times \AGL(1,p)$ surjecting onto both factors
that has a presentation with $2$ generators and $3$ relations.  
Let $Y$ be the kernel of the projection of $G$ onto $S_{p+2}$.
Now apply the preceding corollary. 
\qed

\medskip

Similarly, we can also prove parts of Theorem~\ref{altp+2}(b).
Let $T$ be the subgroup of  index $2$ in $\AGL(1,p)$
with $p \equiv 11 \mod12$. 
By \cite[Corollary 3.8(i)]{GKKL3}, $A_{p+2}
\times  T $ has a presentation with $2$ generators and $3$
relations.  By  Proposition \ref{SxT}, it follows that
$A_{p+2}$  a profinite presentation with $2$ generators and $3$
relations.  The remainder of Theorem~\ref{altp+2}(b) will be proved
in \S4, and in \S6  we will prove the remaining parts of 
Theorem~\ref{altp+2}
(e.g.,   primes  $p \equiv 3 \mod4$ are dealt with  in  Theorem~\ref{H2 estimate Alt Sym}).  
 
 The next result is a special case of a result  about direct products
in \cite[2.7]{GrK}.

\begin{lemma} \label{3.3}\label{h11}
 Let $G_i, 1 \le i \le t$   $($for $t\ge 2),$  be finite perfect
groups each of which has a presentation
with $2$ generators and $r_i$ profinite relations. 
Set $X = \prod_{i=1}^t G_i$.   Then
\begin{equation} \label{B} 
  \hat r (X)-d(X)\le \max\{d(M(X)),r_i-2\mid i=1,\dots,t\}.
\end{equation}
    In particular$,$  if
$d(M(X)) \ge \max_i r_i-2 $  then $X$ is proficient.
\end{lemma}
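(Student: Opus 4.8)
The plan is to evaluate $\hat r(X)-d(X)$ through its cohomological description \eqref{rhat}, so that the whole problem reduces to bounding $\nu_2(M)$ as $M$ ranges over the irreducible $\F_pX$-modules and $p$ over all primes. For the trivial modules $M=\F_p$, equation \eqref{nuschur} says their contribution to $\max_{p,M}\nu_2(M)-1$ is exactly $d(M(X))$. Hence the entire task is to show that $\nu_2(M)-1\le\max_i(r_i-2)$ for every \emph{nontrivial} irreducible $M$. Granting this, \eqref{B} follows immediately, and the final clause is automatic: \eqref{prof ineq} always gives $\hat r(X)-d(X)\ge d(M(X))$, so the displayed inequality becomes an equality --- i.e.\ $X$ is proficient --- as soon as $d(M(X))\ge\max_i(r_i-2)$.

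To handle a nontrivial $M$, I first pass to an algebraically closed field. Because cohomology commutes with the flat base change ${-}\otimes_{\F_p}\overline{\F}_p$, the dimensions $\dim H^i(X,M)$ and $\dim M$ can be read off from $M\otimes_{\F_p}\overline{\F}_p$; and since the irreducible $\overline{\F}_pX$-constituents of the latter are Galois-conjugate, they share a single value of $\nu_2$, equal to $\nu_2(M)$. So I may assume $M=M_1\otimes\cdots\otimes M_t$ is an (external) tensor product of irreducible $\overline{\F}_pG_i$-modules $M_i$, and apply the Künneth formula $H^n(X,M)=\bigoplus_{n_1+\cdots+n_t=n}\bigotimes_iH^{n_i}(G_i,M_i)$. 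Two observations organize the bookkeeping. Writing $I=\{i:M_i\text{ nontrivial}\}$, which is nonempty, each $M_i$ with $i\in I$ is nontrivial irreducible, so $H^0(G_i,M_i)=M_i^{G_i}=0$ and every nonzero Künneth summand must have $n_i\ge1$ for all $i\in I$; and for $i\notin I$ the group $G_i$ is perfect, so $H^1(G_i,\F_p)=0$.

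The case analysis on $|I|$ is then short. If $|I|\ge3$, a total degree $n=2$ cannot satisfy $n_i\ge1$ for all $i\in I$, so $H^2(X,M)=0$ and $\nu_2(M)\le0$. If $|I|=1$, say $I=\{i\}$, the surviving low-degree terms collapse to $H^n(X,M)\cong H^n(G_i,M_i)$ for $n\le2$ (the other degree-$\le2$ terms vanish since $H^0(G_i,M_i)=0$ and $H^1(G_j,\F_p)=0$ for $j\ne i$), whence $\nu_2(M)=\nu_2^{G_i}(M_i)$, the invariant $\nu_2$ formed inside $G_i$; and since $G_i$ has a presentation with $2$ generators and $r_i$ relations, \eqref{rhat} applied to $G_i$ gives $\nu_2^{G_i}(M_i)\le r_i-1$, so $\nu_2(M)-1\le r_i-2$. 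The substantive case is $|I|=2$, say $I=\{i,j\}$: the only surviving degree-$2$ term is $H^1(G_i,M_i)\otimes H^1(G_j,M_j)$, so $\dim H^2(X,M)=\dim H^1(G_i,M_i)\cdot\dim H^1(G_j,M_j)$ while $\dim M=\dim M_i\cdot\dim M_j$.

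The main obstacle is therefore to bound $\dim H^1(G_i,M_i)$ in this last case, and this is exactly where $2$-generation enters. A nontrivial finite perfect group needs at least two generators, and each $G_i$ has a $2$-generator presentation, so $d(G_i)=2$; embedding $G_i$ in a free group $F$ of rank $2$, the inflation map is injective (the first arrow of Lemma \ref{infres} for $R=\ker(F\to G_i)$, which acts trivially on $M_i$), giving $H^1(G_i,M_i)\hookrightarrow H^1(F,M_i)$. Counting derivations modulo principal ones yields $\dim H^1(F,M_i)=(2-1)\dim M_i+\dim M_i^{G_i}=\dim M_i$, using $M_i^{G_i}=0$. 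Thus $\dim H^1(G_i,M_i)\le\dim M_i$ for both indices, hence $\dim H^2(X,M)\le\dim M$ and $\nu_2(M)\le1$, so $\nu_2(M)-1\le0\le\max_i(r_i-2)$. Assembling the three cases gives $\nu_2(M)-1\le\max_i(r_i-2)$ for every nontrivial $M$, which together with the trivial-module contribution $d(M(X))$ establishes \eqref{B} and the proficiency conclusion.
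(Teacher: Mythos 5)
Your proof is correct and follows essentially the same route as the paper's: reduce via \eqref{rhat} and \eqref{nuschur} to bounding $\nu_2(M)$, pass to a splitting field, decompose $M$ as a tensor product, and run the K\"unneth case analysis on the number of nontrivial factors, using $2$-generation to get $\dim H^1(G_i,M_i)\le\dim M_i$. The only differences are that you prove from scratch (Galois conjugacy for the base change, the free-group derivation count for the $H^1$ bound) what the paper cites from \cite{GKKL2}; note also the slip of phrase ``embedding $G_i$ in a free group'' --- you mean $G_i$ is a quotient of $F$, and your actual inflation argument is the right one.
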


\begin{proof} 
Recall by   (\ref{rhat})  that
\begin{equation} \label{A} 
r_i-2\ge \max_{p,N}\nu_2(N)-1,
\end{equation}
where $p$ runs over all primes and $N$ over all irreducible $\F_pG$-modules.
We know from (\ref{rhat})   that
$\hat r(X)-d(X)=\max_{p,N}\nu_2(M)  -1 $,
where $M$ runs over all irreducible $\F_pX$-modules.
Thus, we have to prove that, for every 
such $M$,
\begin{equation} \label{B0} 
\nu_2(M) \le  \max\{d(M(X))+1, r_i-1 \mid i=1,\dots,t\}. 
\end{equation}

If $M$ is the trivial module then, by (\ref{nuschur}), $\nu_2(M)\le d(M(X))+1$.  
So assume that $M$ is nontrivial.   By \cite[Lemma 3.2]{GKKL2}, 
we may consider modules over a splitting field $F$ for
$X$.  Then $M=\bigotimes_{i=1}^tM_i$ for irreducible 
$FG_i$-modules $M_i$.  
If at least  3 of the $M_i$ are nontrivial then, by the K\"unneth formula 
(cf. \cite[Lemma~3.1]{GKKL2}), 
$H^2(X,M)=0=H^1(X,M)$ and (\ref{B}) holds.

If exactly 2  of the $M_i$, say $M_1$ and $M_2$,  are nontrivial then 
$H^1(X,M)=0$ and   $H^2(X,M)\cong H^1(G_1,M_1) \otimes  H^1(G_2,M_2)$, again by 
the K\"unneth formula.  As the $G_i$ are 2-generated, $\dim H^1(G_i,M_i)\le 
\dim M_i$  and hence  $\dim H^2(X,M)\le 
\dim M$ and $\nu_2(M)\le 1$, so that (\ref{B}) again  holds.

Finally, if only $M_1$ is nontrivial, then the K\"unneth formula gives
$H^2(X,M)\cong H^2(G_1,M_1), H^1(X,M)\cong H^1(G_1,M_1)$ since 
$\dim H^0(G_i,M_i)=1$ and $H^1(G_i,M_i)=0$ for $i>1$.  Thus, this time
$\nu_2(M)=\nu_2(M_1). $  By  (\ref{A}),   $ \nu_2(M_1)\le r_1 -1$, and so again 
  (\ref{B})  holds.
\end{proof}

{\noindent \em Proof of} Theorem \ref{theorem 18}.
By \cite[Theorem B]{GKKL2}, every finite quasisimple group 
$G$ has a profinite  presentation
with $2$ generators and $18$ relations.    
In particular, $r - d(G) \le 16$ for any
finite quasisimple group.
Similarly, by \cite[Theorem D]{GKKL2},
every alternating group has a profinite presentation with 
$2$ generators and $4$
relations.  Also, $\SL(2,q)$ has a profinite
presentation with $2$ generators and $2$ relations
by \cite[\S7]{GKKL2}.  Thus, Theorem \ref{theorem 18} follows
from Lemma~\ref{3.3}.  \qed

\section{Some discrete presentations}  
\label{discrete}

  Carmichael \cite{car} 
proved  that $A_{n+2}$ has a presentation 
\begin{equation}
\label{carmichael}
\langle x_1, \ldots, x_n \mid  x_i^3=1, (x_ix_j)^2 = 1, 1 \le i \ne j \le n \rangle.
\end{equation}
We first observe that this can be modified to give a presentation for the
double cover $2A_{n+2} $:

\begin{proposition} \label{double cover}  If $n \ge 3$ and
$J = \langle x_1, \ldots, x_n \mid x_i^3=(x_ix_j)^2, 
1 \le i \ne j \le n \rangle , $
then $J\cong 2A_{n+2}$.
\end{proposition}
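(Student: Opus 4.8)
The plan is to exhibit $z:=x_1^3$ as a central element of order dividing $2$, to identify $J/\langle z\rangle$ with $A_{n+2}$ via Carmichael's presentation~\eqref{carmichael}, and then to map $J$ onto the genuine double cover in order to see that $z\ne 1$; an order count then forces $J\cong 2A_{n+2}$.

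First I would show that all the cubes coincide and are central. Conjugating the defining relation gives $(x_jx_i)^2=x_i^{-1}(x_ix_j)^2x_i$, and since $(x_ix_j)^2=x_i^3$ while $x_i^3$ commutes with $x_i$, this yields $x_j^3=(x_jx_i)^2=x_i^{-1}x_i^3x_i=x_i^3$. Hence $x_i^3=(x_ix_j)^2=z$ for all $i\ne j$, and because $z=x_k^3$ for every $k$ it commutes with each generator, so $z\in Z(J)$. Rewriting the relation as $x_jx_ix_j=x_i^{-1}\cdot x_i^3=x_i^2$ gives the braid-type identities $x_jx_ix_j=x_i^2$ and $x_ix_jx_i=x_j^2$. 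I would then compute $x_1^4$ in two ways: directly, $x_1^4=x_1^3x_1=zx_1$; and, by repeatedly substituting $x_1^2=x_2x_1x_2$ and $x_2^2=x_1x_2x_1$ and collecting the central factors $x_1^3=x_2^3=z$, rearranging $x_1^4$ into $z^3x_1$. Comparing the two expressions forces $z^3=z$, i.e.\ $z^2=1$. (This uses only the relations on $x_1,x_2$.)

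Next, adjoining the relation $z=1$ turns the presentation into $x_i^3=1$, $(x_ix_j)^2=1$, which is exactly Carmichael's presentation~\eqref{carmichael} of $A_{n+2}$. As $z$ is central, $\langle z\rangle\lhd J$ and $J/\langle z\rangle\cong A_{n+2}$; combined with $z^2=1$ this already gives $|J|=|\langle z\rangle|\cdot|A_{n+2}|\le 2|A_{n+2}|$.

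Finally I would show $z\ne 1$ by mapping $J$ onto $2A_{n+2}$. Take the generators underlying~\eqref{carmichael} to be the $3$-cycles $(1,2,i+2)$, and let $\hat x_i$ be the lift to $2A_{n+2}$ of order $6$, so that $\hat x_i^3$ is the central involution $\zeta$. Since $\hat x_i^3=\zeta$ for every $i$, the $\hat x_i$ generate $2A_{n+2}$, and $x_i\mapsto\hat x_i$ respects the relations provided $(\hat x_i\hat x_j)^2=\zeta$. This last point is the crux, and the step I expect to be the main obstacle: $\hat x_i\hat x_j$ is a lift of the double transposition $(1,2,i+2)(1,2,j+2)$, and one must verify that such a lift has order $4$ (equivalently squares to $\zeta$) rather than order $2$. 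This is a standard feature of the spin double cover and can be checked from known relations for $2A_{n+2}$ or in a concrete model. Granting it, $x_i\mapsto\hat x_i$ defines a surjection $J\twoheadrightarrow 2A_{n+2}$ carrying $z$ to $\zeta\ne 1$, so $z\ne 1$, whence $|\langle z\rangle|=2$ and $|J|=2|A_{n+2}|=|2A_{n+2}|$. A surjection of finite groups of equal order is an isomorphism, giving $J\cong 2A_{n+2}$.
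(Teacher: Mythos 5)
Your proposal is correct, and its outer skeleton matches the paper's proof: both show $z:=x_1^3$ is central of order dividing $2$, identify $J/\langle z\rangle$ with $A_{n+2}$ via Carmichael's presentation (\ref{carmichael}), and produce a surjection of $J$ onto $2A_{n+2}$ (sending each $x_i$ to an order-$6$ lift of a $3$-cycle) which, together with the count $|J|\le 2|A_{n+2}|$, forces an isomorphism. Where you genuinely differ is the middle step. The paper proves $z^2=1$ structurally: with $Q=\langle x_1,x_2\rangle$ it checks that $w=x_1^3$ is central in $Q$ and lies in $[Q,Q]$, gets $Q/\langle w\rangle\cong A_4$ from (\ref{carmichael}), so $Q$ is a stem extension (``cover'') of $A_4$; since $M(A_4)\cong \Z/2$ and $Q$ maps onto $\SL(2,3)$, it concludes $Q\cong\SL(2,3)$, whence $x_1$ has order $6$, $x_2^3=x_1^3$, and then $w$ is a central involution of all of $J$. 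You instead stay entirely inside the presentation: conjugating the relation gives $x_j^3=(x_jx_i)^2=x_i^{-1}(x_ix_j)^2x_i=x_i^3$, so all cubes coincide and are central, and your sketched rewriting of $x_1^4$ does close up, e.g.
$$
x_1^4=(x_2x_1x_2)^2=x_2x_1x_2^2x_1x_2=x_2x_1^2x_2x_1^2x_2=x_2^2x_1x_2^3x_1x_2^2=z\,x_2^2x_1^2x_2^2=z\,x_2^3x_1x_2^3=z^3x_1,
$$
which against $x_1^4=zx_1$ gives $z^2=1$. Your route buys elementarity and self-containedness: no covering-group theory for $A_4$ is needed, centrality of $z$ in $J$ comes for free rather than pair by pair, and the surjection onto $2A_{n+2}$ is needed only once, at the very end (the paper needs it earlier as well, to rule out $Q\cong A_4$). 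The paper's route avoids word computations and yields the extra structural fact $\langle x_i,x_j\rangle\cong\SL(2,3)$, paralleling its later computer-assisted identification of $\langle x,y,z\rangle\cong\SL(2,5)$ in Proposition~\ref{doublexT}. Finally, both arguments lean on the same unproved-but-standard fact, which you rightly isolate as the crux: a lift to $2A_{n+2}$ of a product of two disjoint transpositions has order $4$; the paper asserts this in one sentence, exactly as you propose to do.
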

\begin{proof}   There is a surjection  $\phi\colon J \to 2A_{n+2}$ sending
$x_i$ to the element $(i ,  n+1 ,  n+2)z$ of order 6  for $1 \le i \le n$, where $z$ is the 
central involution in $2A_{n+2}$.  Namely, $x_ix_j$ is a product of two disjoint 
transpositions as an element of $A_{n+2}$, and hence has order 4 in $2A_{n+2}$.

Set $w:=x_1^3 =(x_1x_2)^2$  and  $Q := \langle x_1, x_2 \rangle \le J$.
Then $\phi(Q)=2A_4=\SL(2,3)$.
Since $w$ commutes with $x_1$ and $x_1x_2$ it 
is central in $Q$.  Also, in $Q/[Q,Q]$  we  have  $x_1 \equiv x_2^2$
and $x_2 \equiv x_1^2$, and hence  $x_1\equiv x_1^4$, so that $w = x_1^3 \in [Q,Q]$.
Now $Q/\langle w \rangle$ is generated by $2$ elements of order
$3$ whose product is an involution.  
By  (\ref{carmichael}),  $Q/\langle w \rangle \cong A_4$.  
Thus, $Q$ is a cover of $A_4$ and so $Q \cong \SL(2,3)$.
In particular, $x_1$ has order $6$ and $x_2^3=x_1^3$.

Consequently,   $w=x_i^3$ for all $i$ and so   $w$  is a central
involution  in $J$.  Also, $w$    is contained in $[Q,Q] \le [J,J]$.
By  (\ref{carmichael}), $J/\langle w \rangle$
is a homomorphic image of $A_{n+2}$, and so is isomorphic to $A_{n+2}$. 
Thus   $J\cong 2A_{n+2}$.
\end{proof}

We use this to give a   presentation for a group having as a direct factor 
the double
cover of  a suitable alternating group  (cf. \cite[Corollary~3.8(i)]{GKKL3}):
 
\begin{proposition} \label{doublexT} 
For a prime   $p \equiv 11 \mod 12 , $  let    
$$J:=\langle g, u \mid 
 u^p=v^{ (p-1)/2 }, (u ^s)^v=u ^{s-1}
 , (ww^u )^2 =w^3 \rangle,$$
where  $s(e-1)\equiv -1  \mod p$ for an   integer  $e$  having multiplicative order 
$(p-1)/2  \mod  p$,  while $v: = g^{6}$  and $w: = g^{(p-1)/2}$.   
Then $J \cong 2A_{p+2} \times T , $
where $T$ is the subgroup of index $2$ in $\AGL(1,p)$.
\end{proposition}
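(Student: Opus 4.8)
The plan is to identify $J$ with $2A_{p+2}\times T$ by producing a surjection onto this group and then proving $|J|\le|2A_{p+2}\times T|$. First I would construct $\phi\colon J\to 2A_{p+2}\times T$. Write $T=\F_p\rtimes\langle e\rangle$ with $u$ acting as the translation $x\mapsto x+1$ and the complement acting by $x\mapsto ex$, and realize $2A_{p+2}=\langle\hat x_0,\hat c\rangle$, where $\hat x_0$ is an order-$6$ lift of the $3$-cycle $(0,p+1,p+2)$ and $\hat c$ is an order-$p$ lift of $(0,1,\dots,p-1)$, following (\ref{carmichael}) and Proposition~\ref{double cover}. Since $p\equiv 11\pmod{12}$ gives $(p-1)/2\equiv -1\pmod 6$, so $\gcd(6,(p-1)/2)=1$, the map $a\mapsto(a^{(p-1)/2},a^{6})$ identifies a cyclic group of order $3(p-1)$ with $\langle\hat x_0\rangle\times\langle\hat\mu_e\rangle$, where $\hat\mu_e$ is the lift of the even permutation $x\mapsto ex$; this lets me choose $a\in 2A_{p+2}$ with $a^{(p-1)/2}=\hat x_0$ and $a^{6}=\hat\mu_e$. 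I then set $\phi(g)=(a,b)$, with $b$ a generator of the complement of $T$ satisfying $b^{6}\colon x\mapsto ex$, and $\phi(u)=(\hat c,\tau)$ with $\tau\colon x\mapsto x+1$; because $|T|$ is odd the preimage of $\langle c,\mu_e\rangle$ in $2A_{p+2}$ splits, so the lifts can be chosen with $\hat\mu_e^{-1}\hat c\hat\mu_e=\hat c^{\,e}$. With these choices $v=g^{6}$ sends $u\mapsto u^{e}$ in both coordinates, so $(u^{s})^{v}=u^{se}=u^{s-1}$ exactly because $s(e-1)\equiv -1\pmod p$; the relation $u^{p}=v^{(p-1)/2}$ holds since both sides are trivial, and $(ww^{u})^{2}=w^{3}$ is the identity $x_0^{3}=(x_0x_1)^{2}$ inside $2A_{p+2}$ supplied by Proposition~\ref{double cover}. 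Surjectivity follows from Goursat's lemma (the perfect factor $2A_{p+2}$ and the factor $T$ share no common quotient), so $|J|\ge|2A_{p+2}\times T|$.

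Next I would split $J$. Put $x_i:=w^{u^{i}}$ and $N:=\langle x_i:i\in\Z\rangle$. The element $z:=u^{p}=v^{(p-1)/2}=g^{3(p-1)}$ commutes with $u$ and with $g$, hence is central; therefore conjugation by $u$ permutes $x_0,\dots,x_{p-1}$ cyclically, while $v=g^{6}$ commutes with $w$ and conjugates $u$ to $u^{e}$, so it sends $x_i\mapsto x_{ie}$. Since $J=\langle u,v,w\rangle$, this gives $N\trianglelefteq J$. As $z=w^{6}\in N$, modulo $N$ we have $\bar u^{p}=\bar v^{(p-1)/2}=1$, so the first two relations present $J/N$ as a quotient of $T=\langle u,v\mid u^{p}=1,\ v^{(p-1)/2}=1,\ v^{-1}u^{s}v=u^{s-1}\rangle$, whence $|J/N|\le|T|$. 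It then remains to prove $N\cong 2A_{p+2}$, for then $|J|=|N|\,|J/N|\le|2A_{p+2}|\,|T|$, and together with $\phi$ this forces $\phi$ to be an isomorphism.

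For $N\cong 2A_{p+2}$ I would argue as follows. Conjugating $(ww^{u})^{2}=w^{3}$ by the powers of $u$ and of $v$ yields $(x_ix_j)^{2}=x_i^{3}$ whenever $j-i$ is a nonzero square. Killing the normal closure $\langle\langle w^{3}\rangle\rangle$ turns this relation into $(ww^{u})^{2}=1$, so by \cite[Corollary~3.8(i)]{GKKL3} the quotient $J/\langle\langle w^{3}\rangle\rangle$ is $A_{p+2}\times T$; in particular the images $\bar x_i$ satisfy the full Carmichael relations (\ref{carmichael}). Lifting that derivation back to $J$ and keeping track of the central contributions, Proposition~\ref{double cover} shows that the $x_i$ satisfy $x_i^{3}=(x_ix_j)^{2}$ for all $i\ne j$, so $N$ is a homomorphic image of $2A_{p+2}$ and $|N|\le|2A_{p+2}|$, completing the count.

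The step I expect to be the main obstacle is precisely this last one: upgrading the single relation $(ww^{u})^{2}=w^{3}$, together with its square-difference conjugates under $T$, to the complete relation set of Proposition~\ref{double cover} --- equivalently, showing that $w^{3}$ is central of order exactly $2$ rather than normally generating something larger. The obstruction is that, since $-1$ is a non-square for $p\equiv 3\pmod 4$, the $T$-action only reaches differences $j-i$ lying in the squares, so neither the non-square relations nor the centrality of the cubes $x_i^{3}$ follows formally from the three relations in isolation; this is exactly the combinatorial content imported from \cite{GKKL3}, with every ``$=1$'' there now carrying a factor of the central involution $w^{3}$. A convenient way to control that element is to note $J^{\rm ab}\cong C_{(p-1)/2}$ --- the second relation kills $u$ and then the third kills $w$ in the abelianization --- so that $w^{3}\in[J,J]$; once $\langle\langle w^{3}\rangle\rangle$ is known to be central, the extension $J\to A_{p+2}\times T$ is a stem extension, whence $|\langle\langle w^{3}\rangle\rangle|\le|M(A_{p+2}\times T)|=2$, matching the lower bound coming from $\phi$ and pinning $J\cong 2A_{p+2}\times T$.
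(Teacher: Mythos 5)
Your overall architecture matches the paper's: construct a surjection $\phi\colon J\to 2A_{p+2}\times T$, show $N:=\langle w^{u^i}\rangle$ is normal with $J/N$ a quotient of $T$, and reduce everything to bounding $|N|$ by $|2A_{p+2}|$. Those parts of your argument are essentially sound (the paper identifies $\langle u,v\rangle$ with $T$ via Neumann's presentation and writes $J=NT$ rather than passing to $J/N$, but this is cosmetic). However, there is a genuine gap at exactly the point you flag as ``the main obstacle'': you never prove that $w^3$ is central (equivalently, that all the cubes $x_i^3$ coincide, equivalently that the square-difference relations upgrade to the full relation set of Proposition~\ref{double cover}). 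Your proposed endgame --- $w^3\in[J,J]$, hence $\langle\langle w^3\rangle\rangle$ is a stem kernel of size at most $|M(A_{p+2}\times T)|=2$ --- is valid only \emph{conditional} on centrality, and as you yourself observe, centrality does not follow formally from the three relations and their $T$-conjugates: $x_0^3=(x_0x_1)^2$ makes $x_0^3$ commute with $x_0$ and $x_1$, and conjugation reaches only pairs whose difference is a square, so commutation of $x_0^3$ with $x_j$ for $j$ a nonsquare is inaccessible this way. Acknowledging an obstruction is not the same as overcoming it, so the proof is incomplete.

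The paper closes this gap with a specific device you are missing. Choose an integer $k$ such that both $-k$ and $k-1$ are nonzero squares mod $p$, and set $x:=w$, $y:=w^u$, $z:=w^{u^k}$. The choice of $k$ puts the ordered pairs $(w^{u^k},w)$ and $(w^u,w^{u^k})$ in the $T$-orbit of $(w,w^u)$, so $x,y,z$ satisfy the cyclically symmetric relations \eqref{xyz}. The crucial input --- verified by GAP and Magma --- is that the abstract group presented by \eqref{xyz} is $\SL(2,5)$, in which $x^3=y^3$ is the unique involution; hence $(w^u)^3=w^3=(ww^u)^2=(w^uw)^2$ holds in $J$. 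Once all cubes agree, $2$-homogeneity of $T$ on $\Omega=w^T$ (using $p\equiv 3\bmod 4$) conjugates this symmetric relation to every pair, so $N$ is a quotient of the group of Proposition~\ref{double cover}, i.e.\ of $2A_{p+2}$, and the order count finishes as you intended. Note also that centrality of $w^3$ and the bound $|\langle\langle w^3\rangle\rangle|\le 2$ then come out as \emph{consequences} of this identification rather than serving as inputs; without the $\SL(2,5)$ computation (or some equivalent argument) your stem-extension shortcut has nothing to stand on. A secondary, smaller issue: your identification $J/\langle\langle w^3\rangle\rangle\cong A_{p+2}\times T$ assumes that \cite[Corollary 3.8(i)]{GKKL3} is literally this presentation with $(ww^u)^2=1$ in place of $(ww^u)^2=w^3$; the paper's proof avoids leaning on that by working directly from Carmichael's presentation \eqref{carmichael} via Proposition~\ref{double cover}.
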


\begin{proof}
Throughout this section we view $A_p$ as acting on $\F_p$ and 
$A_{p+2}$ as acting on   $\F_p\cup \{ p+1  ,  p+2\}$.
By \cite{bn},     
\begin{equation}
\label{Neumann}
T = \< u_0 ,v_0  \mid u_0 ^p=v_0 ^{ (p-1)/2 }, (u_0 ^s)^{v_0}=u_0 ^{s-1} \>,
\end{equation}
where $u_0 $ corresponds to $x\mapsto x+1$ and $v_0 $ corresponds to $x \mapsto  ex$, 
acting on $\F_p$.

We first show that there is a surjection 
$\phi\colon J \to 2A_{p+2}  \times T$.
Let  $z$  be
the central involution of $2A_{p+2} $.
 Write $T = \langle u_0 , v_0  \rangle$ as above; since it has odd order, its preimage in $2A_p$ 
 has a subgroup we can identify with $T$.
Consequently, we can view $T < 2A_{p+2} $ with   $T$ fixing 
$p+1$ and $ p+2$, while $v_0 $ fixes $0$ as well.
Now define $\phi$ by  $\phi(u) =  (u_0 ,u_0 )$ and $\phi(g)=(v_0 g_0 , v_0 )$, where $g_0: =(0 ,p+1 , p+2)z
\in 2A_{p+2}$
has order 6 and commutes with $v_0$.
Since  $g^6 \mapsto(v_0 ^6,v_0 ^6)$ and  $|v_0^6|=|v_0|$ (recall that $p \equiv 11 \mod 12$),  
this embeds $T$ diagonally
into $\phi(J)$; and since  
$g^{p-1} \mapsto(g_0 ^{ - 1},1)$  $\phi$ is  a surjection.  

Now consider the group $J$.
By (\ref{Neumann})  we can identify $T$ with the subgroup
$\langle u ,v \rangle$.  Since   $v$ centralizes $w$, $\Omega:=w^T$ has size at most $p$, 
and hence the size is $p$ since  $|\phi(\Omega) |\ge p$.
Thus,
$T$ acts on $\Omega$ as it does on  $\F_p$.  In particular, since $p \equiv 3 \mod 4$,  $T$
acts transitively on the 2-element subsets of $\Omega $.

There is   an integer  $k $    such that   $-k$ and $ k - 1 $ are nonzero  
 squares mod~$p$.
We claim that    $x:=w, y:=w^{u} $  and  $  z:=w^{u^{k}}$  satisfy the relations
\begin{equation}\label{xyz}
x^6\! =y^6\! =z^6\! =1, x^3\! =(xy)^2\! =(yx)^2,  y^3\! =(yz)^2\! =(zy)^2, z^3\! =
(zx)^2\! =(xz)^2.%
\end{equation}
The first 3  of these  follow from  $w^6=1$,  
which holds since 
  $ v^{(p-1)/2}=1$  by (\ref{Neumann}).
Moreover, in view of the last relation defining   $J$,  $w$ centralizes $w^3=(w w^u)^2$   
and conjugates  
  $(w w^u)^2$  to $( w^u w)^2$, so $(xy)^2 =( yx)^2$.  
 Note that $T$ has an element sending the ordered pair $(w,w^u)$ to 
$(w^{u^i}, w^{u^j}) $ if and only if $j-i$ is a (nonzero) square mod~$p$.  
In view of our choice of $k$, 
$(w^{u^k}, w) $ and $(w^{u}, w^{u^k}) $ are both in the $T$-orbit of $(w,w^u)$.  
This proves  the last 2 relations in 
 (\ref{xyz}). 
 
The group $\<x,y,z\>$ given by  the   relations  (\ref{xyz}) is isomorphic to $\SL(2,5)=2A_5$;
this was checked  using GAP (by A. Hulpke) and using Magma.
Thus,  $x^3=y^3$ is the unique involution in $\SL(2,5)$,  so that  
$  (w^u)^3=w^3=(ww^u)^2=(w^uw)^2 $  in $J$.

Since $T$ is 2-homogeneous on $\Omega $,  the preceding proposition
now implies that 
$N:=\< \Omega\>\cong  2A_{p+2}$.   
Clearly, $T$ and $w$ normalize $N$, whence $N$ is normal in $J$.
So $J= NT$ and hence $|J| \le |2A_{p+2} ||T|$, as required.
\end{proof}

Since either $T$ or $T \times \mathbb{Z}/2$ can be generated by
a single conjugacy classe, we can add one extra relation to obtain: 

\begin{corollary}  
\label{2altp+2}
For any prime $p \equiv 11 \mod 12 , $   both
  $A_{p+2}$ or   $2 A_{p+2}$  have presentations
with $2$ generators and $4$ relations.
\end{corollary}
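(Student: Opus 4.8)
The plan is to obtain both groups as one-relation quotients of the group $J\cong 2A_{p+2}\times T$ of Proposition~\ref{doublexT}, which already carries a presentation on the two generators $g,u$ with only $3$ relations. To reach $2A_{p+2}$ I would kill the direct factor $T$; to reach $A_{p+2}=2A_{p+2}/Z$ I would kill $T\times Z$, where $Z=\langle z\rangle$ is the central involution of $2A_{p+2}$, concretely $z=w^3=g^{3(p-1)/2}$. Both $T$ and $T\times Z$ are normal in $J$ ($T$ is a direct factor and $Z$ is central), so the normal closure of any element chosen inside one of them stays inside it, and the corresponding quotients are $J/T\cong 2A_{p+2}$ and $J/(T\times Z)\cong A_{p+2}$. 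Hence the entire task reduces to showing that each of these normal subgroups is the normal closure of a single element, so that killing it costs exactly one extra relation and the counts become $2$ generators and $3+1=4$ relations.

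First I would record the structure of $T$. From Neumann's presentation~\eqn{Neumann}, $T=\langle u,v\rangle$ with $v=g^6$, where $u$ has order $p$ and generates the translation subgroup, $v$ has order $(p-1)/2$, and $v$ acts on $\langle u\rangle\cong\mathbb{Z}/p$ as a nontrivial automorphism, say $vuv^{-1}=u^{e}$ with $e\not\equiv 1\pmod p$. Consequently $[T,T]=\langle u\rangle$ and $T/[T,T]\cong\langle v\rangle$ is cyclic. I claim $t:=uv$ normally generates $T$: a direct computation gives $(utu^{-1})t^{-1}=u^{1-e}$, which generates $\langle u\rangle=[T,T]$ since $1-e\not\equiv0$, and once $u\in\langle\langle t\rangle\rangle$ we also have $v=u^{-1}t\in\langle\langle t\rangle\rangle$. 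Adjoining the single relation $uv=1$ then yields $2A_{p+2}$ on $2$ generators and $4$ relations.

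For $A_{p+2}$ I would run the same commutator computation on $tz=uvz$, using that $z$ is central in $J$; it again deposits $u^{1-e}$, hence all of $\langle u\rangle$, into $\langle\langle tz\rangle\rangle$, whereupon $vz=u^{-1}(tz)$ lies there too. The one new point---and the only place the hypothesis enters---is separating $v$ from $z$: since $p\equiv11\pmod{12}$ forces $p\equiv3\pmod4$, the order $(p-1)/2$ of $v$ is odd, so $z^{(p-1)/2}=z$ and therefore $(vz)^{(p-1)/2}=z\in\langle\langle tz\rangle\rangle$, after which $v=(vz)z$ follows. Thus $\langle\langle tz\rangle\rangle=T\times Z$, and adjoining the single relation $uvw^3=1$ (that is, $ug^6g^{3(p-1)/2}=1$) gives $A_{p+2}$ on $2$ generators and $4$ relations.

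The step I expect to be the main obstacle is this last separation of $v$ from $z$: the subgroup $T\times\mathbb{Z}/2$ is the normal closure of a single element precisely when its abelianization $\mathbb{Z}/((p-1)/2)\times\mathbb{Z}/2$ is cyclic, i.e.\ precisely when $(p-1)/2$ is odd. This is exactly what $p\equiv11\pmod{12}$ guarantees (it forces $p\equiv3\pmod4$), and it is the reason the double cover can be collapsed at the cost of a single relation; for $T$ alone no congruence beyond the standing hypothesis is needed. Everything else is routine once the explicit words $v=g^6$ and $z=w^3=g^{3(p-1)/2}$ from the proof of Proposition~\ref{doublexT} are substituted in.
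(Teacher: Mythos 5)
Your high-level strategy coincides with the paper's: quotient the two-generator, three-relation group $J\cong 2A_{p+2}\times T$ of Proposition~\ref{doublexT} by a normal subgroup ($T$, resp.\ $T\times Z$) that is the normal closure of a single element, so that each quotient costs exactly one extra relation; and you correctly isolate the oddness of $(p-1)/2$ as the reason $T\times\mathbb{Z}/2$ admits a single normal generator. However, there is a genuine error in the execution: the subgroup $\langle u,v\rangle\le J$ (with $v=g^6$) that you propose to kill is \emph{not} the direct factor $1\times T$ of $J$. Under the isomorphism $\phi\colon J\to 2A_{p+2}\times T$ constructed in the proof of Proposition~\ref{doublexT} one has $\phi(u)=(u_0,u_0)$ and $\phi(g^6)=((v_0g_0)^6,v_0^6)=(v_0^6,v_0^6)$, so $\langle u,v\rangle$ is a \emph{diagonally} embedded copy of $T$: it projects onto both factors and is not normal in $J$ (by conjugation it permutes the generating set $\Omega=w^T$ of $N=\langle\Omega\rangle\cong 2A_{p+2}$ nontrivially). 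Your commutator computation therefore identifies the normal closure of $t=uv$ only inside $\langle u,v\rangle$, not inside $J$. In $J$ the closure is far larger: writing $\phi(uv)=(t_1,t_1)$ with $t_1=u_0v_0^6$ a noncentral element of the quasisimple group $2A_{p+2}$, the closure contains every $(t_1^at_1^{-1},1)$ with $a\in 2A_{p+2}$, hence by quasisimplicity all of $2A_{p+2}\times 1$, hence also $(1,t_1)$ and its $T$-conjugates, i.e.\ all of $J$. So adjoining $uv=1$ (or $uvw^3=1$) to the presentation of $J$ produces the \emph{trivial} group, not $2A_{p+2}$ (resp.\ not $A_{p+2}$).

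The repair, which is what the paper's one-line proof actually relies on, is to choose the extra relator inside the genuine direct factors $\phi^{-1}(1\times T)$ and $\phi^{-1}(Z\times T)$: these \emph{are} normal in $J$, and $J$ acts on them by conjugation through $T$, so it suffices that $T$ (resp.\ $T\times\mathbb{Z}/2$) be generated by a single conjugacy class -- exactly your abelianization argument, valid because $T$ is Frobenius with $T/[T,T]$ cyclic of odd order $(p-1)/2$. Concretely a normal generator is $\phi^{-1}((1,h))$ (resp.\ $\phi^{-1}((z,h))$) for suitable $h\in T$; this is some word in $g,u$, but not the word $uv$: the diagonal element $(h,h)$ must be corrected by the element $(h^{-1},1)$, which lies in $N$ because the odd-order group $T$ lifts to a subgroup of $N\cong 2A_{p+2}$. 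With the relator chosen in this corrected form, your counting $3+1=4$ and the rest of the argument go through.
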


For $A_{p+2}$, this is already proved in \cite{GKKL3}. 

 Proposition~ \ref{SxT} now
implies that there is even a profinite presentation of $2A_{p+2}$ with 2 generators and 
only 3   relations, proving
part of Theorem~\ref{altp+2}(b) when $p \equiv 11 \mod 12$.
For the more general case $p \equiv 3 \mod 4$ we will need
 more tools (see Theorem~\ref{2Sn}).  

We finish this section by restating and generalizing some of our earlier results,  
as well as  \cite[Corollary 3.8]{GKKL3},   in terms of amalgamated products. 

\begin{lemma} \label{amalgam1}  Let  $p \equiv 3 \mod 4$ be  prime.  
Let $T$ be the subgroup of index $2$ in $\AGL(1,p)$.
Then 
$A_{p+2} \times T = X/N, $   where 
\begin{itemize}
\item[\rm (i)]
$X$
is the $($free$)$ amalgamated product of $A$ and $T$ with $A \cap T = C$
cyclic of order $(p-1)/2$  and
$A \cong \Z/(3) \times  C,$ and
\item[\rm (ii)]
$N$ is the normal closure in $X$ of a single element  $x^2,$  $x\in X$.
\end{itemize}
\end{lemma}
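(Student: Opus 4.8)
The plan is to realize $X$ as an amalgam of two concrete finite groups and then collapse it to $A_{p+2}\times T$ by normally adjoining a single square, in the spirit of the proofs of Propositions~\ref{double cover} and~\ref{doublexT} but with Carmichael's presentation~(\ref{carmichael}) replacing the double-cover relations. View $A_{p+2}$ as permuting $\F_p\cup\{p+1,p+2\}$, and let $C\le\F_p^\times$ be the cyclic group of squares, of order $(p-1)/2$. Since $p\equiv3\mod4$ this order is odd, so the subgroup $T=\F_p\rtimes C$ of $\AGL(1,p)$ lies in $A_p\le A_{p+2}$ and fixes $p+1,p+2$. Put $a:=(0,p+1,p+2)$. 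As every element of $C$ fixes each of $0,p+1,p+2$, it commutes with $a$, so $A:=\langle a\rangle\times C\cong\Z/(3)\times C$ and $A\cap T=C$. I then set $X:=A\ast_C T$, which is the amalgam required in (i).

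Next I would build a surjection $\psi\colon X\to A_{p+2}\times T$ from the universal property of the amalgam: send $T$ to the diagonal, $t\mapsto(t,t)$, and send $A$ by $a\mapsto(a,1)$ and $c\mapsto(c,c)$ for $c\in C$. These agree on $C$, and the assignment on $A$ is consistent because $(a,1)$ and $(c,c)$ commute (as $a$ and $c$ do in $A_{p+2}$); hence $\psi$ is a well-defined homomorphism. Conjugating $\psi(a)$ by the diagonal translations produces all the $3$-cycles $(j,p+1,p+2)$, $j\in\F_p$, so $\psi(X)\supseteq A_{p+2}\times1$, and together with the diagonal copy of $T$ this shows $\psi$ is onto. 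Writing $a^u:=u^{-1}au$ with $u\in T$ the translation $x\mapsto x+1$, the image $\psi(a\,a^u)$ is a product of two $3$-cycles sharing the two points $p+1,p+2$, hence a double transposition of order $2$. I therefore take $x:=a\,a^u$, so that $x^2\in\ker\psi$, and set $N:=\langle\langle x^2\rangle\rangle$ and $\bar X:=X/N$; this is the $N$ of (ii).

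The crux is a counting argument in $\bar X$. Write $x_i:=a^{u^{i}}$ for $i\in\F_p$; these map bijectively onto the Carmichael generators of $A_{p+2}$, and conjugation by $T$ permutes the $x_i$ exactly as $T$ permutes $\F_p$. In $\bar X$ we have $x_i^3=1$ (from $a^3=1$), while the defining relation $x^2=1$ reads $(x_0 x_1)^2=1$; conjugating by $T$ and using that $T$ is $2$-homogeneous on $\F_p$ — here $p\equiv3\mod4$ is used again, as it makes $-1$ a nonsquare — yields $(x_i x_j)^2=1$ for every pair $i\ne j$. Thus $\bar N_0:=\langle x_i:i\in\F_p\rangle$ satisfies the relations~(\ref{carmichael}) and is a quotient of $A_{p+2}$; as $A_{p+2}$ is simple and $\psi(\bar N_0)$ is all of $A_{p+2}\times1$, in fact $\bar N_0\cong A_{p+2}$. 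Since $u$, $a=x_0$, and $v$ each normalize $\{x_i\}$ (the first two shift the indices by translation, the last because $v$ conjugates powers of $u$ to powers of $u$ and commutes with $a$), we get $\bar N_0\unlhd\bar X$ and $\bar X=\bar N_0\,\bar T$, where $\bar T$ is the image of $T$. Hence $|\bar X|\le|A_{p+2}|\,|T|=|A_{p+2}\times T|$, and as $\psi$ is a surjection factoring through $\bar X$, it must be an isomorphism. This gives $A_{p+2}\times T\cong X/N$ with $X$ and $N$ as claimed.

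The main obstacle is precisely the step that a single square normally generates the entire kernel, i.e.\ that the lone relation $(x_0 x_1)^2=1$ spreads under $T$-conjugacy to all of Carmichael's braid relations; this is what forces the hypothesis $p\equiv3\mod4$ through the $2$-homogeneity of $T$. The remaining ingredients — well-definedness and surjectivity of $\psi$, normality of $\bar N_0$, and the order bound — are routine, with the simplicity of $A_{p+2}$ doing the work of upgrading ``quotient of $A_{p+2}$'' to ``$A_{p+2}$,'' its nontriviality being guaranteed by $\psi$.
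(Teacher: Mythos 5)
Your proof is correct and follows essentially the same route as the paper's: the same surjection onto $A_{p+2}\times T$ (diagonal on $T$, $a\mapsto(a,1)$), the same element $x=aa^u$ whose square is normally adjoined, the same use of $2$-homogeneity of $T$ on $\F_p$ (from $p\equiv 3 \bmod 4$) to propagate the single relation $(x_0x_1)^2=1$ to all of Carmichael's relations, and the same order count $|\bar X|\le |A_{p+2}||T|$ against the surjection. (One immaterial slip: $a=x_0$ does not in fact permute the set $\{x_i\}$, but conjugation by $a$ preserves $\bar N_0$ anyway since $a\in\bar N_0$, so normality is unaffected.)
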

 
 \begin{proof}   
 Let $X$ be the given amalgamated product.  Write
 $A = \langle a \rangle \times  \langle c \rangle$ where $a^3=1$ and $c $  generates $C$.
 Let $T = \langle u, v  \rangle$ where $u^p=1$, $v$ normalizes 
 $\langle u \rangle$ and $v$ has order $(p-1)/2$.  We identify
 $v$ with $c$,  so that $X$ has the  presentation   
 $$X =\langle a,  u, v \mid a^3=v^{(p-1)/2}=[a,v]=1,  
 u^p=1, u^v=u^e  \rangle$$
 for an integer  $e$ of order $(p-1)/2$  mod $p$.  
 We identify $T$ with a subgroup of $A_p < A_{p+2}$ acting on $\F_p$, fixing $\{p+1, p+2\}$, and 
 such that  $v$ fixes $0 \in \F_p$.

There is a surjection   $\phi\colon X \to A_{p+2} \times T$
 sending
    $u \mapsto  (u,u)$,
   $v\mapsto (v,v)$ and  $a \mapsto (a_0,1)$ with $a_0= (0, p +1  ,  p + 2) \in A_{p+2}$. 
     We can identify $T$ with the subgroup
  $\langle u, v \rangle$   of $X$.

 Let $N :=\langle (x^2)^X\rangle $  with $x:=aa^u$.
Since $\phi((aa^u)^2) = (  ( a_0a_0^{u_0} )^2  ,1 )=1$,    $X/N$ surjects onto
 $A_{p+2} \times T$.  
 
 Since $v$ centralizes $a$, 
 as in the proof of Proposition~\ref{doublexT} 
we again see that $|a^T| = p$,  
 and hence that 
 $T$ acts transitively on the 2-element subsets   of $a^T$ since  $p \equiv 3 \mod 4$.  Thus,
 $(a_1a_2)^2 \in N$ for every pair of distinct elements $a_1,a_2 \in a^T$.
Clearly $\langle a^T \rangle $ is normal in $X$, so that 
$\langle a^T \rangle =  \langle a^X \rangle $.  By (\ref{carmichael}),
$\langle a^T \rangle
 \cong A_{p+2}$.   Since $X/\langle a^X\rangle \cong T$, 
 we have  $|X/N| \le |A_{p+2}||T|$ and hence $X/N \cong A_{p+2} \times T$,
 as claimed.%
 \end{proof}

\section{More cohomology} \label{coho2}

We first prove a result for cohomology of amalgamated products (by which we will always mean {\em free}  amalgamated products).
One can prove a more precise version, but we will be  only need 
that $H^2(G,M)=0$ in restricted  situations. 

\begin{lemma}  \label{amalgam} Let $G$ be the  amalgamated product of the groups
$A$ and $B$ over $C$.  Let $M$ be a finite-dimensional $kG$-module.
Then 
$$
\dim H^2(G,M) \le \dim H^2(A,M) + \dim H^2(B,M) + \dim H^1(C,M).
$$
\end{lemma}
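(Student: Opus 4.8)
The plan is to exploit the Mayer--Vietoris sequence in group cohomology for the amalgamated free product $G = A *_C B$. This is the standard cohomological tool attached to the decomposition of $G$ as the fundamental group of a graph of groups (here a single edge with vertex groups $A,B$ and edge group $C$), and it reads, for any $kG$-module $M$,
\begin{equation*}
\cdots \to H^{n}(G,M) \to H^{n}(A,M)\oplus H^{n}(B,M) \to H^{n}(C,M) \to H^{n+1}(G,M) \to \cdots,
\end{equation*}
where the maps are (differences of) restrictions and the connecting homomorphisms. This sequence is classical (see Bieri, or Brown \cite{Br}, for the cohomology of amalgams); I would simply cite it rather than rederive it.

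From this long exact sequence I would extract the relevant piece around degree $2$, namely
\begin{equation*}
H^{1}(A,M)\oplus H^{1}(B,M) \xrightarrow{\ \rho\ } H^{1}(C,M) \xrightarrow{\ \delta\ } H^{2}(G,M) \xrightarrow{\ \mathrm{res}\ } H^{2}(A,M)\oplus H^{2}(B,M).
\end{equation*}
Exactness at $H^{2}(G,M)$ gives that the image of $\mathrm{res}$ is isomorphic to $H^2(G,M)/\ker(\mathrm{res})$, while $\ker(\mathrm{res})=\mathrm{im}(\delta)$ is a quotient of $H^1(C,M)$. Hence
\begin{equation*}
\dim H^2(G,M) = \dim \ker(\mathrm{res}) + \dim \mathrm{im}(\mathrm{res}) \le \dim H^1(C,M) + \dim\!\big(H^2(A,M)\oplus H^2(B,M)\big),
\end{equation*}
which is exactly the claimed bound. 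The first inequality uses that $\dim\ker(\mathrm{res}) = \dim\mathrm{im}(\delta) \le \dim H^1(C,M)$, and the second uses that $\mathrm{im}(\mathrm{res})$ is a subspace of $H^2(A,M)\oplus H^2(B,M)$.

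The main obstacle is not the diagram chase, which is entirely routine, but citing the Mayer--Vietoris sequence in a form valid for an arbitrary $kG$-module $M$ over the possibly infinite discrete group $G$. I would want to make sure the version invoked does not require finiteness or freeness hypotheses on $M$ beyond finite-dimensionality, and that the edge-group term really is $H^{n}(C,M)$ with the restriction maps. One clean way to guarantee this is to build the sequence from the short exact sequence of $\mathbb{Z}G$-modules $0 \to \mathbb{Z}[G/C] \to \mathbb{Z}[G/A]\oplus \mathbb{Z}[G/B] \to \mathbb{Z} \to 0$ coming from the Bass--Serre tree of the amalgam, apply $\Hom_G(-,M)$ (equivalently the relevant $\mathrm{Ext}$-long-exact-sequence together with Shapiro's lemma to identify $\mathrm{Ext}^n_G(\mathbb{Z}[G/H],M)\cong H^n(H,M)$), and read off the result. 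Once that sequence is in hand the proof is immediate, so I would keep the write-up short: state Mayer--Vietoris with a reference, display the degree-$2$ fragment, and finish with the two-line dimension count above.
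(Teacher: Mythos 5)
Your proof is correct, but it follows a genuinely different route from the paper's. The paper avoids Mayer--Vietoris entirely and argues directly with extensions: for a class $u$ in the kernel $U$ of the restriction map $H^2(G,M)\to H^2(A,M)\oplus H^2(B,M)$, the corresponding extension $1\to M\to E\to G\to 1$ splits over $A$ and over $B$; restricting the two splittings to $C$ and taking their difference yields a derivation $C\to M$, well defined in $H^1(C,M)$ up to inner derivations, and the resulting linear map $U\to H^1(C,M)$ is injective because, if the derivation is inner, one may conjugate one splitting by an element of $M$ so that the two splittings agree on $C$, whereupon the universal property of $G=A*_CB$ assembles them into a splitting of $E\to G$, forcing $u=0$. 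This is in effect a hands-on construction of exactly the fragment of the Mayer--Vietoris sequence you quote: the paper's embedding $U\hookrightarrow H^1(C,M)$ essentially inverts your connecting homomorphism $\delta$ on its image. What your route buys: it is shorter, and exactness gives strictly more than the inequality --- it identifies $\ker(\mathrm{res})$ as the quotient $H^1(C,M)/\mathrm{im}(\rho)$, which is the ``more precise version'' the paper alludes to just before the lemma but declines to prove. What the paper's route buys: it is elementary and self-contained, using only the interpretation of $H^2$ by extensions, $H^1$ by derivations, and the universal property of the amalgam, with no appeal to Bass--Serre theory or Shapiro's lemma; your version does need the Mayer--Vietoris sequence in a form valid for arbitrary coefficient modules over a possibly infinite group, a point you correctly secure by deriving it from the permutation-module exact sequence $0\to\mathbb{Z}[G/C]\to\mathbb{Z}[G/A]\oplus\mathbb{Z}[G/B]\to\mathbb{Z}\to 0$ coming from the Bass--Serre tree.
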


\begin{proof}  Let $U$ be the kernel of the natural map 
$H^2(G,M)\to H^2(A,M) \oplus  H^2(B,M)$. 
Clearly $\dim H^2(G,M) -\dim U \le \dim H^2(A,M) + \dim H^2(B,M)$. 
Thus,  it suffices
to show that there is an embedding of $U$ into $H^1(C,M)$.
\vspace{-2pt}

Let $u \in U$.  There is a corresponding  extension  
$1  \rightarrow M \rightarrow E  \stackrel{f}{\rightarrow} G \rightarrow 1$, and
$f$ splits over both $A$ and $B$: there are injections 
$\psi_A\colon A\to E$  and  $\psi_B\colon B\to E$  such that
\begin{equation}\label{embeddings}
\mbox { $f\psi_A=1_A~$  and $~f\psi_B=1_B$.}
\end{equation}
Let $A_1:=\psi_A(A)$  and  $B_1:=\psi_B(B)$.

The maps  $ \psi_A$   and $ \psi_B$ produce splittings of 
$1  \rightarrow M \rightarrow f^{-1}(C) \rightarrow C\rightarrow 1$,
and hence also define derivations  $\delta_A$ and $\delta_B$ from $C$ to $M$.
Replacing $A _1$  by $A_1^m$ with $m\in M$  changes $\delta_A$ by an inner derivation, and hence 
we obtain  a well-defined linear map $U \rightarrow H^1(C,M)$.  
Consequently, 
 $u\mapsto \delta :=\delta_A - \delta_B$   induces a linear map  $ U\to H^1(C,M)$.

We claim that this map   is injective.  Assume that $\delta$ is an  inner derivation
on $C$.  This means that  the splitting  $\psi_A|_C$ is obtained from $\psi_B|_C$ 
by  conjugating by an element of $M$.  
Therefore, replacing $B_1$ by  a conjugate we may assume that
$\psi_A|_C= \psi_B|_C$.
By the universal property of $G=A*_CB$, there is a homomorphism 
$\psi\colon G\to E$ such that 
$\psi |_A = \psi_A$  and $\psi |_B = \psi_B$.  Since $f\psi = 1_G$ by 
(\ref{embeddings}), this completes the proof.
 \end{proof}

We will use the previous result in the following form:

\begin{corollary} 
\label{H2=0}
 Let $X$ be an amalgamated product of finite groups
$A$ and $B$ of order prime to $p$.  If $V$ is a finite-dimensional
$kX$-module over a field  $k$  of characteristic $p,$   then $H^2(X,V)=0$.
\end{corollary}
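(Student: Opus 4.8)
The plan is to deduce this corollary directly from Lemma~\ref{amalgam} by showing that each of the three terms on the right-hand side vanishes under the stated hypotheses. Since $A$ and $B$ are finite groups of order prime to $p$, and $V$ is a module over a field of characteristic $p$, standard group cohomology (Maschke's theorem, or the fact that $H^i(H,V)$ is annihilated by $|H|$ and also by $\mathrm{char}\,k$ when $V$ is a $k$-module) gives $H^i(A,V)=0$ and $H^i(B,V)=0$ for all $i>0$. In particular $H^2(A,V)=H^2(B,V)=0$.

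The remaining term is $H^1(C,V)$, where $C=A\cap B$ is the amalgamated subgroup. The key observation is that $C$ is a subgroup of $A$ (equivalently of $B$), so $|C|$ divides $|A|$, which is prime to $p$. Hence $C$ also has order prime to $p$, and the same vanishing argument applies: $H^1(C,V)=0$. Substituting all three zeros into the inequality of Lemma~\ref{amalgam} yields
$$
\dim H^2(X,V) \le 0 + 0 + 0 = 0,
$$
so $H^2(X,V)=0$, as claimed.

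I would present this cleanly in three short steps: first invoke the fact that for a finite group $H$ whose order is prime to $p$, every positive-degree cohomology group with coefficients in a $kH$-module (char $k = p$) vanishes; second apply this to $A$, $B$, and $C$ (noting $|C| \mid |A|$ is automatically prime to $p$); and third plug into Lemma~\ref{amalgam}. There is essentially no obstacle here — the only point requiring a moment's care is noting that the hypothesis on $A$ and $B$ forces the same condition on the amalgamated subgroup $C$, which is what makes the $H^1(C,V)$ term disappear rather than merely being bounded. The whole argument is a one-line consequence of the lemma once the coprimality vanishing is recorded.
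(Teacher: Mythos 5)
Your proof is correct and is precisely the argument the paper intends: the corollary is stated as an immediate consequence of Lemma~\ref{amalgam}, with all three terms on the right-hand side vanishing because $A$, $B$, and hence the amalgamated subgroup $C$ have order prime to $p = \mathrm{char}\,k$. Your observation that $|C|$ divides $|A|$, so the coprimality hypothesis passes automatically to $C$, is exactly the one point of care needed, and the rest is the standard vanishing of positive-degree cohomology of a finite group on modules over a field of coprime characteristic.
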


\begin{lemma} \label{quotient}  Let $G = X/N$ and let $M$ be a $kX$-module
for some field $k$,  with $N$ acting trivially.  View $M$
as a $kG$-module.
Assume that $N$ can be generated by $s$ elements as 
a normal subgroup of $G$.  Then
$$\dim H^2(G,M) \le \dim H^2(X,M) + s \dim M.$$
\end{lemma}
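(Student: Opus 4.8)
The plan is to compare $H^2(G,M)$ with $H^2(X,M)$ through the inflation-restriction sequence (Lemma~\ref{infres}), applied to the normal subgroup $N$ of $X$ with quotient $X/N = G$. Since $N$ acts trivially on $M$ we have $M^N = M$, so the five-term exact sequence reads
$$
0 \rightarrow H^1(G,M) \rightarrow H^1(X,M) \rightarrow H^1(N,M)^{G}
\rightarrow H^2(G,M) \rightarrow H^2(X,M).
$$
Exactness at $H^2(G,M)$ shows that the kernel of the map $H^2(G,M) \to H^2(X,M)$ is the image of $H^1(N,M)^G$, whence
$$
\dim H^2(G,M) \le \dim H^2(X,M) + \dim H^1(N,M)^G .
$$
It then remains only to bound $\dim H^1(N,M)^G$ by $s\dim M$.

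Next I would invoke the identification recorded just after Lemma~\ref{infres}: as $N$ acts trivially on $M$, we have $H^1(N,M)^G = \Hom_G(N/[N,N],M)$, where $G = X/N$ acts on $N^{\rm ab} := N/[N,N]$ via conjugation in $X$ (inner automorphisms of $N$ act trivially on $N^{\rm ab}$, so the action indeed factors through $G$). Because $M$ is a $k$-vector space, any homomorphism $N^{\rm ab}\to M$ factors through $V := N^{\rm ab}\otimes_{\Z} k$, and the $G$-equivariant ones are precisely the $kG$-module homomorphisms; thus $H^1(N,M)^G \cong \Hom_{kG}(V,M)$.

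Finally I would translate the hypothesis that $N$ is generated by $s$ elements as a normal subgroup of $G$ (equivalently of $X$) into a statement about $V$. If $n_1,\dots,n_s$ normally generate $N$, then their images in $N^{\rm ab}$, together with all their $X$-conjugates, generate $N^{\rm ab}$ as an abelian group; since conjugation descends to the $G$-action, these $s$ images generate $V$ as a $kG$-module. A choice of such generators yields a surjection $(kG)^s \twoheadrightarrow V$, which dualizes to an injection $\Hom_{kG}(V,M) \hookrightarrow \Hom_{kG}((kG)^s,M) \cong M^s$. Hence $\dim H^1(N,M)^G = \dim \Hom_{kG}(V,M) \le s\dim M$, and combining this with the first display gives the assertion.

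The step requiring the most care is the middle one: setting up the $G$-module structure on $N^{\rm ab}$ correctly and verifying that the $G$-invariant homomorphisms out of $N^{\rm ab}$ coincide with $kG$-module homomorphisms out of $V$. Once that dictionary is established, the passage from normal generators of $N$ to $kG$-module generators of $V$, and the resulting bound $\dim\Hom_{kG}(V,M)\le s\dim M$, are purely formal.
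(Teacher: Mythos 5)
Your proof is correct and follows essentially the same route as the paper's: the five-term inflation--restriction sequence to bound $\dim H^2(G,M)$ by $\dim H^2(X,M)+\dim H^1(N,M)^X$, the identification $H^1(N,M)^X\cong \hom_G(N/[N,N],M)$, and the observation that normal generation of $N$ by $s$ elements makes $N/[N,N]$ an $s$-generated $G$-module. The only difference is that you spell out the final counting step (tensoring with $k$ and dualizing a surjection $(kG)^s\twoheadrightarrow N/[N,N]\otimes_{\Z}k$), which the paper simply asserts.
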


\begin{proof}   View  By the inflation restriction 
sequence (Lemma \ref{infres}),
there is an exact sequence
$$
H^1(N,M)^X \rightarrow H^2(X/N, M) \rightarrow H^2(X,M).
$$
Since $N$ acts trivially on $M$,  $H^1(N,M)^X \cong \hom_X(N,M)
\cong \hom_G(N/[N,N], M)$.   Since $N$ can be generated as a normal
subgroup by $s$ elements,  $N/[N,N]$ can be generated as a $G$-module
by $s$ elements, whence $\dim H^1(N,M)^X \le s \dim M$.
\end{proof}

Lemmas \ref{amalgam} and \ref{quotient}  imply the following (using $X =A*B$ in Lemma~\ref{quotient}):

\begin{lemma}  Let $G$ be a group with subgroups $A$ and $B$ such
that $G = \langle A, B \mid w_i = 1, \ 1 \le i \le t \rangle$ for words  $w_i$ in
$A \cup B$.    Let  $M$ a finite-dimensional $kG$-module
over a field $k$.
Then 
$$\dim H^2(G,M) \le \dim H^2(A,M) + \dim H^2(B,M) + t \dim M.$$
\end{lemma}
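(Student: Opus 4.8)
The statement to prove is: if $G = \langle A, B \mid w_i = 1, \ 1 \le i \le t \rangle$ for words $w_i$ in $A \cup B$, and $M$ is a finite-dimensional $kG$-module, then
$$
\dim H^2(G,M) \le \dim H^2(A,M) + \dim H^2(B,M) + t \dim M.
$$
The plan is to realize $G$ as a quotient of the free product $X := A * B$ and then feed this description into the two lemmas the paper has just established. The hint in the excerpt (``using $X = A * B$ in Lemma~\ref{quotient}'') makes the strategy explicit: I would take $X = A * B$, let $N$ be the normal closure in $X$ of the set $\{w_1, \ldots, w_t\}$, so that $G = X/N$ with $N$ generated as a normal subgroup by the $t$ words $w_i$.

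The argument is then a two-step combination. First I would apply Lemma~\ref{amalgam} to the amalgamated product $X = A * B$; here the amalgamated subgroup $C$ is trivial, so $H^1(C,M) = H^1(1,M) = 0$, and Lemma~\ref{amalgam} gives
$$
\dim H^2(X,M) \le \dim H^2(A,M) + \dim H^2(B,M).
$$
(Strictly, $M$ is an $X$-module by restricting the $G$-action along $X \to G$, and then an $A$- and $B$-module by further restriction, which is exactly how both lemmas use it.) Second, since $N$ acts trivially on $M$ — every $w_i$ maps to the identity in $G$, so $N \le \ker(X \to G)$ acts trivially on the $G$-module $M$ — and $N$ is generated by $t$ elements as a normal subgroup of $X$, I would apply Lemma~\ref{quotient} with $s = t$ to obtain
$$
\dim H^2(G,M) \le \dim H^2(X,M) + t \dim M.
$$
Chaining the two inequalities yields the claimed bound.

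The only genuine point requiring care — and the closest thing to an obstacle — is verifying the hypotheses of the two cited lemmas in this setting, rather than any hard estimate. I need that $N$ acts trivially on $M$ (immediate, since $N$ lies in the kernel of $X \to G$ and $M$ is a $G$-module) and that $N$ is normally generated by the $t$ relators $w_i$ (this is the defining content of the presentation $G = \langle A, B \mid w_i = 1\rangle$: $N$ is by definition the normal closure of the $w_i$ in the free product $A * B$). One should also note that the trivial intersection $A \cap B = 1$ in the free product forces $C = 1$, killing the $H^1(C,M)$ term in Lemma~\ref{amalgam}; this is what makes the free product (rather than a genuine amalgam) the right ambient group here. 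Everything else is a direct citation, so the proof is short.

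\begin{proof}
Let $X := A * B$ be the free product, and let $N$ be the normal closure in $X$ of $\{w_1, \ldots, w_t\}$, so that $G = X/N$ and $N$ is generated by $t$ elements as a normal subgroup of $X$. Restricting the $G$-action makes $M$ an $X$-module on which $N$ acts trivially. Since $X$ is the amalgamated product of $A$ and $B$ over the trivial group $C = 1$, and $H^1(1,M) = 0$, Lemma~\ref{amalgam} gives
$$
\dim H^2(X,M) \le \dim H^2(A,M) + \dim H^2(B,M).
$$
Applying Lemma~\ref{quotient} with $s = t$ yields
$$
\dim H^2(G,M) \le \dim H^2(X,M) + t \dim M.
$$
Combining the two displayed inequalities completes the proof.
\end{proof}
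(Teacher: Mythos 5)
Your proof is correct and is exactly the paper's argument: the paper derives this lemma by combining Lemma~\ref{amalgam} (applied to the free product $X=A*B$, viewed as an amalgam over the trivial subgroup) with Lemma~\ref{quotient} (taking $N$ to be the normal closure of the $t$ relators, which acts trivially on $M$). Your write-up simply makes explicit the hypothesis-checking that the paper leaves implicit.
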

 
We will also need the following result   \cite[Lemma~4.1(3)]{GKKL2}  about covering groups 
 (this is stated  there  for
quasisimple groups, but the proof does not use this):

\begin{lemma} \label{covering}   Let $G$ be a  finite group with
center $Z$. 
Let $M$ be a nontrivial irreducible $kG$-module with $Z$ trivial on $M, $ 
where $k$ is a field of characteristic $r$. 
Then 
$$\dim H^2(G,M) \le \dim H^2(G/Z,M) + c \dim H^1(G,M),$$ 
where $c$ is the $r$-rank of $Z$.
\end{lemma}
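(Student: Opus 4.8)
The plan is to run the Lyndon--Hochschild--Serre spectral sequence for the central extension $Z \lhd G$ with coefficients in $M$, namely $E_2^{p,q} = H^p(G/Z, H^q(Z,M)) \Rightarrow H^{p+q}(G,M)$, and to read off $H^2(G,M)$ from the three terms $E_\infty^{2,0}, E_\infty^{1,1}, E_\infty^{0,2}$ on the antidiagonal. Since each $E_\infty^{p,q}$ is a subquotient of the corresponding $E_2^{p,q}$, I only need upper bounds on the $E_2$ terms, so no differential need be computed. First I would exploit that $Z$ is central and acts trivially on $M$: the conjugation action of $G$ on $Z$ is then trivial, so $G/Z$ acts trivially on each $H^q(Z,k)$, and $H^q(Z,M) \cong H^q(Z,k) \otimes_k M$ as $k[G/Z]$-modules. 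Pulling the trivial factor out of the outer cohomology yields $E_2^{p,q} \cong H^q(Z,k) \otimes_k H^p(G/Z,M)$, so that $\dim E_2^{p,q} = \dim H^q(Z,k)\cdot \dim H^p(G/Z,M)$.

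Next I would evaluate the three relevant terms. The term $E_2^{2,0} = H^2(G/Z,M)$ contributes exactly the first summand on the right-hand side. For $E_2^{0,2} = H^2(Z,k)\otimes M^{G/Z}$, I would use that $M$ is a nontrivial irreducible $kG$-module: its fixed-point space $M^G = M^{G/Z}$ is a proper submodule, hence zero, so $E_2^{0,2} = 0$ and this antidiagonal position drops out entirely. The crux is $E_2^{1,1} = H^1(Z,k)\otimes_k H^1(G/Z,M)$. Here I would compute $\dim_k H^1(Z,k)$: with trivial action, $H^1(Z,k) = \Hom(Z,(k,+))$, and since $k$ has characteristic $r$ the additive group $(k,+)$ is an $\F_r$-vector space, so these homomorphisms factor through the largest elementary abelian $r$-quotient of $Z$. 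This gives $\dim_k H^1(Z,k)$ equal to the $\F_r$-rank of the Sylow $r$-subgroup of $Z$, that is, exactly $c$.

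Assembling the pieces, the filtration of $H^2(G,M)$ gives $\dim H^2(G,M) \le \dim E_2^{2,0} + \dim E_2^{1,1} + \dim E_2^{0,2} = \dim H^2(G/Z,M) + c\,\dim H^1(G/Z,M)$. Finally I would replace $\dim H^1(G/Z,M)$ by $\dim H^1(G,M)$ using the inflation-restriction sequence of Lemma~\ref{infres} (with $N=Z$, so $M^Z = M$), whose first map $H^1(G/Z,M)\hookrightarrow H^1(G,M)$ is injective; this yields the stated bound. I expect the only genuinely delicate step to be the dimension count $\dim_k H^1(Z,k)=c$, where one must track that the relevant dimension is taken over $k$ rather than over $\F_r$ and confirm that the scalar extension from $\Hom(Z,\F_r)$ to $\Hom(Z,(k,+))$ preserves the dimension. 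The vanishing $E_2^{0,2}=0$ is where nontriviality of $M$ is essential, and the centrality of $Z$ is precisely what forces the $G/Z$-action on each $H^q(Z,k)$ to be trivial.
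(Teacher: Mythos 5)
Your proof is correct, but there is essentially nothing in this paper to compare it against: the paper does not prove Lemma~\ref{covering} at all, it quotes it from \cite[Lemma~4.1(3)]{GKKL2}, remarking only that the quasisimplicity hypothesis there is not actually used. Your Lyndon--Hochschild--Serre argument is the standard, and surely the intended, one, and the delicate points are all handled correctly: centrality of $Z$ together with triviality of $Z$ on $M$ gives $H^q(Z,M)\cong H^q(Z,k)\otimes_k M$ as $k[G/Z]$-modules with trivial action on the first factor; $E_2^{0,2}=(H^2(Z,k)\otimes_k M)^{G/Z}$ vanishes because $M^G=0$ for a nontrivial irreducible module; and $\dim_k H^1(Z,k)=\dim_k\Hom(Z,(k,+))=c$, since such homomorphisms factor through the maximal elementary abelian $r$-quotient of $Z$, which is $\F_r$-linearly isomorphic to $\F_r^{\,c}$. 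Two minor observations. First, your count of $\dim_k H^1(Z,k)$ tacitly assumes $r>0$; if $k$ has characteristic $0$ the lemma is vacuous (all higher cohomology of a finite group vanishes), so nothing is lost. Second, the intermediate inequality you establish, $\dim H^2(G,M)\le\dim H^2(G/Z,M)+c\,\dim H^1(G/Z,M)$, is formally sharper than the stated one, since inflation embeds $H^1(G/Z,M)$ into $H^1(G,M)$ by Lemma~\ref{infres}; your final appeal to inflation--restriction only relaxes it to the form stated in the lemma.
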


The next observation is trivial:

\begin{lemma} \label{covering2}  Let $G$ be a finite perfect
group and $\tilde{G}$ its  universal cover.  If $\tilde{G}$
is proficient, then so is $G$.
\end{lemma}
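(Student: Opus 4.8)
The plan is to exploit the defining property of the universal cover together with the basic inequality (\ref{prof ineq}), making precise the observation recorded in the introduction that a presentation of $G=\tilde{G}/Z$ is obtained from one of $\tilde{G}$ by killing generators for the central kernel $Z$. First I would assemble the relevant structural facts. Since $\tilde{G}$ is the universal cover of the perfect group $G$, the kernel $Z:=\ker(\tilde{G}\to G)$ is central with $Z\cong M(G)$, the group $\tilde{G}$ is itself perfect and superperfect so that $M(\tilde{G})=0$, and $d(\tilde{G})=d(G)$. The last equality is standard: lifting a generating set of $G$ of size $d(G)$ to $\tilde{G}$ produces a subgroup $H$ with $HZ=\tilde{G}$, and since $Z$ is central and $G$ is perfect one has $\tilde{G}=[\tilde{G},\tilde{G}]=[H,H]\le H$, forcing $H=\tilde{G}$.

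Next I would carry out the construction. Since $\tilde{G}$ is proficient and $d(M(\tilde{G}))=0$, it has a profinite presentation $1\to\hat{R}\to\hat{F}\to\tilde{G}\to 1$ in which $\hat{F}$ is free profinite on $d(G)$ generators and $\hat{R}$ is normally generated by $d(G)$ elements. Composing with $\tilde{G}\to G$ gives a profinite presentation $1\to\hat{R}'\to\hat{F}\to G\to 1$ with $\hat{R}\le\hat{R}'$ and $\hat{R}'/\hat{R}\cong Z$. Because $Z$ is central in $\tilde{G}=\hat{F}/\hat{R}$, normal generation of $Z$ coincides with ordinary generation, so $Z$ is normally generated in $\tilde{G}$ by $d(Z)=d(M(G))$ elements. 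Adjoining lifts of these to the $d(G)$ normal generators of $\hat{R}$ then shows that $\hat{R}'$ is normally generated in $\hat{F}$ by $d(G)+d(M(G))$ elements.

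Finally I would conclude: $G$ has a profinite presentation with $d(G)$ generators and $d(G)+d(M(G))$ relations, whence $\hat{r}(G)\le d(G)+d(M(G))$, and combining with the reverse inequality (\ref{prof ineq}) yields $\hat{r}(G)=d(G)+d(M(G))$, i.e. $G$ is proficient. There is no serious obstacle, as the authors indicate by calling the statement trivial; the only points needing care will be the two routine facts that $d(\tilde{G})=d(G)$ and that a central subgroup is normally generated by as many elements as it is generated as an abelian group. If one prefers to begin from an arbitrary proficient presentation of $\tilde{G}$ on more than $d(G)$ generators, Proposition~\ref{lub0.1} can be invoked to first reduce to the $d(G)$-generator case before killing $Z$.
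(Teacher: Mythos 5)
Your proof is correct and takes essentially the same approach as the paper: the authors regard this lemma as trivial, the intended argument being precisely the observation recorded in their introduction that a presentation of $G=\tilde{G}/Z$ is obtained from one of $\tilde{G}$ by adding relations killing generators of the central kernel $Z\cong M(G)$. The details you supply --- that $d(\tilde{G})=d(G)$, that a central subgroup is normally generated by $d(Z)$ elements, and the reduction to $d(G)$ generators via Proposition~\ref{lub0.1} --- are exactly the right ones to make that observation rigorous in the profinite setting.
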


\begin{corollary} \label{cov-cyclic}
 Let $S$ be a finite perfect group with cyclic Schur multiplier,
 trivial center and universal cover $\tilde{S}$.
 If $\dim H^2(S,M) \le \dim M$ for all irreducible $S$-modules $M,$
 then any central quotient of $\tilde{S}$ is proficient.
 \end{corollary}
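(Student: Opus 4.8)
The plan is to verify the proficiency criterion (\ref{prof ineq2}) directly for $G := \tilde{S}/Z_0$, where $Z_0$ is an arbitrary subgroup of $Z(\tilde S)$. First I would record the relevant structure. Since $S$ is perfect, $\tilde S$ is perfect and $Z(\tilde S)=M(S)$ is cyclic; since $S$ has trivial center, a short computation shows $Z(G)=Z(\tilde S)/Z_0$ and $G/Z(G)\cong S$. In particular $Z(G)$ is cyclic, so its $r$-rank is at most $1$ for every prime $r$. The task then reduces to showing that $\nu_2(M)\le 1$ for every prime $r$ and every nontrivial irreducible $\F_r G$-module $M$: indeed $\nu_2(\F_p)=1+\dim H^2(G,\F_p)-\dim H^1(G,\F_p)\ge 1$ by the final inequality of Lemma~\ref{schur}, so once all nontrivial modules give $\nu_2(M)\le 1$ the maximum in (\ref{prof ineq2}) is attained at a trivial module, which is exactly proficiency. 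Throughout I use that $H^0(G,M)=M^G=0$ for nontrivial irreducible $M$, so that $\nu_2(M)=\lceil(\dim H^2(G,M)-\dim H^1(G,M))/\dim M\rceil$.

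So fix a nontrivial irreducible $\F_r G$-module $M$ and split into cases according to the action of $Z:=Z(G)$. If $Z$ acts nontrivially on $M$, I would show $H^n(G,M)=0$ for all $n$. Choose $z\in Z$ acting as an element $\zeta\ne 1$ of the field $\End_{\F_r G}(M)$. Since $z$ is central, conjugation by $z$ is the identity automorphism of $G$, and the standard description of the $G$-action on cohomology then forces multiplication by $\zeta$ (the module action of $z$) to equal the identity on $H^n(G,M)$; thus $(\zeta-1)H^n(G,M)=0$, and as $\zeta-1$ is invertible we get $H^n(G,M)=0$. In this case $\nu_2(M)=0\le 1$.

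If instead $Z$ acts trivially on $M$, then $M$ is a nontrivial irreducible module for $G/Z\cong S$, and Lemma~\ref{covering} applies with $c$ the $r$-rank of $Z$, giving $\dim H^2(G,M)\le \dim H^2(S,M)+c\,\dim H^1(G,M)$. Combined with the hypothesis $\dim H^2(S,M)\le\dim M$ this yields
\[
\dim H^2(G,M)-\dim H^1(G,M)\le \dim M+(c-1)\dim H^1(G,M).
\]
Here the cyclicity of $M(S)$ does the essential work: it forces $c\le 1$, so $(c-1)\dim H^1(G,M)\le 0$ and hence $\dim H^2(G,M)-\dim H^1(G,M)\le\dim M$, i.e.\ $\nu_2(M)\le 1$. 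I expect this last case—specifically the bookkeeping that turns the $H^1$ term in our favor—to be the only delicate point; everything else is structural. I would close by noting that the estimate breaks down as soon as $M(S)$ has $r$-rank $\ge 2$, since then a positive multiple of $\dim H^1(G,M)$ survives and even a bound such as $\dim H^1\le\tfrac12\dim M$ from \cite{GH} would not suffice; thus the cyclic-multiplier hypothesis is genuinely what makes the argument go through.
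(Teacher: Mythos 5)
Your proof is correct and takes essentially the same route as the paper: the same dichotomy on whether $Z$ acts trivially on $M$ (with cohomology vanishing in the nontrivial case), and the same key estimate from Lemma~\ref{covering} where cyclicity of the multiplier forces $c\le 1$, verified against the criterion (\ref{prof ineq2}). The only cosmetic difference is that the paper first reduces to proving proficiency of $\tilde S$ itself via Lemma~\ref{covering2}, whereas you run the identical argument directly on each central quotient $\tilde S/Z_0$ after noting that its center $Z(\tilde S)/Z_0$ is still cyclic.
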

 
 \begin{proof}  
  By the previous lemma, it suffices to show that
$\tilde{S}$ is proficient.  Suppose that $M$ is an irreducible
$\F_p\tilde{S}$-module.  If $Z(\tilde{S})$ acts nontrivial
on $M$, then $H^j(\tilde{S}, M)=0$ for all $j \ge 0$ by
\cite[Corollary 3.12]{GKKL2}.  In particular, $\nu_2(M)=0$.  

Otherwise, we may also view $M$ as an $S$-module.
By Lemma \ref{covering} and the hypotheses, 
$\dim H^2(\tilde{S},M)-\dim H^1(\tilde{S},M)
\le \dim H^2(S,M) \le \dim M$ for any irreducible $\F_p\tilde{S}$-module $M$.
Thus $\tilde{S}$ is proficient   by    (\ref{prof ineq2})
 \end{proof}

\section{Cohomology of some alternating groups} 
\label{altsec}
In this section, we fix
a prime $p \equiv 3 \mod 4$  and consider   $A_{p+2}$ and $S_{p+2}$. 
We first improve a bound \cite[Theorem 6.2]{GKKL2} for $H^2$:

\begin{theorem}
Set $G=A_{p+2}$.  Let   $M$  be   a $kG$-module
for  a field  $k$  of characteristic $r$.
\begin{enumerate}
\item   $\dim H^2(G,M) \le \dim M,$ with
equality if and only if $r=2$ and $G$ acts trivially on $M$.
\item If $M$ is nontrivial and irreducible, then
$\dim H^2(G,M) \le \frac{p-1}{p+1}\dim M$.
\item Both $A_{p+2}$ and its double cover are proficient.
\end{enumerate}
\end{theorem}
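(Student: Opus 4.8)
The plan is to deduce the three parts in the order (2), (1), (3), since (1) rests on (2) and (3) rests on (1). For part (2) the odd-characteristic case is exactly \cite[Theorem 6.2]{GKKL2}, so the real work is characteristic $r=2$. Here I would exploit the hypothesis $p\equiv 3\mod 4$, which forces $(p-1)/2$ to be odd: consequently the amalgamated factors in Lemma~\ref{amalgam1}, namely $A\cong \Z/3\times C$ of order $3(p-1)/2$ and $T$ of order $p(p-1)/2$, both have \emph{odd} order, hence order prime to $2$. Writing $A_{p+2}\times T=X/N$ with $X=A *_C T$ and $N$ the normal closure of the single element $x^2$ as in Lemma~\ref{amalgam1}, Corollary~\ref{H2=0} gives $H^2(X,M)=0$ for every $kX$-module $M$ in characteristic $2$. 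Applying Lemma~\ref{quotient} with $s=1$ then yields $\dim H^2(A_{p+2}\times T,M)\le \dim M$.

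To pass from the product $A_{p+2}\times T$ back to $A_{p+2}$, I would view a nontrivial irreducible $A_{p+2}$-module $M$ as an $(A_{p+2}\times T)$-module with $T$ acting trivially. Since $|T|$ is odd, $H^j(T,k)=0$ for $j>0$, so the K\"unneth formula collapses to $H^2(A_{p+2}\times T,M)\cong H^2(A_{p+2},M)$. This already delivers $\dim H^2(A_{p+2},M)\le \dim M$ in characteristic $2$, which is the bound of Theorem~\ref{altcoho}. To reach the sharp constant $\tfrac{p-1}{p+1}$ of part (2) I would refine the estimate of Lemma~\ref{quotient}: the crude term $s\dim M=\dim M$ is in fact $\dim\hom_{A_{p+2}\times T}(N^{\mathrm{ab}},M)$, and since $N$ is normally generated by the single relator $x^2$, this relation module is cyclic, so by Frobenius reciprocity the $\hom$-space embeds into $M^{H}$ for $H$ the stabiliser of the image of $x^2$; the desired bound would then follow from a fixed-space estimate $\dim M^{H}\le \tfrac{p-1}{p+1}\dim M$ for nontrivial irreducibles, tied to the explicit permutation action of $T$ on $a^T$. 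I expect this fixed-space computation to be the main obstacle.

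For part (1) I would first reduce to irreducible $M$: the long exact cohomology sequence attached to a composition series gives $\dim H^2(G,M)\le \sum_i \dim H^2(G,S_i)$ over the composition factors $S_i$. For a nontrivial irreducible factor part (2) gives $\dim H^2(G,S_i)<\dim S_i$, while for the trivial factor $S_i=\F_r$ Lemma~\ref{schur} together with $M(A_{p+2})\cong \Z/2$ (note $p+2\notin\{6,7\}$, and $A_{p+2}$ is perfect, so $H^1=0$) gives $\dim H^2(G,\F_r)$ equal to $1$ if $r=2$ and $0$ if $r$ is odd. Hence $\dim M=\dim H^2(G,M)\le \sum_i\dim S_i=\dim M$, so both inequalities are equalities; in particular each factor attains its own bound, which happens only when $r=2$ and every $S_i$ is trivial, i.e.\ (since $A_{p+2}$ is perfect, a module with only trivial composition factors has trivial image in $\mathrm{GL}(M)$) exactly when $r=2$ and $G$ acts trivially on $M$. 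The converse is the direct computation $\dim H^2(G,\F_2^{\,m})=m$.

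Finally, part (3) is immediate: $A_{p+2}$ is perfect with trivial center and cyclic Schur multiplier $\Z/2$, so its universal cover is $2A_{p+2}$ and the only central quotients are $A_{p+2}$ and $2A_{p+2}$. Part (1) applied to irreducible modules verifies the hypothesis $\dim H^2(A_{p+2},M)\le\dim M$ of Corollary~\ref{cov-cyclic}, which then yields that both of these central quotients are proficient.
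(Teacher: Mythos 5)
Your characteristic-$2$ skeleton coincides with the paper's: realize the group as a quotient $X/N$ of the amalgam $X=A*_CT$ of Lemma~\ref{amalgam1} (both factors have odd order because $p\equiv 3\bmod 4$), kill $H^2(X,M)$ by Corollary~\ref{H2=0}, and bound $\dim H^2$ of the quotient by $\dim\hom(N/[N,N],M)\le\dim M$ via inflation--restriction; your K\"unneth reduction from $A_{p+2}\times T$ to $A_{p+2}$ is a harmless repackaging of what the paper does implicitly. But the step you defer --- the fixed-space estimate needed for the constant $\frac{p-1}{p+1}$ --- is exactly the missing idea, and it is not a computation about the permutation action of $T$ on $a^T$. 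The paper argues: the normal generator $x^2[N,N]$ of $N/[N,N]$ is centralized by $xN$, so $\hom_G(N/[N,N],M)$ embeds in the fixed space $C_M(x)$; then, for $p>3$, it invokes \cite[Lemma 6.1]{gursaxl}, by which $A_{p+2}$ is generated by $(p+1)/2$ conjugates $x_1,\dots,x_{(p+1)/2}$ of any nontrivial element, in particular of the image of $x$. For a nontrivial irreducible $M$ one has $\bigcap_i C_M(x_i)=M^G=0$, whence $\dim M\le\frac{p+1}{2}\,\mathrm{codim}\,C_M(x)$, i.e.\ $\dim C_M(x)\le\frac{p-1}{p+1}\dim M$; the case $p=3$ is checked by machine. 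Without this generation-by-few-conjugates input your argument proves only $\dim H^2(A_{p+2},M)\le\dim M$ in characteristic $2$, not part (2); and since your proof of the equality clause in part (1) requires strict inequality for nontrivial irreducible composition factors, that clause is left open as well. (Strictness, though not the constant, could have been salvaged from your own embedding into $M^H$: the image of $x$ lies in $H$ and acts nontrivially on any nontrivial irreducible $M$ because $A_{p+2}$ is simple, so $M^H\subseteq C_M(x)\subsetneq M$; you did not draw this consequence.)

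A secondary inaccuracy: in odd characteristic, \cite[Theorem 6.2]{GKKL2} yields only $\dim H^2(G,M)\le\dim M$ (the bound of Theorem~\ref{altcoho}), not the constant $\frac{p-1}{p+1}$ of part (2). The paper instead uses the quantitative bounds \cite[Theorems 6.4, 6.5]{GKKL2} --- $\dim H^2(G,M)\le(\dim M)/(r-2)$ for $r>3$, and $(21/25)\dim M$ for $r=3$ --- together with a separate treatment of the small cases $r=3$, $p\in\{3,7\}$. Your reductions for part (1) (composition series, plus perfectness forcing trivial action when all factors are trivial) and for part (3) (Corollary~\ref{cov-cyclic} applied to the universal cover $2A_{p+2}$) do match the paper's route and are sound, conditional on part (2).
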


\begin{proof}   If $r > 3$, then
$\dim H^2(G,M) \le (\dim M)/(r-2)
\le \frac{p-1}{p+1} \dim M$   by \cite[Theorem 6.4]{GKKL2}.  Suppose that $r=3$.  
Since $p+2 \ne 7$ or $8$,
$\dim H^2(G,M) \le (21/25) \dim M \le \frac{p-1}{p+1} \dim M$ unless
$p=3$ or $7$, by \cite[Theorem 6.5]{GKKL2}.  If $p=3$, by inspection  $\dim H^2(G,M) \le 1$
and the result holds.  If $p=7$, then
$\dim H^2(G,M) \le (3/5) \dim M$ by \cite[Theorem 6.5]{GKKL2},
 and the result  again  holds.

So assume that $r=2$.
Since the Schur multiplier has order 2,   we may also assume   that $M$ is
irreducible and nontrivial, so we need to  consider (2).

 Write $G = X/N$ with $A,T$   and $x^2$ as in   Lemma \ref{amalgam1}.
 Since $A$ and $T$ have odd order,  $ H^2(X,M)=0$  by Corollary~\ref{H2=0}.
By  Lemma \ref{infres}, we then have
  $\dim H^2(G,M) \le \dim \hom_G(N/[N,N],M)$.
  Since $N/[N,N]$ is a cyclic $G$-module, the last
term is at most $\dim M$.  However, since a (normal) generator  $x^2[N,N]$
 of $N/[N,N]$
is fixed by the nontrivial element $xN$,  we see that
$ \dim \hom_G(N/[N,N],M)$ is at most
 the dimension of the space $C_M(x)$  of   fixed points of $x$ on $M$.

If $p=3$, one can verify (2) using MAGMA.
 If $p > 3$,
 by \cite[Lemma 6.1]{gursaxl}   $G$ can be generated by
$(p+1)/2$ conjugates of any nontrivial element.
In particular, $G$ is generated by conjugates $x_1 , \dots,x_{(p+1)/2}$ of $x$.
Then $\dim M = {\rm codim\ } \cap_iC_M(x_i)\le \frac{p + 1}{2}  {\rm codim\ }  C_M(x)$, so that
$\dim C_M(x)\le  \dim M - \frac{2}{p + 1} (\dim M)=\frac{p-1}{p+1}\dim M$.

This proves (2) and hence (1), and (3) follows from Corollary \ref{cov-cyclic}.
\end{proof}

We have now proved Theorem~\ref{altp+2}(b).
We still need to prove Theorem~\ref{altp+2}(c),
for which we need more information concerning
 $A_{p+2}$.
We first record a special case of \cite[Theorem 1]{gurkim}.

\begin{lemma}  If $M$ is a  $kA_{p+2}$-module for any field $k,  $  then  
$\dim H^1(A_{p+2},M) \le (\dim M)/(p-1)$. 
\end{lemma}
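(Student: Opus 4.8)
The statement is exactly the specialization $n=p+2$ of a uniform bound of the shape $\dim H^1(A_n,M)\le (\dim M)/(n-3)$, so the shortest route is to invoke \cite[Theorem 1]{gurkim} with $n=p+2$, noting that $(p+2)-3=p-1$. If instead I wanted an argument tailored to this case, I would first reduce to the essential situation. Since $\dim H^1(A_{p+2},-)$ is subadditive on short exact sequences (from the long exact cohomology sequence) while $\dim(-)$ is additive, it suffices to prove the bound when $M$ is irreducible; and because $A_{p+2}$ is perfect, $H^1(A_{p+2},k)=\Hom(A_{p+2},k)=0$, so the trivial module is harmless and I may assume $M$ is a nontrivial irreducible $kA_{p+2}$-module. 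In that case $B^1$ has full dimension $\dim M$ (as $M^{A_{p+2}}=0$), so the task becomes the single estimate $\dim Z^1(A_{p+2},M)\le \tfrac{p}{p-1}\dim M$.

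For the main step I would compute $Z^1$ from Carmichael's presentation \eqn{carmichael} with $n=p$: the generators $x_1,\dots,x_p$ correspond to the $3$-cycles $(i,p+1,p+2)$, and a cocycle $f$ is determined by the tuple $(m_i)=(f(x_i))\in M^{\,p}$, subject to the linear conditions $(1+x_i+x_i^2)m_i=0$ (from $x_i^3=1$) and $(1+x_ix_j)(m_i+x_im_j)=0$ (from $(x_ix_j)^2=1$) for all $i\ne j$. The naive count gives only $\dim Z^1\le p\dim M$; the point is that the $\binom{p}{2}$ highly symmetric relations of the second type, together with the action of the large symmetry group permuting the indices and of a $p$-cycle of order $p$, cut this space down drastically. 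Concretely I would organize the relation module equivariantly for this symmetry and show that, modulo the coboundaries $m_i=(x_i-1)v$, the system leaves at most $\tfrac{1}{p-1}\dim M$ of freedom. An alternative bookkeeping uses Shapiro's lemma: writing $k^{p+2}=\operatorname{Ind}_{A_{p+1}}^{A_{p+2}}k$ and using $k^{p+2}\otimes M\cong \operatorname{Ind}_{A_{p+1}}^{A_{p+2}}M$ gives $H^1(A_{p+1},M)\cong H^1(A_{p+2},k^{p+2}\otimes M)$, and the augmentation sequence $0\to k\to k^{p+2}\to k^{p+2}/k\to 0$ then yields an inductive comparison of $H^1(A_{p+2},M)$ with the cohomology of point stabilizers, the transitivity on $p+2$ points feeding the growing denominator.

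The genuine obstacle is precisely extracting a denominator that grows \emph{linearly} in $p$: the elementary generator count yields a bound with coefficient only $O(1)$, and improving this to $1/(p-1)$ forces one to exploit both the abundance of relations and the detailed cross-characteristic representation theory of $A_{p+2}$, in particular controlling the small primes $2,3$ and the defining prime $p$ uniformly. This is exactly what \cite[Theorem 1]{gurkim} accomplishes across all characteristics at once, so for the present paper the cleanest course, and the one I would take, is to quote that theorem and specialize to $n=p+2$.
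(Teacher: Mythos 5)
Your proposal is correct and matches the paper exactly: the paper gives no independent argument but simply records this lemma as a special case of \cite[Theorem 1]{gurkim}, which is precisely your recommended route of specializing that theorem to $n=p+2$ so that $n-3=p-1$. The sketched alternative via Carmichael's presentation is not needed (and, as you note, would not easily yield the linear-in-$p$ denominator), so quoting Guralnick--Kimmerle is the right call.
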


The same bound holds for $kS_{p+2}$-modules
such that   $A_{p+2}$   has
 no fixed points -- for then $H^1(S_{p+2},M)$
embeds into $H^1(A_{p+2},M)$ (see \cite[Lemma 3.8(1)]{GKKL2}).

Combining the previous two  results gives:

\begin{lemma} \label{h2+h1}  Let $k$ be a field of characteristic
$r$. 
Let $M$ be a nontrivial irreducible $kA_{p+2}$-module.
Then
$$
\dim H^2(A_{p+2},M) + \dim H^1(A_{p+2},M) \le \frac{p^2 -p+2}{p^2-1}\dim M \le \dim M. 
$$
\end{lemma}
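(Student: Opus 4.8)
The plan is simply to add the two bounds established immediately above, both of which apply verbatim to a nontrivial irreducible $kA_{p+2}$-module $M$ over a field of characteristic $r$. For the $H^2$ term I would invoke part (2) of the theorem just proved, which gives
\[
\dim H^2(A_{p+2},M)\le \frac{p-1}{p+1}\dim M .
\]
For the $H^1$ term I would use the special case of \cite[Theorem 1]{gurkim} recorded in the preceding lemma, namely $\dim H^1(A_{p+2},M)\le \frac{1}{p-1}\dim M$. Summing these two estimates is the entire content of the argument.

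The only thing to verify is the arithmetic. Putting the two right-hand coefficients over the common denominator $(p+1)(p-1)=p^2-1$, the numerator becomes
\[
(p-1)^2+(p+1)=p^2-2p+1+p+1=p^2-p+2,
\]
which yields the first asserted inequality $\dim H^2(A_{p+2},M)+\dim H^1(A_{p+2},M)\le \frac{p^2-p+2}{p^2-1}\dim M$.

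For the final inequality $\frac{p^2-p+2}{p^2-1}\le 1$, I would clear the (positive) denominator: it is equivalent to $p^2-p+2\le p^2-1$, i.e.\ to $p\ge 3$, which holds since $p\equiv 3\bmod 4$ is prime; note that equality with $\dim M$ occurs exactly when $p=3$.

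I do not expect any real obstacle here: the lemma is a bookkeeping step combining two previously cited quantitative bounds, so the ``hard part'' was already done in the theorem on $H^2(A_{p+2},M)$ and in \cite{gurkim}. The only points requiring care are that both input bounds genuinely hold for the same nontrivial irreducible $M$ (so that adding them is legitimate) and that the fraction manipulation above is carried out without error.
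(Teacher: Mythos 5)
Your proposal is correct and is essentially identical to the paper's argument: the paper also obtains this lemma by simply adding the bound $\dim H^2(A_{p+2},M)\le \frac{p-1}{p+1}\dim M$ from part (2) of the preceding theorem to the bound $\dim H^1(A_{p+2},M)\le \frac{1}{p-1}\dim M$ from the preceding lemma. Your arithmetic combining the fractions and your observation that equality in the final inequality holds exactly when $p=3$ both match the paper (which remarks that the inequality is strict unless $p=3$).
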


Note that unless $p=3$, we have a strict inequality above.
 We can now prove:

\begin{theorem}  
\label{H2 estimate Alt Sym}
Let $k$ be a field of characteristic $r$. 
Let $G$ be either $S_{p+2}$ or  $2A_{p+2}$.
If $M$ is a  nontrivial  irreducible $kG$-module$,$  then
$\dim H^2(G,M) \le \dim M.$
In particular$,$   $S_{p+2}$ and  
$2A_{p+2}$ are proficient.  
\end{theorem}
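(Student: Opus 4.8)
The plan is to establish the displayed bound $\dim H^2(G,M)\le\dim M$ for every nontrivial irreducible $M$ and to deduce proficiency from it formally. Indeed, for such an $M$ one has $H^0(G,M)=M^G=0$, so if the bound holds then
$\nu_2(M)=\lceil(\dim H^2(G,M)-\dim H^1(G,M))/\dim M\rceil\le\lceil \dim H^2(G,M)/\dim M\rceil\le 1$, using $\dim H^1\ge 0$. Since $d(M(S_{p+2}))=1$, while $2A_{p+2}$ is the universal cover of $A_{p+2}$ (its multiplier is $\Z/2$, as $p+2\ne 6,7$) and hence superperfect with $d(M(2A_{p+2}))=0$, by (\ref{nuschur}) the value of $\nu_2$ at the trivial module equals $2$ for $S_{p+2}$ and $1$ for $2A_{p+2}$. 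In both cases the maximum of $\nu_2$ is attained at a trivial module, which by (\ref{prof ineq2}) is exactly proficiency. So everything reduces to the $H^2$ bound, which I would obtain by comparing $G$ with $A_{p+2}$ and feeding the result of Lemma~\ref{h2+h1}.

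For $G=S_{p+2}$ I would run the Lyndon--Hochschild--Serre spectral sequence $E_2^{i,j}=H^i(\Z/2,H^j(A_{p+2},M))\Rightarrow H^{i+j}(S_{p+2},M)$ attached to the normal subgroup $A_{p+2}$ of index $2$, giving $\dim H^2(S_{p+2},M)\le \dim E_2^{2,0}+\dim E_2^{1,1}+\dim E_2^{0,2}$. When $r\ne 2$ the order of $\Z/2$ is invertible, every $E_2^{i,j}$ with $i>0$ vanishes, the sequence collapses, and $\dim H^2(S_{p+2},M)\le\dim H^2(A_{p+2},M)\le\dim M$ by the preceding theorem on $A_{p+2}$ (equivalently, restriction to $A_{p+2}$ is injective; this recovers \cite[Theorem~6.2]{GKKL2}). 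When $r=2$, Clifford's theorem makes $M|_{A_{p+2}}$ semisimple, and it has no trivial constituent, since otherwise $M^{A_{p+2}}$ would be a nonzero $S_{p+2}$-submodule, forcing $A_{p+2}$ to act trivially and $M$ to be a (trivial) $\Z/2$-module. Hence $M^{A_{p+2}}=0$, so $E_2^{2,0}=H^2(\Z/2,M^{A_{p+2}})=0$; combined with $\dim H^1(\Z/2,W)\le\dim W$ for any $\Z/2$-module $W$, this yields $\dim H^2(S_{p+2},M)\le\dim H^2(A_{p+2},M)+\dim H^1(A_{p+2},M)$, and Lemma~\ref{h2+h1} applied to each nontrivial irreducible constituent of $M|_{A_{p+2}}$ bounds the right side by $\dim M$.

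For $G=2A_{p+2}$, set $Z=Z(G)\cong\Z/2$, so $G/Z\cong A_{p+2}$. If $Z$ acts nontrivially on $M$ (possible only when $r\ne 2$), its generator acts as the scalar $-1\ne 1$, and \cite[Corollary~3.12]{GKKL2} gives $H^j(G,M)=0$ for all $j$, so $\dim H^2(G,M)=0\le\dim M$. If $Z$ acts trivially, then $M$ is inflated from a nontrivial irreducible $A_{p+2}$-module, and Lemma~\ref{covering} gives $\dim H^2(G,M)\le\dim H^2(A_{p+2},M)+c\,\dim H^1(G,M)$ with $c$ the $r$-rank of $Z$. For $r\ne 2$ one has $c=0$ and the preceding theorem finishes it. For $r=2$ one has $c=1$, and the inflation--restriction sequence (Lemma~\ref{infres}) together with $\hom(Z,M)^{A_{p+2}}=\hom_{A_{p+2}}(Z,M^{A_{p+2}})=0$ (as $M^{A_{p+2}}=0$) identifies $H^1(G,M)$ with $H^1(A_{p+2},M)$, so again $\dim H^2(G,M)\le\dim H^2(A_{p+2},M)+\dim H^1(A_{p+2},M)\le\dim M$ by Lemma~\ref{h2+h1}.

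The main obstacle is the characteristic $2$ case: there the index $2$ (for $S_{p+2}$) and the center $Z$ (for $2A_{p+2}$) are not invertible, so the bound cannot simply be pulled back from $A_{p+2}$ by restriction, and both the spectral sequence and the covering estimate unavoidably contribute an $H^1(A_{p+2},M)$ term. Consequently the decisive ingredient is not the bare inequality $\dim H^2(A_{p+2},M)\le\dim M$ but the combined estimate $\dim H^2(A_{p+2},M)+\dim H^1(A_{p+2},M)\le\dim M$ of Lemma~\ref{h2+h1}; this is exactly where the new char-$2$ bound for $H^2(A_{p+2},-)$ and the $H^1$-bound of \cite{gurkim} are simultaneously needed. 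The one technical point to verify with care is the vanishing $M^{A_{p+2}}=0$ in the $r=2$ situation, which is what kills $E_2^{2,0}$ and the relevant $\hom$-space and thereby lets Lemma~\ref{h2+h1} close the argument.
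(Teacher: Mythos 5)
Your proof is correct and takes essentially the same route as the paper: reduce both groups to $A_{p+2}$, handle $r\ne 2$ via injectivity of restriction (resp.\ vanishing for nontrivial central action and Lemma~\ref{covering} with $c=0$), and handle $r=2$ via the inequality $\dim H^2(G,M)\le\dim H^2(A_{p+2},M)+\dim H^1(A_{p+2},M)$ combined with Lemma~\ref{h2+h1}. The differences are only presentational: you derive from the Lyndon--Hochschild--Serre spectral sequence what the paper cites as \cite[Lemma 2.8(2)]{GKKL2} and \cite[p.~91]{relmod}, and you verify proficiency directly from (\ref{prof ineq2}) (which is in fact cleaner for $S_{p+2}$, since Corollary~\ref{cov-cyclic}, invoked by the paper, is stated only for perfect groups).
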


\begin{proof}
First suppose that $r \ne 2$.
If $G = 2A_{p+2}$, it follows by \cite[Theorem 6.2]{GKKL2}
that $\dim H^2(G,M) < \dim M$ (noting that the trivial module
has trivial $H^2$).  
If   $G=S_{p+2}$, then $M$ restricted to $A_{p+2}$ is either 
irreducible or the direct sum of $2$  irreducible modules.  Since
$r$ is odd, $H^2(S_{p+2},M)$ embeds in $H^2(A_{p+2}, M)$ by
\cite[p.~91]{relmod},  and  
the result follows.

Now consider $r=2$.   First suppose that $G=S_{p+2}$ with
$M$ a nontrivial irreducible $S_{p+2}$-module.   Then $M$ is a direct
sum of nontrivial irreducible $A_{p+2}$-modules.
 By \cite[Lemma 2.8(2)]{GKKL2},$~\dim H^2(G,M) \le \dim H^2(A_{p+2},M)
+ \dim H^1(A_{p+2}, M)$. By Lemma \ref{h2+h1}, this implies that 
$\dim H^2(G,M) \le �  \dim M $.  

 Similarly, using  Lemma \ref{covering} and Lemma \ref{h2+h1},  
if  $G=2A_{p+2}$
then 
$$\dim H^2(G,M) \le \dim H^2(A_{p+2},M)
+ \dim H^1(A_{p+2}, M)  \le \dim M.$$
The result follows by  Corollary~ \ref{cov-cyclic}.
\end{proof}

  Moreover, any double cover of $S_{p+2}$  
  (which is nonsplit when restricted to $A_{p+2}$)   is proficient: 

\begin{theorem} \label{2Sn}
Let $X$ be a double cover of $S_{p+2}$ that
is nonsplit over $A_{p+2}$.  Then $X$  is proficient$:$   it has a profinite 
presentation with $2$ generators and $2$ relations. 
\end{theorem}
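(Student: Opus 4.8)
The plan is to verify the sharp cohomological criterion for proficiency, (\ref{rhat})--(\ref{prof ineq2}): I will show that $\nu_2(M)\le 1$ for every nontrivial irreducible module $M$ and that $M(X)=1$ (so that $\nu_2(\F_r)=1$ for every prime $r$ by (\ref{nuschur})); combined with $d(X)=2$, equation (\ref{rhat}) then forces $\hat r(X)-d(X)=\max_{r,M}\nu_2(M)-1=0$, i.e.\ $\hat r(X)=2$.

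First I would record the structure of $X$. Write $1\to Z\to X\xrightarrow{\pi} S_{p+2}\to 1$ with $Z=Z(X)\cong\mathbb{Z}/2$, and set $\tilde A:=\pi^{-1}(A_{p+2})$. By hypothesis the extension is nonsplit over $A_{p+2}$, so $\tilde A\cong 2A_{p+2}$; since $p+2\ne 6,7$ this is the universal cover of $A_{p+2}$, hence perfect with trivial Schur multiplier, giving $H^1(\tilde A,\F_r)=H^2(\tilde A,\F_r)=0$ for every prime $r$ (the first because $\tilde A$ is perfect, the second by Lemma \ref{schur}). Because $\tilde A$ is perfect it lies in $[X,X]$, while $\pi([X,X])=[S_{p+2},S_{p+2}]=A_{p+2}$ forces $[X,X]\le\tilde A$; hence $[X,X]=\tilde A$ and $X^{ab}\cong\mathbb{Z}/2$. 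Now $d(X)=2$ is immediate: lifting a pair of generators of $S_{p+2}$ to $a,b\in X$, the subgroup $\langle a,b\rangle$ surjects onto $S_{p+2}$ and so is not contained in the unique index-$2$ subgroup $\tilde A$ of $X$, whence $\langle a,b\rangle=X$.

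Next I would prove $M(X)=1$ using the complementary extension $1\to\tilde A\to X\to\mathbb{Z}/2\to 1$. Since the fiber terms $H^j(\tilde A,\F_r)$ vanish for $j=1,2$, the Lyndon--Hochschild--Serre spectral sequence (whose $E_2^{i,j}=H^i(\mathbb{Z}/2,H^j(\tilde A,\F_r))$ is concentrated in the row $j=0$ through total degree $2$) shows that inflation induces $H^i(X,\F_r)\cong H^i(\mathbb{Z}/2,\F_r)$ for $i\le 2$. Hence $\dim H^2(X,\F_r)=\dim H^1(X,\F_r)$ for every $r$ (both equal $1$ when $r=2$ and $0$ otherwise), so by Lemma \ref{schur} each Sylow subgroup of $M(X)$ has rank $0$; that is, $M(X)=1$. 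It then remains to bound nontrivial modules. Let $M$ be a nontrivial irreducible $\F_r X$-module, so $H^0(X,M)=0$ and $\nu_2(M)=\lceil(\dim H^2(X,M)-\dim H^1(X,M))/\dim M\rceil$. If $Z$ acts nontrivially on $M$ (which forces $r$ odd), then $H^j(X,M)=0$ for all $j$ by \cite[Corollary 3.12]{GKKL2}, so $\nu_2(M)=0$. Otherwise $Z$ acts trivially and $M$ is a nontrivial irreducible $S_{p+2}$-module; Lemma \ref{covering} applied to $G=X$ with center $Z$ (whose $r$-rank $c$ is $1$ if $r=2$ and $0$ otherwise) gives $\dim H^2(X,M)-\dim H^1(X,M)\le\dim H^2(S_{p+2},M)$ in both cases, and Theorem \ref{H2 estimate Alt Sym} bounds the right-hand side by $\dim M$, so $\nu_2(M)\le 1$. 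Combining the three steps, $\max_{r,M}\nu_2(M)=1$ is attained at a trivial module, $X$ is proficient, and $\hat r(X)=d(X)=2$.

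I expect the genuine obstacle to be the middle step, the verification $M(X)=1$ (equivalently $\dim H^2(X,\F_2)=1$). Everything there turns on identifying $[X,X]$ with the universal cover $2A_{p+2}$ and exploiting the vanishing of its mod-$r$ cohomology in degrees $1$ and $2$; without this collapse the fiber contribution to $H^2(X,\F_2)$ is hard to control, and indeed a nonzero $2$-rank of $M(X)$ would raise the relation count from $2$ to $3$. The nontrivial-module estimate and the generation count are routine once Theorem \ref{H2 estimate Alt Sym} and Lemma \ref{covering} are in hand.
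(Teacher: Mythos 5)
Your proof is correct and takes essentially the same route as the paper: both verify the criterion (\ref{prof ineq2}) by showing $\nu_2(M)\le 1$ for every irreducible module, using Lemma \ref{covering} together with Theorem \ref{H2 estimate Alt Sym} when $Z$ acts trivially, vanishing of all cohomology when $Z$ acts nontrivially (possible only in odd characteristic), and the superperfectness of $[X,X]\cong 2A_{p+2}$ for trivial coefficients --- your spectral-sequence collapse is exactly the content of the cited \cite[Lemma 3.11]{GKKL2}. The only cosmetic differences are that in odd characteristic the paper restricts to the index-$2$ subgroup $2A_{p+2}$ (where restriction on $H^2$ is injective) instead of arguing on $X$ directly, and that you make the verifications $d(X)=2$ and $M(X)=1$ explicit where the paper leaves them implicit.
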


\begin{proof} 
Let $Z=Z(X)$, and so $|Z|=2$.  Let $k$ be a field of characteristic
$r$ with $M$ a nontrivial irreducible $kX$-module.  If $r \ne 2$,
then the restriction map from $H^2(X,M)$ to $H^2(2A_{p+2},M)$
is injective by  \cite[p.~91]{relmod}.    Arguing as above,
we see that $\dim H^2(2A_{p+2},M)=0$ if $Z$ acts nontrivially
and that $\dim H^2((2A_{p+2},M) = \dim H^2(A_{p+2},M) \le \dim M$
if $Z$ acts trivially.   Thus, $\nu_2(M) \le 1$ for all such $M$.

If $r=2$,  then $Z$ is trivial on $M$. 
Then 
$\dim H^2(X,M) \le \dim H^2(S_{p+2},M) + \dim H^1(X,M)$
by Lemma \ref{covering}.
Thus, by Corollary~\ref{cov-cyclic} and definition, $\nu_2(M) \le 1$ for any nontrivial 
irreducible $kX$-module.

Suppose that $M =k$ is the trivial module. 
   We claim that $\dim H^2(X,k)=1$. 
     Since the derived subgroup  $Y$ 
   of $X$ is the universal
cover of $A_{p+2}$, we have $H^2(Y,k)=0;$  and 
since $Y$ is perfect, 
$H^1(Y,k)=0$.   By \cite[Lemma 3.11]{GKKL2},  
these imply the claim.  Clearly, $\dim H^1(X,k)=1$,
whence the result follows.  
   Thus, considering  all cases  we have $\max_{p,M}\nu_2(M)\le 1$,   and hence $\hat r_2(X)=2$
 by 
(\ref{prof ineq2}).%
\end{proof}

{\noindent  \em Completion of the proof of}  Theorem~\ref{altp+2}.    We have already proved
(a).  Parts (b) and (c) follow from the two previous results.   We now
prove (d), (e)  and (f).   Let $k$ be a field. 
 
(d)  By \cite[Theorems 7.2, 7.3]{GKKL2},
if  $q \ge 4$  then  $\dim H^2(G,M) \le \dim M$ for any
$k\SL(2,q)$-module  $M$ and so $\SL(2,q)$ is proficient
by Corollary~\ref{cov-cyclic}.

(e),(f)  Let $G=\PSL(3,q)$ with $q \equiv 1 \mod 3$ 
or $\PSL(4,q)$ with $q$ odd.  Then 
$G$ has a nontrivial Schur multiplier.  Thus,  by (\ref{rhat}) it suffices
to observe that $\dim H^2(G,M) \le 2 \dim M$ for
any irreducible $kG$-module $M$.  This is \cite[Theorem E]{GKKL2}.
\qed

\medskip
Finally, we show how our methods can be used to give
very good estimates on  some second cohomology groups:
we give a new and simpler proof of a result of Kleshchev and Premet
\cite{KP}.     
  

\begin{theorem} \label{heart}  Let $G=A_n, n > 4$.   Let $M$ be the nontrivial
irreducible composition factor of the permutation module   $P$
 of dimension $n$
over a field $k$ of characteristic $r$.
Assume that $n > 5$ if $r=5$ and $n > 9$ if $r=3$.
Then $H^2(G,M)=0$.
\end{theorem}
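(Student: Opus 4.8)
The plan is to realize $P$ as an induced module and apply Shapiro's lemma. Writing $\Omega=\{1,\dots,n\}$ and taking a point stabilizer $A_{n-1}\le A_n$, we have $P\cong \mathrm{Ind}_{A_{n-1}}^{A_n}k$, so the Eckmann--Shapiro lemma gives $H^i(A_n,P)\cong H^i(A_{n-1},k)$ for all $i$. This reduces the computation of $H^2(A_n,M)$ to the trivial-coefficient cohomology of $A_{n-1}$ (and, in the harder case, of $A_{n-2}$) together with the relevant connecting maps. The needed input is standard: since $A_m$ is perfect for $m\ge 5$ we have $H^1(A_m,\F_r)=0$, and by Lemma~\ref{schur} $\dim H^2(A_m,\F_r)$ equals the $r$-rank of $M(A_m)$, which is $\Z/2$ for $m\ge 5$ with $m\ne 6,7$, while $M(A_6)=M(A_7)=\Z/6$. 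The numerical hypotheses ($n>5$ when $r=5$ and $n>9$ when $r=3$) are imposed precisely to keep every alternating group that occurs, down to $A_{n-2}$, in this generic range, away from the exceptional multipliers of $A_6,A_7$ and from the degenerate small groups.

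First I would dispose of the split case $r\nmid n$. Here $P\cong k\oplus M$ as $\F_r A_n$-modules, with $M$ the $(n-1)$-dimensional heart, so $H^2(A_n,P)\cong H^2(A_n,k)\oplus H^2(A_n,M)$. Combined with Shapiro this yields $\dim H^2(A_n,M)=\dim H^2(A_{n-1},\F_r)-\dim H^2(A_n,\F_r)$. Under the hypotheses both $A_{n-1}$ and $A_n$ lie in the range where $\dim H^2(A_m,\F_r)$ is constant (equal to $1$ if $r=2$ and to $0$ if $r$ is odd), so the two dimensions agree and $H^2(A_n,M)=0$.

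The main work, and the main obstacle, is the non-split case $r\mid n$. Now the diagonal $D=k\cdot\mathbf{1}$ lies inside the augmentation submodule $P_0$, giving the filtration $0\subset D\subset P_0\subset P$ with $D\cong k$, $P/P_0\cong k$, and $M=P_0/D$ of dimension $n-2$. I would extract $H^2(A_n,M)$ from the two short exact sequences $0\to P_0\to P\to k\to 0$ and $0\to k\to P_0\to M\to 0$ via their long exact sequences in cohomology, feeding in $H^i(A_n,P)\cong H^i(A_{n-1},\F_r)$ together with the vanishing $H^1(A_n,\F_r)=H^1(A_{n-1},\F_r)=0$. The delicate points are the maps induced by the inclusion $k\hookrightarrow P$ and the augmentation $P\twoheadrightarrow k$, which under Shapiro become the restriction and corestriction (transfer) maps between $H^\ast(A_n,\F_r)$ and $H^\ast(A_{n-1},\F_r)$; and the fact that $M$ sits two steps into the filtration, so a priori $H^3(A_n,\F_r)$, equivalently the cohomology of a two-point stabilizer $A_{n-2}$, intervenes through a connecting homomorphism.

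Controlling these maps is exactly where the hypotheses on $n$ are used: I expect the crux to be showing that the connecting homomorphism into $H^3(A_n,\F_r)$ contributes nothing and that the relevant degree-$2$ transfer map is surjective. The bounds $n>9$ for $r=3$ and $n>5$ for $r=5$ guarantee that $A_{n-1}$ and $A_{n-2}$ have the generic Schur multiplier, so that the pertinent restriction maps are isomorphisms in this stable range; tracking these through the two long exact sequences then forces $H^2(A_n,M)=0$, completing all cases.
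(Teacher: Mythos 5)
Your treatment of the split case $r\nmid n$ is sound and is exactly the paper's argument (Shapiro's lemma, $P=k\oplus M$, and the Schur multiplier values of $A_m$). The genuine gap is in the case $r\mid n$, which is the heart of the theorem. Chasing your two long exact sequences reduces $H^2(A_n,M)=0$ to two assertions that you never prove: (a) the map $H^2(A_n,k)\to H^2(A_n,P_0)$ induced by $k\hookrightarrow P_0$ is surjective (under Shapiro this says restriction maps $H^2(A_n,k)$ onto $\ker\bigl(\mathrm{cor}\colon H^2(A_{n-1},k)\to H^2(A_n,k)\bigr)$), and (b) the connecting map $H^2(A_n,M)\to H^3(A_n,k)$ vanishes, i.e.\ $H^3(A_n,k)\to H^3(A_n,P_0)$ is injective. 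Assertion (a) can be rescued: for odd $r$ both groups are zero in the stated range, and for $r=2$ restriction is an isomorphism because the double cover of $A_n$ restricts to the nonsplit double cover of $A_{n-1}$ while $\mathrm{cor}\circ\mathrm{res}=n=0$. But (b) is a genuine statement about \emph{degree-three} cohomology of alternating groups: for odd $r\mid n$ it is precisely injectivity of $\mathrm{res}\colon H^3(A_n,\F_r)\to H^3(A_{n-1},\F_r)$, and for $r=2$ a comparably delicate degree-$3$ statement. These $H^3$ groups are generally nonzero (e.g.\ $H^*(A_5,\F_2)$ is the Dickson algebra $\F_2[d_2,d_3]$, so $H^3\neq 0$; $H^3(\Sigma_3,\F_3)\neq 0$), and nothing you invoke controls them: Lemma~\ref{schur} and the hypotheses $n>9$ ($r=3$), $n>5$ ($r=5$) govern only $H^{\le 2}$ with trivial coefficients, i.e.\ Schur multipliers, and the standard transfer argument fails because the index $[A_n:A_{n-1}]=n$ is divisible by $r$. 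Your claim that such maps are ``isomorphisms in this stable range'' is exactly the missing theorem, not a consequence of anything cited. Also, your parenthetical identification of $H^3(A_n,\F_r)$ with ``the cohomology of a two-point stabilizer $A_{n-2}$'' is false: Shapiro identifies $H^*(A_n,P)$ with $H^*(A_{n-1},k)$ because $P$ is induced, but the trivial module is not induced from $A_{n-2}$, so no such identification exists.

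For contrast, the paper's proof of the modular case never touches $H^3$ or the internal structure of $H^*(A_n,\F_r)$. It writes $A_n$, via Carmichael's presentation \eqref{carmichael}, as $X/R$ where $X=G_1 *_{G_{1,n}} G_n$ is the free amalgamated product of two point stabilizers (each $\cong A_{n-1}$) over $G_{1,n}\cong A_{n-2}$ and $R$ is the normal closure of a single element $w$. Lemma~\ref{amalgam} together with induction (the already-settled split case applied to $A_{n-1}$) gives $H^2(X,M)=0$; a derivation count in the sub-amalgam $Y$ shows $H^1(Y,M)=0$, hence $\hom_Y(S,M)=0$ and, since $w$ normally generates both $S$ and $R$, also $\hom_X(R,M)=0$; inflation-restriction (Lemma~\ref{infres}) then forces $H^2(A_n,M)=0$. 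If you wish to keep your filtration approach, you must supply the degree-$3$ input (for instance, injectivity of restriction on $H^3$ of alternating groups in the relevant range), which is a substantial result in its own right rather than a bookkeeping step.
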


\begin{proof}  We will need a variant of the  presentation    (\ref{carmichael}) for $G$.    Let $I = \{1, \ldots, n\}$.
If $J$ is a subset of $I$, let $G_J$ be  the  
subgroup which acts on $I \setminus{J}$ as the alternating group and is trivial
on $J$.

Let $X$ be the free amalgamated product of $G_1$ and $G_n$
over $G_{1,n}$.
Let $R$ be the normal subgroup of $X$ generated by the element $w:=(uv)^2$,
where $u = (1 ,3 , 4) \in G_{n}$ and $v = (3 , 4 , n) \in G_1$.  Let $\Omega$ be the set
of $3$-cycles of the form $(i ,3 , 4)$.    Note that $u,v \in \Omega$ and that every other
element of $\Omega$ is in $G_{1,n}$.  Then
$X/R \cong A_n$   by  (\ref{carmichael}).

If $n > 5$, 
let $Y$ be the   free amalgamated product of $G_{1,2}$ and $G_{2,n}$
over $G_{1,2,n}$.  We may view $Y$ as a subgroup of $X$.  Then
$w \in Y$,  and  the image of $Y$ in $X/R \cong A_n$ is $A_{n-1}$.
Let $S$ be the normal closure of $w$ in $Y$.  Again, by   (\ref{carmichael}),
$Y/S \cong A_{n-1}$.  Since $S \le R$ we have  $R \cap Y = S$.

First suppose that $r$ does not divide $n$.  
Then $P = k \oplus M$. 
By Shapiro's Lemma  (e.g., \cite[Lemma~3.3]{GKKL2}), 
$\dim H^2(G,P)=\dim H^2(A_{n-1},k)$. Thus,
$\dim H^2(G,M)= \dim H^2(A_{n-1},k) - \dim H^2(G,k)$.
If $r=2$, both of the latter  quantities are $1$.  
If $r > 3$, or if  $r=3$ and $n > 9$, then
both of those quantities are $0$ 
(since  $M(A_m)=\Z_2$  if $m=5$ or $m > 7$
and  $M(A_6)=M(A_7)=\Z_6$).
Thus, $H^2(G,M)=0$.

Now assume that $r|n$.  By our hypotheses, this implies that $n > 5$
(and $n \ge 12$ if $r=3$).  
We view $M$ as a $kX$-module with $R$ acting trivially.  Note
that $M$ restricted to $A_{n-1}$ is the nontrivial composition
factor of the permutation module for $A_{n-1}$.  Thus, by induction,
$H^2(A_{n-1},M)=0$.  Also, by Frobenius reciprocity,
$H^1(A_{n-1},M)=H^1(A_{n-2},k)=0$.
By the inflation restriction sequence,
$$
0 \rightarrow H^1(Y/S,M) \rightarrow H^1(Y,M) \rightarrow \hom_Y(S,M)
\rightarrow H^2(Y/S,M) \rightarrow H^2(Y,M).
$$ 
Since $Y/S= A_{n-1}$, we know  that
$H^i(Y/S,M)=0$ for $i=1,2$.   Thus, $H^1(Y,M) \cong \hom_Y(S,M)$.

We claim that $H^1(Y,M)=0$, and so also $\hom_Y(S,M)=0$. 
Let $D: = \mathrm{Der}(Y,M)$.  Let $f:D \rightarrow \mathrm{Der}(G_{1,2},M)$
be the restriction map.  Note that $M$ is the permutation module for $G_{1,2}
\cong A_{n-2}$.   Thus, $H^1(G_{1,2},M)=0$ and so any element of
$\mathrm{Der}(G_{1,2},M)$ is inner.  Since $G_{1,2}$ has a $1$-dimensional
fixed space on $M$, it follows that the image of $f$ has dimension $n-3$
(clearly the map is onto and the space of inner derivations for $H$ acting
on $M$ is isomorphic to $M/M^H$). 

Let $K = \mathrm{ker}(f)$.  Since $Y = \langle G_{1,2}, G_{2,n} \rangle$, the
restriction mapping $f_1: K \rightarrow \mathrm{Der}(G_{2,n},M)$
is injective.  As already noted, $\mathrm{Der}(G_{2,n},M)$ consists of inner
derivations.  Thus, the image of $f_1$ are those inner derivations of $G_{2,n}$
which vanish on $G_{1,2,n}$.   Since $M$ is the permutation module
for $G_{1,2}$, it follows that $G_{1,2}$ has a $1$-dimensional fixed
space and $G_{1,2,n}$ has a $2$-dimensional fixed space.  Thus,
the image of $f_1$ is $1$-dimensional.  Hence $\dim D = n-2$.
Since $Y$ acts irreducibly and nontrivially on $M$,  the space
of inner derivations of $Y$ on $M$ also has dimension $n-2$.
Thus, $\mathrm{Der}(Y,M)$ consists of inner derivations and so $H^1(Y,M)=0$,
as claimed.

Also by the inflation restriction sequence,  
$$
0 \rightarrow H^1(X/R,M) \rightarrow H^1(X,M) \rightarrow \hom_X(R,M)
\rightarrow H^2(X/R,M) \rightarrow H^2(X,M).
$$
Since $H^2(A_{n-1},M)=H^1(A_{n-2},M)=0$, it follows by Lemma \ref{amalgam} that
$H^2(X,M)=0$.   Thus, to complete the proof, it suffices to show
that $\hom_X(R,M)=0$. This follows since
  the restriction mapping   $\hom_X(R,M)\to \hom_Y(S,M)$ is injective 
(as $w \in S$ generates $R$ as a normal subgroup of $X$).
\end{proof}

It is straightforward to compute $H^2(G,M)$ in the cases omitted in 
the theorem. In fact, they are all $1$-dimensional except that
$H^2(A_7,M)=0$ in characteristic $3$.

\section{Non-proficient groups} \label{non}

There have been many constructions of non-proficient groups,
starting with Swan \cite{swan}.  See also \cite{GrK} and  \cite{kovacs}.
 
Let $V$ be a  nontrivial irreducible   $\F_pH$-module for  a finite perfect
group $H$. 
Let $W=V^e$ for some positive integer $e$,  so  that $W\semi H$ is perfect.
 Let $Y$ be the universal
cover of $WH$.  Then:

\begin{proposition}  If $e > \dim V, $  then $G:=Y \times Y$ is not proficient.
\end{proposition}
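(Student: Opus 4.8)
The plan is to show that $G = Y \times Y$ fails the proficiency criterion (\ref{prof ineq2}) by exhibiting a single nontrivial irreducible $\F_p G$-module $M$ for which $\nu_2(M)$ strictly exceeds $\max_p \nu_2(\F_p) = 1 + d(M(G))$. Since $Y$ is the universal cover of the perfect group $WH$, we have $H^1(Y,\F_p) = H^2(Y,\F_p) = 0$, so $Y$ is superperfect and $d(M(Y)) = 0$; by the K\"unneth formula $M(Y \times Y) = M(Y) \oplus M(Y) \oplus (Y^{ab} \otimes Y^{ab}) = 0$, so $d(M(G)) = 0$ and the right-hand side of (\ref{prof ineq2}) equals $1$. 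Thus it suffices to produce a nontrivial irreducible $\F_p G$-module $M$ with $\nu_2(M) \ge 2$, i.e. with $\dim H^2(G,M) - \dim H^1(G,M) + \dim H^0(G,M) > \dim M$.

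The natural candidate is $M = V \otimes V^*$ (or $V \boxtimes V^*$), viewed as an $\F_p G$-module where the first $Y$ acts through its quotient $WH \twoheadrightarrow H$ on the first tensor factor and the second $Y$ acts on the second. First I would use the K\"unneth formula as in the proof of Lemma \ref{3.3}: since exactly two of the tensor factors are nontrivial, $H^1(G, V \boxtimes V^*) = 0$ and
$$
H^2(G, V \boxtimes V^*) \cong H^1(Y, V) \otimes H^1(Y, V^*).
$$
The key computation is then to bound $\dim H^1(Y, V)$ from below. Here is where the hypothesis $e > \dim V$ and the choice of $W = V^e$ enter: using the inflation-restriction sequence (Lemma \ref{infres}) for the normal subgroup $W$ of $WH$ (and the fact that $Y$ covers $WH$ with $W$ acting trivially on the relevant modules), one relates $H^1(Y, V)$ to $\hom_H(W, V) = \hom_H(V^e, V)$, which has dimension $e \cdot \dim \End_H(V) \ge e$. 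The point is that $\dim H^1(Y, V) \ge e$, and likewise $\dim H^1(Y, V^*) \ge e$.

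Granting this, $\dim H^2(G, V \boxtimes V^*) \ge e^2$, while $\dim (V \boxtimes V^*) = (\dim V)^2$. Since $H^1$ vanishes and $H^0 \ge 0$, we get
$$
\nu_2(V \boxtimes V^*) \ge \left\lceil \frac{e^2}{(\dim V)^2} \right\rceil \ge 2
$$
precisely because $e > \dim V$ forces $e^2 > (\dim V)^2$. This violates (\ref{prof ineq2}), so $G$ is not proficient. I expect the main obstacle to be the lower bound $\dim H^1(Y, V) \ge e$: one must carefully track how the universal cover $Y$ contributes, verify that the map $H^1(Y,V) \to \hom_H(W,V)$ in inflation-restriction is surjective (equivalently that the relevant $H^2(H, V)$ term does not kill the image), and confirm that passing to $V \boxtimes V^*$ (rather than $V \otimes V$) is the right move to make $H^0$ contribute a nonzero trivial submodule via $\End_H(V)$, which only strengthens the inequality. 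The K\"unneth and dimension bookkeeping are routine once this cohomological lower bound is secured.
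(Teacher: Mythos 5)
Your global strategy matches the paper's proof: show $d(M(G))=0$ (so the right-hand side of (\ref{prof ineq2}) is $1$), then produce one nontrivial irreducible module $M$ with $\nu_2(M)\ge 2$ by combining the K\"unneth formula with a lower bound $\dim H^1(Y,\cdot)\ge e$ coming from inflation--restriction. However, your choice $M=V\otimes V^*$ (external tensor product) introduces a genuine error. The word ``likewise'' in ``likewise $\dim H^1(Y,V^*)\ge e$'' hides the fact that the same computation for $V^*$ produces $\hom_H(W,V^*)=\hom_H(V^e,V^*)$, whose dimension is $e\cdot\dim\hom_H(V,V^*)$, \emph{not} $e\cdot\dim\End_H(V)$; by Schur's lemma $\hom_H(V,V^*)=0$ unless $V$ is self-dual. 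The proposition assumes nothing about self-duality (and the solvable examples later in this same section of the paper explicitly take $V$ not self-dual, so this is not a vacuous worry). Thus if $V\not\cong V^*$ and, say, $p\nmid |H|$, then $H^1(Y,V^*)\cong H^1(H,V^*)=0$, hence by K\"unneth $H^2(G,V\otimes V^*)\cong H^1(Y,V)\otimes H^1(Y,V^*)=0$, and your candidate module detects nothing: the proof collapses. Your stated reason for preferring $V^*$ is also based on a misreading: for the external product, K\"unneth in degree zero gives $H^0(Y\times Y,\, V\otimes V^*)=V^Y\otimes (V^*)^Y=0$; the invariant ``identity endomorphism'' inside $V\otimes V^*$ exists only for the \emph{diagonal} action of a single group, which is not the module structure here.

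The repair is exactly the paper's choice $M=V\otimes V$ (both external factors equal to $V$, the $i$-th copy of $Y$ acting on the $i$-th factor through $Y\to WH\to H$). Then K\"unneth gives $H^1(G,M)=0$ and $H^2(G,M)\cong H^1(Y,V)\otimes H^1(Y,V)$, and your lower bound now applies to both factors: inflation embeds $H^1(WH,V)$ into $H^1(Y,V)$, and since $WH$ splits over $W$ the restriction map $H^1(WH,V)\to H^1(W,V)^H=\hom_H(W,V)$ is surjective (extend an $H$-equivariant $\phi\colon W\to V$ to the cocycle $wh\mapsto \phi(w)$, using that $W$ acts trivially on $V$), which settles the surjectivity issue you flagged without any hypothesis on $H^2(H,V)$. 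Hence $\dim H^1(Y,V)\ge \dim\hom_H(V^e,V)= e\cdot\dim\End_H(V)\ge e$, so $\dim H^2(G,M)\ge e^2>(\dim V)^2=\dim M$ and $\nu_2(M)\ge 2>1=1+d(M(G))$, contradicting (\ref{prof ineq2}).
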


\begin{proof}  Consider the  irreducible $\F_pG$-module   $M = V \otimes V$.
By the K\"unneth formula,  $\dim H^2(G,M) = e^2 > \dim M  $
and $H^1(G,M)=0$.   
Since $G$ is perfect and $Y$ has trivial Schur multiplier, so 
does $G$.  Thus, $\nu_2(M) > 1 = \nu_2(\F_r) $ for any prime $r$
and so $G$ is not proficient  by (\ref{prof ineq2}).
\end{proof}


Similarly:

\begin{proposition} Any finite group  $S$ is a direct
summand of a finite non-proficient group.
\end{proposition}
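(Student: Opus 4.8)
The plan is to reuse the construction of the preceding proposition and simply adjoin the given group $S$ as a direct factor, enlarging the parameter $e$ so as to overwhelm the Schur multiplier of $S$. Concretely, I would retain the notation $H, V, W=V^e, Y$ from above and set $G := S \times (Y\times Y)$, so that $S$ is visibly a direct factor of $G$; it then suffices to show that, for $e$ chosen large enough, $G$ is not proficient. I would test proficiency against the single module $M := k \boxtimes (V\otimes V)$, i.e. the external tensor product on which $S$ acts trivially and $Y\times Y$ acts as $V\otimes V$. By \cite[Lemma 3.2]{GKKL2} there is no loss in working over a splitting field $k$ of characteristic $p$, and $M$ is nontrivial and irreducible because $V$ is.

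The first step is a cohomology computation via the K\"unneth formula applied to $G = S \times (Y\times Y)$, feeding in the three facts recorded in the preceding proposition, namely $H^0(Y\times Y, V\otimes V) = H^1(Y\times Y, V\otimes V)=0$ and $\dim H^2(Y\times Y, V\otimes V) = e^2$. Since $S$ acts trivially on $M$, the only surviving K\"unneth summand in degree $2$ is $H^0(S,k)\otimes H^2(Y\times Y, V\otimes V)$, so $\dim H^2(G,M)=e^2$; and every summand contributing to $H^1(G,M)$ or $H^0(G,M)$ contains a factor $H^1(Y\times Y, V\otimes V)$ or $H^0(Y\times Y, V\otimes V)$ and hence vanishes, so $H^1(G,M)=H^0(G,M)=0$. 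As $\dim M = (\dim V)^2$, this yields $\nu_2(M) = \lceil e^2/(\dim V)^2\rceil$.

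The second step controls the right-hand side of the proficiency criterion (\ref{prof ineq2}), i.e. the rank of the Schur multiplier of $G$, and this is the one point where care is needed: the aim is to see that $d(M(G))$ does \emph{not} grow with $e$. Applying the K\"unneth formula for Schur multipliers to $G = S \times (Y\times Y)$, and using that $Y\times Y$ is perfect with trivial multiplier (as already established in the preceding proposition), both the cross term $S^{ab}\otimes (Y\times Y)^{ab}$ and the summand $M(Y\times Y)$ vanish, whence $M(G)\cong M(S)$ and $d(M(G)) = d(M(S))$ is a constant depending only on the given group $S$.

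Finally I would combine the two steps. Choosing any $e$ with $e^2 > (\dim V)^2\,(1 + d(M(S)))$ forces $\nu_2(M) > 1 + d(M(S)) = 1 + d(M(G)) = \max_p \nu_2(\F_p)$, the last equality being (\ref{nuschur}); so by (\ref{prof ineq2}) the group $G$ is not proficient, while $S$ is a direct factor of $G$ by construction. The main obstacle is precisely the bookkeeping of the second step: one must be certain that attaching $S$ does not inflate the multiplier rank, for otherwise the threshold $1 + d(M(G))$ could climb in step with $\nu_2(M)$ and defeat the argument. It is exactly the perfectness of the block $Y\times Y$ that pins $d(M(G))$ to the fixed value $d(M(S))$ and lets the free parameter $e$ prevail.
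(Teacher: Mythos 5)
Your proof is correct and is essentially the paper's own argument: the same group $S\times Y\times Y$, the same test module $V\otimes V$ with $S$ acting trivially, the K\"unneth formula to compute its cohomology, the identification $d(M(S\times Y\times Y))=d(M(S))$ via perfectness and triviality of the multiplier of $Y\times Y$, and then taking $e$ large. The one place you genuinely diverge is the K\"unneth bookkeeping, and your version is the more robust one: the paper kills the cross terms by asserting that $p$ does not divide $|S|$ --- which, since $S$ is arbitrary, is legitimate only because the prime $p$ in the construction can be chosen coprime to $|S|$, a choice the paper leaves implicit --- whereas you observe that every cross term contains a factor $H^0(Y\times Y,V\otimes V)$ or $H^1(Y\times Y,V\otimes V)$, both of which vanish by the preceding proposition, so no coprimality hypothesis is needed and the argument works for every prime $p$ and every $S$. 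You also use the correct threshold $\nu_2(M)>1+d(M(G))$ for violating (\ref{prof ineq2}); the paper's ``assume $e$ is sufficiently large so that $\nu_2(M)>d(M(S))$'' is off by one, though harmless since $e$ is a free parameter.
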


\begin{proof}   Let $G, M$ and $e$  be as in the previous proposition.  We may also
assume that $e$ is sufficiently large so that $\nu_2(M) >  d(M(S))$.  

Let $X:=S \times G$.  We may view $M$ as an $\F_pX$-module.  Since $p$ does not
divide the order of $S$, by the K\"unneth formula
$\dim H^i(X,M)=\dim H^i(G,M)$.  Thus, the computation of $\nu_2(M)$ is the same for
$X$  and $G$.   In particular, $\nu_2(M) > d(M(S))=d(M(X))$.  Thus, 
$X$ is not proficient.  
\end{proof}

{\noindent\bf  More examples, including non-proficient  solvable groups:}
We now  give additional examples of non-proficient
groups.   

We first compute $H^2$ for certain semidirect products.

\begin{lemma}  \label{calc}  Let $p$ be a prime.
Let $G$ be a finite group with a normal  elementary 
abelian $p$-subgroup $L$.  Assume that $G/L$ has order prime to $p$.
Let $r$ be a prime and $U$ be an irreducible $\F_rG$-module. 
\begin{enumerate}
\item  If $r=p$, then  $\dim H^2(G,U) = \dim \hom_G(L,U) + \dim \hom_G(\wedge^2(L),U)$.
\item  If $r \ne p$ and $U^L = 0$, then $H^j(G,U)=0$ for all $j \ge 0$.
\item  If $r \ne p$ and $U^L  \ne 0$, then $H^j(G,U) \cong H^j(G/L,U)$ for all $j \ge 0$.
\end{enumerate}
\end{lemma}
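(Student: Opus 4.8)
The plan is to run the Lyndon--Hochschild--Serre spectral sequence $E_2^{ij}=H^i(G/L,H^j(L,U))\Rightarrow H^{i+j}(G,U)$ for the extension $1\to L\to G\to G/L\to 1$ and to exploit the coprimality hypothesis to collapse it; the collapse occurs in different directions according to whether $r\ne p$ or $r=p$. Before doing so I would record a structural fact needed in (1) and (3): in those cases $L$ acts trivially on $U$. Indeed $U^L$ is a $G$-submodule since $L\lhd G$; if $r=p$ then $U^L\ne 0$ because a $p$-group acting on a nonzero $\F_p$-space has nonzero fixed points, while in (3) this is assumed; irreducibility of $U$ then forces $U^L=U$.

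For (2) and (3), where $r\ne p$: since $L$ is a $p$-group and $r\ne p$, the cohomology of $L$ with coefficients in the $\F_r$-module $U$ vanishes in positive degrees, so $H^j(L,U)=0$ for $j>0$ while $H^0(L,U)=U^L$. The spectral sequence is then concentrated in the row $j=0$, degenerates, and yields $H^n(G,U)\cong H^n(G/L,U^L)$ for all $n\ge 0$. In (2) this is $H^n(G/L,0)=0$; in (3), $U^L=U$ gives $H^n(G,U)\cong H^n(G/L,U)$.

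For (1), where $r=p$: now $|G/L|$ is prime to the coefficient characteristic, so $H^i(G/L,-)=0$ for $i>0$ and the spectral sequence instead collapses onto the column $i=0$, giving $H^2(G,U)\cong H^2(L,U)^{G/L}$. Since $L$ acts trivially on $U$, I would compute $H^2(L,U)$ by the universal coefficient theorem, inserting $H_1(L,\Z)=L$ and the Schur multiplier $H_2(L,\Z)=M(L)=\wedge^2 L$ of an elementary abelian group. As $U$ is an $\F_p$-module one has $\mathrm{Ext}^1_\Z(L,U)\cong \hom(L,U)$, and naturality of the universal coefficient sequence produces a short exact sequence of $\F_p[G/L]$-modules
$$
0\to \hom(L,U)\to H^2(L,U)\to \hom(\wedge^2 L,U)\to 0 .
$$
Because $|G/L|$ is prime to $p$, the category of $\F_p[G/L]$-modules is semisimple and the invariants functor $(-)^{G/L}$ is exact, so applying it gives $\dim H^2(L,U)^{G/L}=\dim\hom(L,U)^{G/L}+\dim\hom(\wedge^2 L,U)^{G/L}$; since $L$ acts trivially on $U$ these two invariant spaces are $\hom_G(L,U)$ and $\hom_G(\wedge^2 L,U)$, which is the asserted identity.

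The hard part will be checking that the universal coefficient sequence and the identification $\mathrm{Ext}^1_\Z(L,U)\cong\hom(L,U)$ are genuinely $G/L$-equivariant, with no Frobenius twist when $p=2$. Working over the prime field, a chain-level computation using the resolution $\Z\xrightarrow{p}\Z$ of $\Z/p$ shows that multiplication by a scalar $a$ acts on $\mathrm{Ext}^1_\Z(\Z/p,\Z/p)$ again as multiplication by $a$, not $a^2$, so the conjugation action of $G/L$ is transported faithfully. This is exactly what lets the single sequence above hold uniformly in $p$, rather than forcing a separate treatment of the polynomial cohomology ring in characteristic $2$ and the exterior-times-polynomial ring in odd characteristic.
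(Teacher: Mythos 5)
Your proof is correct, but your route through part (1) is genuinely different from the paper's. Where you collapse the Lyndon--Hochschild--Serre spectral sequence onto the column $i=0$ to obtain $H^2(G,U)\cong H^2(L,U)^{G/L}$, the paper avoids spectral sequences entirely: it writes $G=L\semi H$ by Schur--Zassenhaus, builds the universal nilpotent group $X$ of class $2$ on generators $x_i$ with $x_i^{p^2}=1$, identifies $Z(X)\cong \wedge^2(L)\oplus L$ as $H$-modules, and uses the universality of $X$ to realize each class in $H^2(L,U)^G$ as an explicit extension that lifts to a class in $H^2(G,U)$; this splits the restriction map, which is injective by coprimality, whence $H^2(G,U)\cong H^2(L,U)^G$. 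After that reduction the two arguments coincide in substance: the paper invokes its corrected Lemma~\ref{3.16-2}(1), which is exactly your universal-coefficient sequence $0\to \mathrm{Ext}_{\Z}(L,U)\to H^2(L,U)\to \hom(\wedge^2 L,U)\to 0$ of $G$-modules, and then takes fixed points using coprimality --- your Maschke step; your check that $\mathrm{Ext}^1_{\Z}(L,U)\cong\hom(L,U)$ carries no twist is precisely what makes that sequence one of $G$-modules, so it was the right point to scrutinize (the paper leaves it, and the fact that $L$ acts trivially on $U$ in cases (1) and (3), implicit). For parts (2) and (3) the paper simply cites \cite[Corollary 3.12]{GKKL2}, while you reprove that result by the row-collapse of the same spectral sequence; the content is identical. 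What each approach buys: yours is shorter, uses only standard homological machinery, and yields $H^n(G,U)\cong H^n(L,U)^{G/L}$ in all degrees at once; the paper's construction is extension-theoretic and elementary, exhibiting concretely how an invariant class on $L$ extends to $G$, which is also the mechanism underlying Lemma~\ref{3.16-2}.
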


\begin{proof}   Note that $G=L\semi H$ for some subgroup $H$, by the Schur-Zassenhaus Theorem. 
 
 First assume  that $r=p$.
  Let $w_1, \ldots, w_d$
be a basis for $L$.   Let $X$ be the universal nilpotent group of class $2$ generated
by  elements  $x_1, \ldots, x_d$  satisfying $x_i^{p^2}=1$.   Since
$H$ has order prime to $p$, $H$ acts naturally on $X$
so as to make 
$x_i \to w_i $  induce an $H$-equivariant map.
Note that
$\wedge^2(L) \cong [X,X] \le Y:=Z(X)$ (as $H$-modules)�.  
If $X_1: = \langle x_1^p, \ldots, x_d^p \rangle
\le Y$,  then $X_1 \cong L$ as $H$-modules.   
Clearly, $X/X_1[X,X] \cong L$ and so $Y=[X,X] \times X_1$.  In particular,
$Y \cong \wedge^2(L) \oplus L$ as $H$-modules. 
 
Consider any element of $H^2(L,U)^G$.  By the universality of $X$,
this corresponds to an extension 
$1 \rightarrow Y/M \rightarrow X/M \rightarrow L \rightarrow 1$
with $M$ an $H$-invariant subgroup of $Y$ with $Y/M \cong U$
as $H$-modules.  Clearly this lifts to an element of
$H^2(G,U)$, giving  a map   $H^2(L,U)^G \rightarrow H^2(G,U)$.
Composing with the restriction map gives the identity on $H^2(L,U)^G$.
Since $p$ does not divide $|G/L|$,  restriction is an injection  and so
$H^2(L,U)^G \cong H^2(G,U)$.

Since $G/L$ has order prime to $p$,  taking fixed points in
(i) of Lemma~\ref{3.16-2} gives $ H^2(L,U)^G = \hom_H(L,U) + \hom_H(\wedge^2(L),U)$,
and (1) follows. 

Finally, if  $r \ne p$, then (2) and (3) are  \cite[Corollary 3.12]{GKKL2}.
\end{proof}


 Let $H$ be a finite
group, and let $V$ be an irreducible $\F_pH$-module for some prime $p$
not dividing the order of $H$.  Let $W=V^e$ and set $G=W\semi H$.  
Assume that  $V$ is not self-dual and $\dim V = s > 1$.     Let
$U$ be an irreducible $\F_pH$-module that is a homomorphic image
of $\wedge^2(V)$.   Since $V$ is not self dual,  $U$ is nontrivial.

By Lemma \ref{calc},  
$\dim H^2(G,U) = \dim \hom_H(V^e,U) + \dim \hom_H(\wedge^2(V^e),U)$.
Also, by Lemma \ref{h2-h1 nontrivial}, $\dim H^1(G,U) = \dim \hom_H(V^e,U)$.

Thus, $\dim H^2(G,U) - \dim H^1(G,U) = \dim \hom_H(\wedge^2(V^e),U)$.
Since $U$ is a homomorphic image of $\wedge^2(V)$, its multiplicity
as a composition factor in $\wedge^2(V^e) = \wedge^2(V)^e + (V \otimes V)^{e(e-1)/2}$
is at least $e(e+1)/2$.   Thus, 
$$
\nu_2(U) \ge \frac{e(e+1)}{2d}  > \Big(\frac{e}{s} \Big)^2,
$$
where $d = \dim U$.  
 
Since $\F_p$ is not an image of either $V$ or $\wedge^2(V)$, it follows
by Lemma \ref{calc} that $H^2(G,\F_p)=0$.  Thus $p$ does not divide
$M(H)$ or $M(G)$. 
Also, by Lemma \ref{calc}, if $r \ne p$ then  $H^i(G, \F_r)=H^i(H,\F_r)$
 for all $i$. 
Hence, by Lemma \ref{schur}, $d(M(G))=d(M(H))$.
Since $G$ is not proficient as long as $\nu_2(U) > d(M(H)) + 1$,
we see that $G$ is not proficient for $e$ sufficiently large. 
In particular, if $H$ has a trivial Schur multiplier,
then $G$ is not proficient as long as $e(e+1) > 2d$.  

Note that $G$ is solvable if and only if $H$ is solvable.

We can  be a bit more precise.  The argument above shows that
$$ 
\hat{r}(G) - d(G) = \max\{\hat{r}(H)- d(H), \nu_2(U)-1  \}.
$$
where $U$ ranges over all $\F_pH$ composition factors of $ V \otimes V$.

One can also compute $d(G)$ easily.  If $s' = \dim_E V$, where 
$E$ is the field $\mathrm{End}_{\F_pH}(V)$, then  
  \cite[Corollary 2]{AG} implies
that  
$$
d(G) =  \max \{ d(H), 2 + \lfloor \frac{e-1}{s'} \rfloor \}.
$$

\smallskip
\smallskip

{\noindent   
\bf Corrections:} 
Finally,  we take this opportunity to correct
two minor errors in \cite{GKKL2} pointed out to us by Serre.

The first is    \cite[Lemma 3.11]{GKKL2} (and as restated in
\cite[Lemma 3.12(i)]{GKKL2}), which
we quoted incorrectly from  \cite{Ba}.
 The correct hypothesis is that
$H^i(N,M)=0$ for $0 < i < r$, which always held whenever
the result was applied.

The second is   \cite[Lemma 3.16]{GKKL2}, the correct version of which is:

\begin{lemma} \label{3.16-2}  Let $G$ be a finite group with a normal  
abelian $p$-subgroup $L$.  
Let $L[p]$ denote the $p$-torsion subgroup of $L$.
Let $V$ be an irreducible $\F_pG$-module. 
\begin{enumerate}
\item There is an exact sequence of $G$-modules$,$
$$
0 \rightarrow \mathrm{Ext}_{\mathbb{Z}}(L,V) \rightarrow H^2(L,V) \rightarrow 
\wedge^2( L^*) \otimes V \rightarrow 0.
$$
\item 
$\dim H^2(L,V)^G \le \dim (L[p]^* \otimes V)^G 
+ \dim_F (\wedge^2 (L/pL)^* \otimes V)^G$.
\item If $G=L,$  then $\dim H^2(G,\F_p) =  d(d+1)/2$ where $d=d(G)$.
\end{enumerate}
\end{lemma}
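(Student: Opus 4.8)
The plan is to recognize this as the universal coefficient computation for the cohomology of the abelian group $L$, made $G$-equivariant by naturality. The first thing I would establish is that $L$ acts trivially on $V$: since $L \unlhd G$ is a $p$-group and $V$ is an irreducible $\F_pG$-module in characteristic $p$, the fixed space $V^L$ is a nonzero $G$-submodule, hence equals $V$. Thus $V$ is a trivial $L$-module and I may apply the universal coefficient theorem \cite{Br} for the abelian group $L$ with trivial coefficients:
$$0 \to \mathrm{Ext}^1_\Z(H_1(L,\Z),V) \to H^2(L,V) \to \hom_\Z(H_2(L,\Z),V) \to 0.$$
To obtain part (1), I substitute the classical identifications $H_1(L,\Z) \cong L$ and $H_2(L,\Z) \cong \wedge^2 L$ (the Schur multiplier of an abelian group), and then rewrite $\hom_\Z(\wedge^2 L, V) \cong \wedge^2(L^*) \otimes V$, using that $pV=0$ forces every homomorphism out of the finite $p$-group $\wedge^2 L$ to factor through $(\wedge^2 L)/p \cong \wedge^2(L/pL)$. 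Every arrow here is natural in $L$; since $G$ acts on $L$ by conjugation and trivially on $V$, the whole sequence is $G$-equivariant, which is exactly what (1) asserts.

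For part (2), I apply the left-exact fixed-point functor $(-)^G$ to the sequence in (1). Left exactness gives $\dim H^2(L,V)^G \le \dim (\mathrm{Ext}_\Z(L,V))^G + \dim(\wedge^2(L^*)\otimes V)^G$, so it remains to identify $\mathrm{Ext}^1_\Z(L,V)$ with $L[p]^* \otimes V$ as $G$-modules. Because $pV=0$, I would derive this from a $\Z$-dual description: applying $\hom_\Z(L,-)$ to $0 \to \Z \xrightarrow{p}\Z \to \Z/p \to 0$ and using $\hom_\Z(L,\Z)=0$, $\mathrm{Ext}^2_\Z(L,\Z)=0$ and $\mathrm{Ext}^1_\Z(L,\Z) \cong L^\vee$ yields $\mathrm{Ext}^1_\Z(L,\Z/p) \cong L^\vee/pL^\vee \cong L[p]^*$ naturally, and tensoring with $V$ gives the general case. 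Replacing $\wedge^2(L^*)$ by $\wedge^2((L/pL)^*)$ is only a change of notation ($L^* = \hom_\Z(L,\F_p) = (L/pL)^*$), so (2) follows.

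Part (3) is then a dimension count. Taking $G=L$ and $V=\F_p$, the functor $(-)^G$ is the identity, and part (1) gives $\dim H^2(L,\F_p) = \dim \mathrm{Ext}_\Z(L,\F_p) + \dim \wedge^2(L^*)$. Writing $d=d(L)$ for the rank, I have $\dim \mathrm{Ext}_\Z(L,\F_p) = \dim L[p]^* = d$ and $\dim \wedge^2(L^*) = \binom{d}{2}$, whence $\dim H^2(L,\F_p) = d + \binom{d}{2} = d(d+1)/2$; as a sanity check this agrees with Lemma \ref{schur}, since $d(M(L)) = \rank\,\wedge^2 L = \binom{d}{2}$ while $\dim H^1(L,\F_p)=d$. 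The step I expect to require the most care is the $G$-equivariance of the identifications in parts (1)--(2) --- in particular pinning down $H_2(L,\Z)\cong\wedge^2 L$ and $\mathrm{Ext}^1_\Z(L,V)\cong L[p]^*\otimes V$ as $G$-modules (not merely as vector spaces), including the torsion bookkeeping when $L$ is not elementary abelian; once naturality is in hand the remaining computations are routine.
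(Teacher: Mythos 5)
The paper never actually proves this lemma: it appears only in the ``Corrections'' section as the corrected form of \cite[Lemma 3.16]{GKKL2}, with the remark that the only change is in (2) ($L[p]$ replacing $L/pL$) and that the proofs in \cite{GKKL2} are unaffected. So there is no in-paper argument to compare against line by line; your universal-coefficient route ($H_1(L,\Z)\cong L$, $H_2(L,\Z)\cong\wedge^2 L$, then the UCT sequence) is the standard one and is in substance the argument that lemma rests on. In particular, your computation $\mathrm{Ext}^1_{\Z}(L,\F_p)\cong L^\vee/pL^\vee\cong L[p]^*$ (naturally, hence as $G$-modules) is precisely the content of the correction: $L[p]^*$ and $(L/pL)^*$ are isomorphic as vector spaces but in general \emph{not} as $G$-modules when $L$ is not elementary abelian, and your derivation lands on the correct one. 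Your reduction of (2) to left-exactness of $(-)^G$ and the dimension count in (3), including the consistency check against Lemma \ref{schur}, are all correct.

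One point in your write-up is wrong as stated and should be repaired, and it sits exactly at the spot you yourself flag as delicate. You justify the $G$-equivariance in (1) by asserting that ``$G$ acts on $L$ by conjugation and trivially on $V$.'' What you proved is that $L$ acts trivially on $V$; the whole group $G$ need not act trivially on $V$ (and in the application, Lemma \ref{calc}, it does not --- there $V=U$ is a nontrivial irreducible $\F_rG$-module). This matters because the fixed-point spaces $(L[p]^*\otimes V)^G$ and $(\wedge^2(L/pL)^*\otimes V)^G$ in (2) are taken for the simultaneous (diagonal) action of $G$ on both tensor factors, so equivariance cannot be reduced to naturality in $L$ alone. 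The fix is immediate: the universal coefficient sequence, and likewise your identifications $\hom_{\Z}(\wedge^2L,V)\cong\wedge^2(L^*)\otimes V$ and $\mathrm{Ext}_{\Z}(L,V)\cong L[p]^*\otimes V$, are natural in the coefficient module as well as in the group, hence commute with the pair (conjugation by $g$ on $L$, action of $g$ on $V$) for each $g\in G$. With that one sentence corrected, your proof is complete.
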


The only change is (2), where $L[p]$ replaces $L/pL$.  Again, this has
no effect on the proofs in \cite{GKKL2}.

\smallskip
\smallskip

{\noindent   
\bf Acknowledgments:} We thank J.-P. Serre for pointing out the errors just discussed, 
and  A. Hulpke for his assistance with
Proposition~\ref{doublexT}. We also thank the referee for his careful reading
and useful comments.

\end{document}